\documentclass[11pt]{article}

\usepackage[dvipdf]{graphicx}
\usepackage[latin1]{inputenc}
\usepackage{latexsym}
\usepackage{amsmath,amssymb,amsfonts,amsthm}
\usepackage{xcolor}
\usepackage{mathrsfs}
\usepackage[colorlinks=true,citecolor=red,linkcolor=blue,urlcolor=blue,pdfstartview=FitH]{hyperref}

\newcommand{\nrn}{\rightarrow+\infty}
\newcommand{\xrn}{\xrightarrow}

\newcommand{\ER}{\mathbb {R}}
\newcommand{\R}{\mathbb {R}}
\newcommand{\EN}{\mathbb {N}}

\newcommand{\PE}{\mathbb {P}}
\newcommand{\ES}{\mathbb{E}}

\newcommand{\psg}{( }
\newcommand{\psd}{) }

\newcommand{\cfk}{\mathbf{(C(f,k))}}
\newcommand{\cqfd}{\Box}

\textheight = 23 cm 
\textwidth = 15 cm 
\footskip = 0,5 cm
\topmargin = 0 cm 
\headheight = 0 cm 
\headsep =0 cm 
\oddsidemargin= 0 cm 
\evensidemargin = -0.3 cm 
\marginparwidth = 0 cm
\marginparsep = 0 cm \topskip = 0 cm
\newtheorem{theorem}{\textnormal{\bf{T\scriptsize{HEOREM}}}}
\newtheorem{prop}{\textnormal{\bf{P\scriptsize{ROPOSITION}}}}
\newtheorem{Corollaire}{\textnormal{\bf{C\scriptsize{OROLLARY}}}}
\newtheorem{lemme}{\textnormal{\bf{L\scriptsize{EMMA}}}}

\theoremstyle{definition}
\newtheorem{definition}{\textnormal{\bf{D}\scriptsize{EFINITION}}}
\theoremstyle{remark}

\newtheorem{Remarque}{\textnormal{\bf{R\scriptsize{EMARK}}}}

\author{Vincent Lemaire\footnote{Laboratoire de Probabilit\'es et Mod\`eles Al\'eatoires, UMR 7599, UPMC, Case 188, 4 pl. Jussieu,
F-75252 Paris Cedex 5, France, E-mail: \texttt{vincent.lemaire@upmc.fr}}, Gilles Pag\`es\footnote{ Laboratoire de Probabilit\'es et Mod\`eles Al\'eatoires, UMR 7599, UPMC, Case 188, 4 pl. Jussieu,
F-75252 Paris Cedex 5, France, E-mail: \texttt{gilles.pages@upmc.fr}},
Fabien Panloup\footnote{Institut de Math\'ematiques de Toulouse, Universit\'e Paul Sabatier \& INSA Toulouse, 135, av. de Rangueil, F-31077 Toulouse Cedex 4, France, E-mail: \texttt{fabien.panloup@math.univ-toulouse.fr}}
}

\title{Invariant distribution of duplicated diffusions and application to Richardson-Romberg extrapolation}
\date{November 25, 2013}
\begin{document}
\maketitle

\begin{abstract}
With a view to numerical applications we address the following question: given an ergodic Brownian diffusion with a unique invariant distribution, what are the invariant distributions of the duplicated system consisting of two trajectories? We mainly focus on the interesting case where the two trajectories are driven by the same Brownian path. Under this assumption, we first show that uniqueness of the invariant distribution (weak confluence) of the duplicated system  is essentially always true in the one-dimensional case. In the multidimensional case, we begin by exhibiting explicit counter-examples. Then, we provide  a series of  weak confluence criterions (of integral type) and also of \textit{a.s. pathwise confluence}, depending on the drift and diffusion coefficients through a  \textit{non-infinitesimal Lyapunov exponent}. {As examples, we apply our criterions to some non-trivially confluent settings such as classes of gradient systems with non-convex potentials or diffusions where the confluence is generated by the diffusive component. {We finally establish that the weak confluence property is connected  with an optimal transport problem}.}

As a main  application, we apply our results to the optimization of the Richardson-Romberg extrapolation for the numerical approximation of the invariant measure of the initial ergodic Brownian diffusion.
\end{abstract}

\bigskip
\noindent \textit{Keywords}: Invariant measure~; Ergodic diffusion~; Two-point motion~;  Lyapunov exponent~; Asymptotic flatness~; Confluence~; Gradient System~; Central Limit Theorem~; Euler scheme~;  Richardson-Romberg extrapolation~; Hypoellipticity;~Optimal transport.

\medskip
\noindent \textit{AMS classification (2000)}: 60G10, 60J60, 65C05, 60F05.
\
\section{Introduction and motivations}
When one discretizes a {stochastic (or not) differential equation (SDE)} by an Euler scheme with step $h$, 
a classical method to reduce the discretization error is the so-called Richardson-Romberg ($RR$) extrapolation {introduced in~\cite{TATU} for diffusion processes}. Roughly speaking, the idea of this method is to introduce a second Euler scheme with step $h/2$ and to choose an appropriate linear  combination of the two  schemes   to cancel the first-order discretization error. Such an idea can be adapted to the long-time setting. More precisely,
when one tries to approximate the invariant distribution of a diffusion by empirical measures based on an Euler scheme (with decreasing step) of the diffusion,
it is also possible to implement the same strategy by introducing a second Euler scheme with half-step {(see~\cite{lemairethese})}. In fact,  tackling    the rate of convergence of such a  procedure   involving a couple of Euler schemes of the same {SDE}  leads to studying the long run behaviour of  the underlying   couple of continuous processes that we will call  \textit{duplicated diffusion}{. When the two solutions only differ by the starting value and are driven by the same Brownian motion, the resulting coupled process is }also known as {\em $2$-point motion} {(terminology coming from the more general theory of stochastic flows, see~\cite{BAX,CAR,KUN})}.
Before being more specific as concerns this motivation, let us now define {precisely what we call} a duplicated diffusion.
 
\noindent Consider the following Brownian diffusion solution to the stochastic differential equation
\begin{equation}\label{EDS}
	(SDE) \quad \equiv \quad dX_t=b(X_t)dt+\sigma(X_t)dW_t,\;X_0=x\!\in \ER^d,
\end{equation}
where $b:\ER^d\to \ER^d$ and $\sigma:\ER^d\to {\cal M}(d,q,\ER)$ ($d\times q$ matrices with real valued entries) are locally Lipschitz continuous with linear growth and $W$ is a  standard $q$-dimensional Brownian motion defined on a filtered probability space $(\Omega,{\cal A}, \PE,({\cal F}_t)_{t\ge0})$ (satisfying the usual conditions). This stochastic differential equation ($SDE$)
has a unique strong solution denoted $X^{x}=(X^{x}_t)_{t\ge 0}$. Let  $\rho\!\in {\cal M}(q,q,\ER)$ be a square matrix with transpose $\rho^*$ such that $I_q-\rho^*\rho$ is non-negative as a symmetric matrix. We consider a filtered probability space, still denoted $(\Omega,{\cal A}, \PE,({\cal F}_t)_{t\ge0})$ on which is defined a $2q$-dimensional standard $({\cal F}_t)$-Brownian motion denoted $(W,\widetilde W)$ so that $W$ and $\widetilde W$ are two {\em independent} $q$-dimensional standard $({\cal F}_t)$-Brownian motions. Then we define   $W^{(\rho)}$ a third    standard $q$-dimensional $({\cal F}_t)$-Brownian motions  by
\begin{equation}\label{eq:wwrho}
W^{(\rho)}= \rho^*W + \sqrt{I_q- \rho^*\rho} \,\widetilde W,
\end{equation}
which clearly satisfies
\[
\langle W^i,W^{(\rho),j}\rangle_t =\rho_{ij}\,t,\; t\ge 0
\]
(the square root should be understood in the set of symmetric non-negative matrices). The duplicated diffusion or ``duplicated stochastic differential system" ($DSDS$) is then defined by
\begin{equation}\label{eds_duplicated}
	(DSDS) \quad \equiv \quad \begin{cases}dX_t=b(X_t)dt+\sigma(X_t)\,dW_t,&X_0=x_1\!\in \ER^d,\\
dX^{(\rho)}_t =b(X^{(\rho)}_t)dt+\sigma(X^{(\rho)}_t)\,dW^{(\rho)}_t,&X^{(\rho)}_0=x_2\!\in \ER^d.
\end{cases}
\end{equation} 
Under the previous assumptions on $b$ and $\sigma$, \eqref{eds_duplicated} has a unique (strong) solution. 
Then both $(X^{x}_t)_{t\ge 0}$ and $(X^{x_1}_t, X^{(\rho),x_2}_t)_{t\ge 0}$ are homogeneous Markov processes with transition (Feller) semi-groups, denoted $(P_t(x,dy))_{t\ge 0}$ and 
$\big(Q_t^{(\rho)}((x_1,x_2),(dy_1,dy_2))\big)_{t\ge 0}$ respectively, and defined on test Borel functions $f:\ER^d \to \ER$ and $g:\ER^{d}\times\ER^{d} \to \R$, by
\[
P_t(f)(x)=\ES f(X^{x}_t)\quad \mbox{ and }\quad Q_t^{(\rho)}(g)(x_1,x_2) = \ES\, g(X^{x_1}_t,X^{(\rho),x_2}_t).
\]

  We will   assume throughout the paper  that the original diffusion $X^{x}$ has an unique invariant distribution denoted $\nu$ $i.e.$ satisfying $\nu P_t=\nu$ for every $t\!\in \ER_+$. 
The first part of the   paper is devoted  to determining what are the  invariant measures of  $(Q_t^{(\rho)})_{t\ge0}$ (if any) depending on the correlation matrix $\rho$. Thus, if $\rho=0$, it is clear that   $\nu\otimes \nu$ is invariant for $Q^{(0)}$ and if $\rho = I_q$ so is  $\nu_{\Delta} = \nu \circ (x\mapsto (x,x))^{-1}$, but are they the only ones?  To be more precise, we want to establish  easily verifiable criterions on $b$ and $\sigma$ which ensure that $\nu_{\Delta}$ is the {\em unique} invariant distribution of $(DSDS)$.  In the sequel, we will denote by $\mu$ a generic invariant measure of $Q^{(\rho)}$. Now, we present the problem in more details (including references to the literature).

\smallskip
\noindent $\rhd$ {\em Existence of an invariant distribution for $(Q^{(\rho)}_t)_{t\ge 0}$.} First, the family of probability measures $(\mu_t^{(\rho)})_{t>0}$ defined on $(\ER^d\times\ER^d, {\cal B}or(\ER^d)^{\otimes2})$ by
\begin{equation}\label{loiind1}
	\mu_t^{(\rho)} = \frac 1t \int_0^t \nu^{\otimes 2} (dx_1,dx_2) Q_s^{(\rho)}((x_1,x_2),(dy_1,dy_2)) ds 
\end{equation}
is tight since both  its marginals on $\ER^d$ are equal to $\nu$. Furthermore, the semi-group $(Q^{(\rho)}_t)_{t\ge 0}$ being  Feller, one easily shows that any of its limiting distributions $\mu^{(\rho)}$ as $t\to \infty$ is an invariant distribution for $(Q^{(\rho)}_t)_{t\ge 0}$ such that  $\mu^{(\rho)}(dx\times \R^d) = \mu^{(\rho)}(\R^d\times dx)= \nu(dx)$.  Also note that, if uniqueness fails and $(P_t)_{t\ge 0}$ has two distinct invariant distributions $\nu$ and $\nu'$, a straightforward adaptation of the above (sketch of) proof shows that $(Q^{(\rho)}_t)_{t\ge 0}$ has (at least) an invariant distribution with marginals $(\nu,\nu')$ and another with $(\nu',\nu)$ as marginals.

\smallskip
\noindent $\rhd$ {\em Uniqueness of the invariant distribution of $(Q^{(\rho)}_t)_{t\ge 0}$.} It is clear that in full generality the couple $(X,X^{(\rho)})$ may admit several invariant distributions even if $X$ has only one such distribution. So is the case when  $\sigma\equiv 0$ if the flow $\Phi(x,t)$ of the $ODE \equiv \dot x =b(x)$ has $0$ as a unique repulsive equilibrium and a unique  invariant distribution $\nu$ on $\R^d\setminus \{0\}$. Then both distributions $\nu^{\otimes 2}$ and $\nu_{\Delta}$ (defined as above)  on $(\ER^d\setminus\{0\})^2$ are invariant and if $\nu$ is not reduced to a Dirac mass (think $e.g.$ to a $2$-dimensional ODE with a limit cycle around $0$) $(DSDS)$ has at least two invariant distributions.

\smallskip In the case ($\sigma\not \equiv 0$) the situation is more involved and depends on the correlation structure $\rho$ between the two Brownian motions $W$ and $W^{(\rho)}$. 
The diffusion matrix $\Sigma(X_t^{x_1},X_t^{(\rho), x_2})$ of the couple $(X^{x_1},X^{(\rho), x_2})$ at time $t>0$  is  given by any continuous 
 solution  to the equation 
\[
\Sigma(\xi_1,\xi_2)\Sigma(\xi_1,\xi_2)^*= \left [\begin{array}{cc}\sigma\sigma^*(\xi_1 )&  \sigma(\xi_1 )\rho  \sigma^*(\xi_2) \\
\sigma(\xi_2) \rho^*\sigma^*(\xi_1)   &     \sigma \sigma^*(\xi_2)    \end{array}\right]
\]
($e.g.$ the square root in the symmetric non-negative matrices or the Choleski transform\dots).

\smallskip
First, note that if  $I_q-\rho^*\rho$ is {\em positive definite} as a symmetric matrix, it is straightforward that ellipticity or uniform ellipticity of $\sigma\sigma^*$(when $q\ge d$)  for $X^{x}$  is transferred to $\Sigma(X_t^{x_1},X_t^{(\rho), x_2})\Sigma(X_t^{x_1},X_t^{(\rho), x_2})^*$ for the couple $(X^{x_1},X^{(\rho), x_2})$. Now, uniform ellipticity, combined with  standard regularity and growth/boundedness  assumption on the coefficients $b$, $\sigma$ and their partial derivatives, classically implies the existence  for every $t>0$  of a (strictly) positive probability density  $p_t(x, y)$ for $X_t^x$. These additional conditions are automatically satisfied by  the ``duplicated coefficients" of $(DSDS)$. At this stage, it is classical background that  any homogeneous Markov process whose transition has a (strictly) positive density for every $t>0$  has at most one invariant distribution (if any).  Consequently, under these standard assumptions on $b$ and $\sigma$ which ensure uniqueness of the invariant distribution  $\nu$ for $X$, we get uniqueness for  the ``duplicated" diffusion process $(X,X^{(\rho)})$  as well. 

The {\em hypo-elliptic}  case also implies the existence of a density for $X^x_t$ and the uniqueness of the invariant distribution under {\em controllability assumptions} on a companion differential system of  the  SDE. {This property} can  also be transferred to $(DSDS)$, although the proof becomes significantly less straightforward {than above} (see Appendix~B for a precise statement and  a detailed proof). 

We now consider one of the main problems of this paper: the degenerate case $\rho=I_q$. This corresponds to $W^{(\rho)}=W$ so that $X^{(\rho),x_2}= X^{x_2}$,  $i.e.$ $(DSDS)$ is the equation of the $2$-point motion in the sense of \cite{KUN} section 4.2 and \cite{Harris}. This $2$-point motion has been extensively investigated (see~\cite{CAR}) from an ergodic viewpoint, especially when the underlying diffusion, or more generally the stochastic flow $\Phi(\omega, x,t)$ lives on a (smooth) compact Riemannian manifold $M$. When this flow is smooth enough in $x$, the long run behaviour of such a flow (under its steady regime) can be classified  owing to its Lyapunov spectrum. For what we are concerned with, this classification is based on the top Lyapunov exponent defined by
\[
\lambda_{1} := \limsup_{t\to +\infty} \frac 1t \log\|D_x\Phi(x,t)\|
\] 
{where $\|D_x\Phi(x,t)\|$ denotes the  operator norm of the differential (tangent) of the flow. In this compact setting and when the top Lyapunov exponent is positive,  the long run behaviour of the two-point on $M^2\backslash\Delta$ has been deeply investigated in \cite{BAX} (see also \cite{dolgopyat} for further results in this direction). Such an assumption implies that $\Delta$ is somewhat repulsive.

	Here, we are in fact concerned with the opposite case. Our aim is to identify natural assumptions under which the invariant distribution of the $2$-point motion is unique (hence equal to $\nu_{\Delta}$). It seems clear that these conditions  should    in some 
sense imply that the paths  cluster asymptotically {either in a pathwise or in a statistical sense}. 
When  $\lambda_{1}<0$, a local form of such a clustering  has been obtained  
in~\cite{CAR} (see Proposition~2.3.3) : it is shown that  at a given point
``asymptotic clustering'' holds  with an arbitrarily high probability, 
provided  the starting points are close enough. However, this result seems to 
be not sufficient to imply uniqueness of the invariant distribution for the 
two-point motion and is still in a compact setting.

\smallskip 
In Sections~\ref{sec: d=1} and~\ref{sec:dge2}, we provide precise answers under verifiable conditions
on the coefficients $b$ and $\sigma$ of the original $\ER^d$-valued diffusion, not assumed to be smooth. More precisely, we show in Section~\ref{sec: d=1}
that in the one-dimensional case, uniqueness of $\nu_\Delta$ is almost always true (as soon as $(SDE)$ has a unique invariant distribution) and that under some slightly more constraining conditions, the diffusion is \textit{pathwise confluent} ($i.e.$  pathwise asymptotic clustering holds). This second result slightly extends by a different method a result by Has'minskii in~\cite{Has'm}.\\
 Section~\ref{sec:dge2}  is devoted to the multidimensional framework. We first provide a simple counter-example where uniqueness of $\nu_\Delta$ does not hold. Then, we obtain some sharp criterions for uniqueness. We begin by a general uniqueness result (for $\nu_\Delta$)
 (Theorem \ref{thm:echellemultidim}) involving in an Euclidean framework (induced by a positive definite matrix $S$ and its norm $|\,.\,|_{_S}$) a \textit{pseudo-scale} function $f_\theta$ designed from  a non-negative continuous function $\theta:\ER_+\rightarrow\ER$. Basically both uniqueness and pathwise confluence  follow from 

 conditions involving the coefficients of the diffusion $b$ and $\sigma$,  $S$ and $\theta$, combined with  a requested  behavior of the pseudo-scale function  at $0^+$. The  main ingredient of the proof is Birkhoff's ergodic Theorem applied to the one-dimensional It\^o process $f_\theta(|X_t^{x_1}-X_t^{x_2}|^2_{_S})$. Using additional martingale arguments, we also establish that the asymptotic pathwise confluence holds under  slightly more stringent conditions.\\ 
Then, in Subsection~\ref{subsec:NILS}, we draw a series of corollaries of Theorem~\ref{thm:echellemultidim} (illustrated on few examples) which  highlight easily verifiable conditions.
To this end we introduce  a function $\Lambda_{_S}:\ER^{d}\times\ER^{d} \setminus\Delta_{\ER^{d}\times\ER^{d} }\rightarrow\ER$ called  {\em Non-Infinitesimal $S$-Lyapunov} (NILS) exponent  
 defined  for every  $x,\, y\!\in \R^d, \; x\neq y$ by
\begin{equation}\label{defNILS}
	\hskip -0.5 cm\Lambda_{_S}(x, y) = \frac{(b(x)-b(y)|x-y)_{_S}}{|x-y|^2_{_S}}	+  \frac{1}{2}\frac{\|\sigma(x)-\sigma(y)\|^2_{_S}}{|x-y|^2_{_S}} - \Biggl(
	\Big| \frac{(\sigma^*(x)-\sigma^*(y))S(x-y)}{|x-y|^2_{_S}}\Big|^2\Biggr).
\end{equation} 

In particular we show (see Corollary~\ref{Cor:lyap}) that if, {for every probability measure $m$ on $^c \Delta_{\ER^{d}\times\ER^{d} }$ such that $m(dx\times\ER^d)=m(\ER^d\times dy)=\nu$,  
$$
\int_{\R^d\times\R^d} \Lambda_{_S}(x,y) m(dx,dy)<0
$$ 
then $\nu_{\Delta}$ is unique and if furthermore $\Lambda_{_S}\le -c_0<0$ on a  uniform stripe around the diagonal $\Delta_{\ER^{d}\times\ER^{d} }$, then pathwise confluence holds true.}
Moreover, under a  \textit{directional ellipticity} condition on $\sigma$, we show that the negativity  of $\Lambda_{_S}$ ({at least in an integrated sense}) can be localized near the diagonal (see Subsection~\ref{subsec:NILS}   for details).   A differential version of the criterion is established when $b$ and $\sigma$ are smooth (see Corollary~\ref{Cor:lypadiff}).

Note that these criterions obtained in the case $\rho = I_q$ can be  extended to 
the (last) case $\rho^* \rho = I_q$  using that $W^{(\rho)}= \rho W$ is still a standard B.M. (think to $\rho=-1$ when $d=1$). For the sake of simplicity (and since it is of little interest for the practical implementation of 
the Richardson-Romberg extrapolation), we will not consider this case in the paper.

{Then, we give some examples and provide an application to gradient systems ($b=-\nabla U$ and a constant $\sigma$ function)}. In particular, we obtain that our criterions can be applied to some situations where the potential is not convex. More precisely, we prove that for a large class of non-convex potentials,  super-quadratic at infinity , the $2$-point motion is weakly confluent if the diffusive component $\sigma$ is sufficiently large. {Furthermore, in the particular case $U(x)=(|x|^2-1)^2$, we prove that the result is true for every $\sigma>0$.}

{We end the first part of the paper by a connection with optimal transport. More precisely, we show that, up to a slight strengthening of the condition on the \textit{Integrated} NILS, the weak confluence property can be connected with an optimal transport problem.}

\smallskip {The second part of the paper (Section 4) is devoted to a first attempt in a long run  ergodic setting to combine the Richardson-Romberg extrapolation with a control of the variance of this procedure (see~\cite{PAG} in a finite horizon framework). To this end we consider two Euler schemes with decreasing steps $\gamma_n$ and $\tilde \gamma_n$ satisfying $\tilde \gamma_{2n-1}=\tilde\gamma_{2n}= \gamma_n/2$ and $\rho$-correlated Brownian motion increments. We show that the optimal efficiency of the Richardson-Romberg extrapolation in this framework  is obtained when $\rho=I_q$, at least when  the above uniqueness problem for $\nu_{\Delta}$ is satisfied. To support this claim we establish a Central Limit Theorem whose variance is  analyzed as a function of $\rho$.}

\medskip
\noindent{\sc Notations.} $\bullet$ $|x|=\sqrt{xx^*}$ denotes the canonical Euclidean norm of $x\!\in\ER^d$ ($x^*$ transpose of the column vector $x$).

\noindent $\bullet$   $\|A\|= \sqrt{{\rm Tr}(AA^*)}$  if $A\!\in {\cal M}(d,q,\ER)$ and $A^*$ is the transpose of $A$ (which is but   the canonical Euclidean norm on $\ER^{d^2}$).

\noindent $\bullet$  $\Delta_{\ER^{d}\times\ER^{d} }= \{(x,x),\; x\!\in \ER^d\}$ denotes the diagonal of $\ER^{d}\times\ER^{d} $.

\noindent $\bullet$ ${\cal S}(d,\R)= \{S\!\in  {\cal M}(d,d,\ER),\; S^*\!=\!S\}$, ${\cal S}^+(d,\R)$ the subset of ${\cal S}(d,\R)$ of non-negative matrices, ${\cal S}^{++}(d,\R)$ denotes  the subset of positive definite such matrices and  $\sqrt{S}$ denotes the unique square root of $S\!\in {\cal S}^+(d,\R)$ in ${\cal S}^{++}(d,\ER)$ (which commutes with $S$). $x\otimes y= x y^*=[x_iy_j]\!\in {\cal M}(d,d,\ER)$, $x,\,y\!\in \ER^d$.

\noindent $\bullet$  If $S\in{\cal S}^{++}(d,\ER)$, we denote by $(\,.\,|\,.\,)_{_S}$ and by $|\,.\,|_{_S}$, the induced inner product and norm on $\ER^d$, defined   by $(x|y)_{_S}=(x|Sy)$ and $|x|_{_S}^2=(x|x)_{_S}$ respectively.  Finally, for $A\!\in {\cal M}(d,d,\ER)$, we set
$\| A\|_{_S}^2={\rm Tr}(A^*SA)$.

\noindent $\bullet$  $\displaystyle \mu_n \stackrel{(\ER^d)}{\Longrightarrow}\mu$ denotes the weak convergence of the sequence $(\mu_n)_{n\ge 1}$  of probability measures defined on $(\ER^d, {\cal B}or(\ER^d))$ toward the probability measure $\mu$. ${\cal P}(X,{\cal A})$ denotes the set of probability distributions on $(X,{\cal A})$.

\noindent $\bullet$ For every function $f:\ER^d\to \ER$, define the Lipschitz coefficient of $f$ by  $[f]_{\rm Lip}=\sup_{x\neq y}\frac{|f(x)-f(y)|}{|x-y|}\le +\infty$.


\section{The one-dimensional case}\label{sec: d=1}
We first show that, in the one-dimensional case $d=q=1$,  uniqueness of $\nu$ implies that $\nu_\Delta$, as defined in the introduction, is the unique invariant distribution  of the duplicated diffusion.  The main theorem of this section is Theorem \ref{theoremhasm} which consists of two claims. The first one establishes   this uniqueness claim  using some ergodic-type arguments. Note that we do not require that $\sigma$ never vanishes. The second claim is an asymptotic pathwise confluence property  {for the diffusion} in its own scale, established under some slightly more stringent assumptions involving the scale function $p$, see below. This second result, under slightly less general assumptions,  is originally due to  Has'minskii (see~\cite{Has'm}, Appendix to the English edition, Theorem 2.2, p.308). It is revisited here by different techniques, mainly comparison results for one dimensional diffusions and ergodic arguments. Note that uniqueness of $\nu_\Delta$ can always be retrieved from asymptotic confluence (see Remark \ref{remhasm}).

Before stating the result, let us recall some definitions. We denote by  $M$ the {\em speed measure} of the diffusion classically defined by $\displaystyle M(d\xi)={(\sigma^2p')^{-1}(\xi)}d\xi$, where $p$ is the {\em scale function} defined (up to a constant) by 
\[
p(x) = \int_{x_0}^x d\xi e^{-\int_{x_0}^\xi \frac{2b}{\sigma^2}(u)du}, \; x\!\in \ER.
\]
Obviously, we will consider $p$ only when it makes sense {\em as a finite function} (so is the case if $b/ \sigma^{2}$ is locally integrable on the real line). 
We are now in position  to state the result.
\begin{theorem}\label{theoremhasm} Assume that  $b$ and $\sigma$ are continuous functions on $\ER$ being such that 
strong existence, pathwise uniqueness and the Feller Markov property hold for $(SDE)$ from any $x\!\in \ER$.
Assume furthermore that  there exists $\lambda:\ER_+\rightarrow\ER_+$, strictly increasing,  with $\lambda(0)=0$ and $\int_{0^+}\lambda(u)^{-2} du=+\infty$ such that for all $x,y\in\ER$,
$|\sigma(y)-\sigma(x)|\le \lambda(|x-y|)$.
 Then, the following claims hold true.

\medskip
\noindent  $(a)$ If $(X_t)_{t\ge0}$ admits a unique invariant distribution $\nu$, the distribution $\nu_{\Delta}= \nu\circ (\xi\mapsto (\xi,\xi))^{-1}$ is the unique invariant measure of the duplicated diffusion $(X^{x_1}_t,X^{x_2}_t)_{t\ge 0}$.

\medskip
\noindent $(b)$ (Has'minskii) Assume that { the scale function $p$ is well-defined as a real function on the real line} and that,
$$
\lim_{x\to \pm \infty}p(x)=\pm \infty\quad\mbox{ and }\quad M\mbox{ is finite}.
$$
Then,   $\nu=M/M(\R)$ is the unique invariant distribution of $(X_t)_{t\ge 0}$ and  $(p(X_t))_{t\ge0}$ is pathwise confluent: $\PE$-$a.s.$,  for every $x_1, x_2\! \in \ER$, $p(X^{x_1}_t) - p(X^{x_2}_t)$ tends to 0 when $t\to+\infty$.
\end{theorem}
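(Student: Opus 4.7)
The plan is to treat (a) and (b) separately, relying throughout on the one-dimensional comparison theorem for SDEs driven by the same Brownian motion: if $x_1\ge x_2$, then $X^{x_1}_t\ge X^{x_2}_t$ for all $t\ge 0$, a.s.

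For part (a), let $\mu$ be any invariant distribution of the duplicated semigroup $(Q_t^{(1)})_{t\ge 0}$. By the above comparison, the three sets $\{x_1>x_2\}$, $\Delta_{\ER\times\ER}$ and $\{x_1<x_2\}$ are absorbing for the semigroup, so $\mu$ decomposes as a convex combination of invariant measures concentrated on each of them. The equality of the marginals of $\mu$ with $\nu$ (recalled in the introduction) together with uniqueness of $\nu$ for $(P_t)$ force the $\Delta$-component to be a multiple of $\nu_\Delta$; by symmetry, it remains to rule out any non-zero invariant measure carried by $\{x_1>x_2\}$. Assume then for contradiction that $\mu$ is invariant with $\mu(\{x_1>x_2\})=1$, so that $Z_t:=X^{x_1}_t-X^{x_2}_t>0$ for all $t\ge 0$, $\mu$-a.s., and both marginal processes have law $\nu$.

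Applying It\^o's formula to $\log Z_t$ yields
$$\log Z_t = \log Z_0 + \int_0^t\!\left[\frac{b(X^{x_1}_s)-b(X^{x_2}_s)}{Z_s} - \tfrac12\!\left(\frac{\sigma(X^{x_1}_s)-\sigma(X^{x_2}_s)}{Z_s}\right)^{\!2}\right]\!ds + N_t,$$
with $N_t$ a continuous local martingale. Under stationary $\mu$, $\log Z_t$ is itself stationary, hence $\frac{1}{t}\log Z_t\to 0$ a.s.; dividing by $t$ and passing to the limit via Birkhoff's theorem (on an ergodic component of $\mu$) should yield the balance
$$\ES_\mu\!\left[\frac{b(X^{x_1}_0)-b(X^{x_2}_0)}{Z_0}\right] = \tfrac12\,\ES_\mu\!\left[\!\left(\frac{\sigma(X^{x_1}_0)-\sigma(X^{x_2}_0)}{Z_0}\right)^{\!2}\right]\ge 0,$$
the Yamada-Watanabe condition $\int_{0^+}\lambda(u)^{-2}\,du = \infty$ serving to ensure that these $1/Z_0$-weighted quantities are integrable under $\mu$ (the integrand of the second expectation is bounded by $(\lambda(Z_0)/Z_0)^2$, which is kept under control by the $\lambda$-condition combined with the positive recurrence of the marginal $\nu$). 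The contradiction is then extracted by a symmetrization in $(x_1,x_2)$, which flips the sign of the first expectation without affecting the second, together with pathwise uniqueness of $(SDE)$: the balance forces $\sigma(X^{x_1}_0)=\sigma(X^{x_2}_0)$ $\mu$-a.s., and pathwise uniqueness then yields $Z_t\equiv 0$, contradicting $\mu(\{x_1>x_2\})=1$.

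For part (b), under $p(\pm\infty)=\pm\infty$ and $M(\ER)<\infty$ the classical Feller boundary classification gives positive recurrence of $X$ with $\nu=M/M(\ER)$. Since $Lp\equiv 0$, $dp(X_t)=(p'\sigma)(X_t)\,dW_t$, so $D_t:=p(X^{x_1}_t)-p(X^{x_2}_t)$ is a continuous local martingale; comparison combined with the monotonicity of $p$ makes it non-negative, hence a supermartingale converging a.s. to an integrable $D_\infty\ge 0$. To show $D_\infty=0$ a.s., I would invoke part (a): the duplicated semigroup is Feller, its Ces\`aro empirical measures are tight (their marginals are both $\nu$), and the unique possible weak subsequential limit is $\nu_\Delta$ by (a); a standard Markov argument then yields weak convergence of the Ces\`aro empirical measures to $\nu_\Delta$ (in probability, and a.s.\ after extraction for non-negative observables). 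Applied to $f(x_1,x_2)=|p(x_1)-p(x_2)|\wedge 1$, this gives $\frac{1}{T}\int_0^T(|D_t|\wedge 1)\,dt\to 0$ a.s., while the a.s.\ convergence $D_t\to D_\infty$ forces the same Ces\`aro mean to tend to $|D_\infty|\wedge 1$, whence $D_\infty=0$ a.s. and the pathwise confluence claim follows. The main obstacle throughout is the contradiction step in (a): carefully matching the integrability afforded by the $\lambda$-condition with the symmetrization/pathwise-uniqueness ingredient is the delicate part, whereas (b) is an essentially clean consequence of the scale-function martingale structure once (a) is in hand.
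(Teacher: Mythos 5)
Your plan for part (b) is essentially the paper's (nonnegative local martingale $p(X^{x_1}_t)-p(X^{x_2}_t)$, convergence, then Cesàro identification of the limit as $0$ via uniqueness of $\nu_\Delta$), though you gloss over the tightness of the Ces\`aro occupation measures started from a fixed $(x_1,x_2)$: their marginals are $\frac1T\int_0^T P_s(x_i,\cdot)ds$, not $\nu$, and the paper has to establish tightness using a comparison bound $P_t(x_0,[M,\infty))\le \nu([M,\infty))/\nu([x_0,\infty))$. That step needs to be supplied.

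The real problem is in part (a). You take a genuinely different route from the paper — an ergodic balance equation via It\^o on $\log Z_t$ — and that route does not close. Three concrete obstructions. (1) \emph{Integrability.} Birkhoff applied to the integrand requires it to be in $L^1(\mu)$, but the relevant bound from the $\lambda$-condition is $(\sigma(x_1)-\sigma(x_2))^2/(x_1-x_2)^2 \le (\lambda(|x_1-x_2|)/|x_1-x_2|)^2$, which is allowed to blow up at the diagonal (take $\lambda(u)=\sqrt{u}$: the Yamada--Watanabe condition holds, but the bound is $1/|x_1-x_2|$, not integrable under an arbitrary $\mu$ carried by $\{x_1>x_2\}$). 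The condition $\int_{0^+}\lambda^{-2}=\infty$ is made for pathwise uniqueness, not for this kind of integrability, and you cannot simply claim it under control without further argument. (2) \emph{The symmetrization does not flip the sign.} The drift term of $\log Z_t$ is $\dfrac{b(x_1)-b(x_2)}{x_1-x_2}$, which is \emph{symmetric} under $(x_1,x_2)\mapsto(x_2,x_1)$; the diffusion term is also symmetric. Swapping the coordinates gives an invariant measure $\tilde\mu$ carried by $\{x_1<x_2\}$ with exactly the same balance equation, so no contradiction is extracted. (3) \emph{The final step is not valid.} Even if the balance forced $\sigma(X^{x_1}_0)=\sigma(X^{x_2}_0)$ $\mu$-a.s., pathwise uniqueness of $(SDE)$ says nothing about two solutions driven by the same Brownian motion but started at two distinct points; equality of $\sigma$ at the two locations does not make the trajectories collapse. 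In contrast, the paper closes (a) with a very short argument needing none of this: if $\mu(\{x_1>x_2\})=1$ and both marginals of $\mu$ are $\nu$, then for any bounded \emph{strictly increasing} $\varphi$ one has $\ES_\mu[\varphi(X^1_0)-\varphi(X^2_0)]>0$ while the equality of marginals forces it to vanish. You should replace the It\^o/Birkhoff route by this monotonicity argument, or supply the missing pieces above.
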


\smallskip \begin{Remarque}
\label{remhasm}
$\rhd$ The general assumptions  on $b$ and $\sigma$ are obviously  fulfilled whenever  these functions are locally Lipschitz with linear growth.

\smallskip
\noindent $\rhd$  The proofs of both claims are  based on (typically one-dimensional)  comparison arguments. 
This also explains the assumption on $\sigma$ which is a classical sufficient assumption to ensure comparison of solutions, namely, if
$x_1\le x_2$, then $X_t^{x_1}\le X_t^{x_2}$ for every $t\ge 0$ $a.s.$ (see \cite{ikeda-watanabe}).

\smallskip
\noindent $\rhd$  The additional assumptions made in $(b)$ imply the uniqueness of $\nu$ (see the proof below).  The uniqueness of the invariant distribution $\nu_{\Delta}$ for the duplicated diffusion follows by $(a)$. However,  it can also be viewed as a direct consequence of the asymptotic pathwise confluence of $p(X^{x_i}_t)$, $i=1,2$ as $t\rightarrow+\infty$. Actually, if for all $x_1, x_2 \in \ER^d$, $p(X^{x_1}_t) - p(X^{x_2}_t)\xrn{t\nrn}0$ $a.s$, we deduce that for any  invariant distribution $\mu$ of $(X^{x_1},X^{x_2})$ and every $K > 0$ 
$$
\int_{\ER} \Big(|p(x_1)-p(x_2)|\wedge K\Big)\mu(dx_1,dx_2)\le \limsup_{t\to +\infty}\frac{1}{t}\int_0^t \ES_{\mu}\Big(|p(X_s^{x_1})-p(X_s^{x_2})|\wedge K\Big)ds=0.$$
As a consequence, $p(x_1)=p(x_2)$ $\mu(dx_1,dx_2)$-$a.s.$ Since $p$ is an increasing function, it follows that $\mu(\{(x,x),x\!\in\ER\})=1$ and thus that $\mu=\nu_{\Delta}$. 

\smallskip
\noindent $\rhd$  {As mentioned before, $(b)$  slightly extends a result by Has'minskii obtained in~\cite{Has'm}} with different methods and under the additional assumption that $\sigma$ never vanishes (whereas we only need the scale function $p$ to be finite which allows e.g. for the existence of integrable singularities of $\frac{b}{\sigma^2}$). Note however   that the case of an infinite speed measure $M$ (which corresponds to null recurrent diffusions) is also investigated  in~\cite{Has'm}, requiring extra non-periodicity assumptions on $\sigma$.
\end{Remarque}

\noindent{\em Proof.} $(a)$ Throughout the proof we denote by $(X^{x_1}_t,X^{x_2}_t)$  the duplicated diffusion at time $t\ge 0$ and by $(Q_t((x_1,x_2), dy_1,dy_2))_{t\ge 0}$ its Feller Markov semi-group. The set ${\cal I}_{DSDS}$ of  invariant distributions of $(Q_t)_{t\ge0}$ is clearly nonempty, convex and weakly closed. Since any such distribution $\mu$ has $\nu$ as marginals (in the sense $\mu(dx_1\times \ER)= \mu(\ER \times dx_2)=\nu$), the set ${\cal I}_{DSDS}$ is tight and consequently weakly compact in the the topological vector space of signed measures on $(\ER^2, {\cal B}or(\ER^2))$ equipped with the weak topology. As a consequence of the Krein-Millman Theorem, ${\cal I}_{DSDS}$ admits extremal distributions and is the  convex hull of these extremal distributions.

\smallskip Let $\mu$ be such an extremal distribution and consider the following   three subsets of $\ER^2$:
\[
A^+=\{(x_1,x_2), \; x_2 >  x_1\},\; A^-=\{(x_1,x_2), \; x_1> x_2\} \mbox{ and } A_0 =  \{(x,x),\; x\!\in \ER\}= \Delta_{\ER^2}.
\] 
We first want to show that if $\mu(A^+)>0$ then the conditional distribution $\mu^{A^+}$ defined by $\mu^{A^+} = \frac{\mu(\,.\,\cap A^+)}{\mu(A^+)}$ is also an invariant distribution for $(Q_t)_{t\ge0}$.
\smallskip
Under the above assumptions on $b$ and $\sigma$, one derives  from classical comparison  theorems and strong pathwise uniqueness arguments for the solutions of $(SDE)$ (see $e.g.$ \cite{ikeda-watanabe})
that
$$
\forall\,(x_1,x_2)\in {}^c\!A^{+} = \ER^2 \setminus \!A^{+},\quad 
Q_t((x_1,x_2),^c\!A^{+})=1.
$$

We deduce that for every $(x_1,x_2)\in\ER^2$ and $t\ge0$, 
\[
Q_t((x_1,x_2),A^{+})= \PE\big((X^{x_1}_t,X^{x_2}_t)\!\in A^{+}\big) = \mbox{\bf 1}_{A^{+}}(x_1,x_2)\PE(\tau_{x_1,x_2}>t)
\]
where $\tau_{x_1,x_2}=\inf\{t\ge 0, X_{t}^{x_2}\le X_t^{x_1}\}$. The second equality follows from the pathwise uniqueness since no bifurcation can occur. Now, let $\mu\!\in {\cal I}_{DSDS}$. Integrating the above equality and letting $t$ go to infinity implies
\[
\mu(A^+) = \int_{A^+}\mu(dx_1,dx_2)\PE(\tau_{x_1,x_2}=+\infty).
\]
If $\mu(A^+)>0$, then $\mu(dx_1,x_2)$-$a.s.$ $\PE(\tau_{x_1,x_2}=+\infty)=1$ on $A^+$ $i.e.$ $X^{x_2}_t>X^{x_1}_t$ for every $t\ge 0$ $a.s.$.  As a consequence,
 $\mu(dx_1,dx_2)$-$a.s.$, for every $B\in{\cal B}(\ER^{d}\times\ER^{d} )$,
$$
{\bf 1}_{(x_1,x_2)\in A^+} Q_t((x_1,x_2),B)={\bf 1}_{(x_1,x_2)\in A^+}
Q_t((x_1,x_2),B\cap A^+)=Q_t((x_1,x_2),B\cap A^+)
$$ 
where we used again that $Q_t((x_1,x_2), A^+)=0$ if $x_2\le x_1$. 
Then, since $\mu$ is invariant, we deduce from an integration of the above equality that
$$\mu(B\cap A^{+})= \int_{\ER^2} Q_t((x_1,x_2),B){\bf 1}_{(x_1,x_2)\in A^+}\mu(dx_1,dx_2).$$
It follows that if $\mu(A^+)>0$, $\mu^{A^+}$ is invariant.\\
If $\mu(A^+)<1$, one shows likewise that  $\mu^{^cA^+}$ an invariant distribution   for $(Q_t)_{t\ge0}$ as well. 
Then,  if $\mu(A^+)\!\in (0,1)$, then $\mu$ is a convex combination of elements of ${\cal I}_{DSDS}$
\[
\mu = \mu(A^+) \mu^{A^+}+ \mu(^cA^+) \mu^{^cA^+} 
\]
so that $\mu$ cannot be extremal. Finally  $\mu(A^+)=0$ or $1$.

\smallskip
Assume $\mu(A^+)=1$ so that $\mu= \mu(.\cap A^+)$. This implies that $X^1_0>X^2_0$ $\PE_{\mu}$-$a.s.$. But $\mu$ being invariant, both its marginals are $\nu$ $i.e.$ $X^1_0$ and $X^2_0$ are $\nu$-distributed. This yields a contradiction. Indeed, let $\varphi$ be a bounded increasing positive function. For instance, set $\varphi(u):= 1+\frac{u}{\sqrt{u^2+1}}$, $u\!\in \ER$. Then, $\ES[\varphi(X_0^1)-\varphi(X_0^2)]>0$ since $X_0^1>X_0^2$ $\PE_{\mu}$-$a.s.$ but we also have
$\ES[\varphi(X_0^1)-\varphi(X_0^2)]=0$ since $X_0^1$ and $X_0^2$ have the same distribution. This contradiction implies 
that $\mu(A_+)=0$.
\smallskip

One shows likewise that $\mu(A^-)=0$ if $\mu$ is an  extremal measure. Finally any extremal distribution of ${\cal I}_{DSDS}$ is supported by $A_0=\Delta_{\ER^2}$. Given the fact that the marginals of $\mu$ are $\nu$ this implies that $\mu = \nu_{\Delta}= \nu \circ (x\mapsto (x,x))^{-1}$ which in turn implies that ${\cal I}_{DSDS} = \{\nu_{\Delta}\}$.

\medskip
\noindent $(b)$ Since the speed measure $M$  is finite and $\sigma$ never vanishes, the distribution $\nu(d\xi)= M(d\xi)/M(\ER)$  is the unique invariant measure of the diffusion. Thus, by $(a)$, we also have the uniqueness of the invariant distribution for the duplicated diffusion.Let $x_1$, $x_2\!\in \ER$.
If $x_1>x_2$ then $X^{x_1}_t\ge X^{x_2}_t$, still by a comparison argument,  and $p(X^{x_1}_t)\ge p(X^{x_2}_t)$  since $p$ is increasing.
Consequently $M^{x_1,x_2}_t= p(X^{x_1}_t)-p(X^{x_2}_t)$, $t\ge 0$, is a non-negative continuous local martingale, hence $\PE$-$a.s.$ converging toward  a finite random limit $\ell^{x_1,x_2}_{\infty} \ge 0$. One proceeds likewise when $x_1<x_2$ (with $\ell^{x_1,x_2}_{\infty}\le 0$). When $x_1=x_2$, $M_t= \ell^{x_1,x_2}_{\infty}\equiv 0$. 
The aim is now to show that $\ell^{x_1,x_2}_{\infty}= 0$ $a.s.$ To this end, we introduce
$$\mu_t(dy_1,dy_2):=\frac{1}{t}\int_0^t Q_s((x_1,x_2),dy_1,dy_2) ds,\qquad (x_1,x_2)\in\ER^{d}\times\ER^{d} $$ 
and we want to check that for every $(x_1,x_2)\in\ER^{d}\times\ER^{d} $, $(\mu_t(dy_1,dy_2))_{t\ge1}$ converges weakly to $\nu_{\Delta}$.
Owing to the uniqueness of $\nu_\Delta$ established in $(a)$ and to the fact that any weak limiting distribution  of $(\mu_t(dy_1,dy_2))_{t\ge1}$ is always  invariant (by construction), it is enough to prove that  $(\mu_t(dy_1,dy_2))_{t\ge1}$ is tight. Since the tightness of a sequence of probability measures  defined on a product space is clearly equivalent to that of its first and second marginals, it is here enough to prove the tightness of  $(t^{-1}\int_0^{t} P_s(x_0,dy) ds)_{t\ge1}$ for any $x_0\in\ER$.\\
Let $x_0\in\ER$. Owing to the comparison theorems, we have for all $t\ge0$ and $M\in\ER$, $P_t(x_0,[M,+\infty))  \le  P_t(x,[M,+\infty))$ if $x\ge x_0$ and $   P_t(x_0,(-\infty,M])  \le P_t(x,(-\infty,M])$ if $x_0\ge x$.  Since  $\nu$ is invariant and equivalent to the Lebesgue measure, we deduce that
$$
P_t(x_0,[M,+\infty))  \le \frac{\nu([M,+\infty))}{\nu([x_0,+\infty))}\quad\textnormal{and}\quad  P_t(x_0,(-\infty,M))  \le \frac{\nu((-\infty,M))}{\nu((-\infty,x_0])}.
$$
The tightness of $(P_t(x_0,dy))_{t\ge1}$ follows (from that of $\nu$) and we derive from what preceeds that
$$
\forall \,(x_1,x_2)\in\ER^{d}\times\ER^{d} ,\quad \frac{1}{t}\int_0^t Q_s((x_1,x_2),dy_1,dy_2) ds\stackrel{(\ER^d)}{\Longrightarrow}\nu_{\Delta}(dy_1,dy_2).
$$
Now, note that  for every $L\!\in \EN$, the function $g_{_L}: (y_1,y_2)\mapsto |p(y_1)-p(y_2)|\wedge L$ is  continuous and bounded. Hence  by C\'esaro's Theorem, we have that
\[
\frac 1t \int_0^t Q_s(g_{_L})(x_1,x_2)ds = \frac 1t \int_0^t \ES\, g_{_L}(X^{x_1}_s,X^{x_2})ds \longrightarrow \ES \,( |\ell^{x_1,x_2}_\infty|\wedge L )
\]
whereas, by the above weak convergence of $(\mu_t(dy_1,dy_2))_{t\ge1}$, we get 
\[
\frac 1t \int_0^t Q_s(g_{_L})(x_1,x_2)ds \longrightarrow \int_{\ER^d} g_{_L}(y_1,y_2)\nu_{\Delta}(dy_1,dy_2) = 0\quad \mbox{ as }\quad t\to +\infty
\]
since $g_{_L}$ is identically $0$ on $\Delta_{\ER^{d}\times\ER^{d} }$. It follows, by letting $L$ go to infinity, that
\[
\ES \,|\ell^{x_1,x_2}_\infty| = 0.
\]
This implies $\ell^{x_1,x_2}_{\infty} = 0$ $\PE$-$a.s.$ which in turn implies that
\[
\PE \mbox{-}a.s. \quad p(X^{x_1}_t)-p(X^{x_2}_t)\longrightarrow 0\quad \mbox{ as }\quad t\to +\infty.\qquad
\]
{Finally, it remains to prove that we can exchange the quantifiers, $i.e.$ that  $\PE \mbox{-}a.s.$,
 $p(X^{x_1}_t)-p(X^{x_2}_t)\longrightarrow 0$ for every $x_1$, $x_2$. Assume that $x_1\ge x_2$. Again by the comparison theorem and the fact that $p$ increases, we have $0\le p(X_t^{x_1})-p(X_t^{x_2})\le p(X_t^{\lfloor x_1\rfloor+1})-p(X_t^{\lfloor x_2\rfloor})$. This means that we can come down to a countable set of starting points. $\hfill \cqfd$}
 
  \medskip
\noindent In the continuity of the second part of Theorem \ref{theoremhasm}$(b)$, it is natural to wonder whether a one-dimensional diffusion is asymptotically confluent, $i.e.$ when for all $x_1,x_2\in\ER$, $X_t^{x_1}-X_t^{x_2}$ tends to $0$ $a.s$ as $t\rightarrow+\infty$.
In the following corollary, we show that such property holds in a quite general setting.
\begin{Corollaire}\label{corhasm} $(a)$ Assume  the hypothesis of Theorem \ref{theoremhasm}$(b)$ hold. If furthermore,
\[
\sigma \mbox{ never vanishes $\quad$ and }\quad \limsup_{|x|\to +\infty} \int_0^x\frac{b}{\sigma^2}(\xi)d\xi<+\infty
\]
then, {$\PE \mbox{-}a.s.$, for every $x_1,\,x_2\!\in \R$},
$$
 \quad X^{x_1}_t-X^{x_2}_t\longrightarrow 0\quad \mbox{ as }\quad t\to +\infty.
$$
\noindent $(b)$ The above condition is in particular satisfied if there exists $M>0$ such that
$$
|x|>M\Longrightarrow {\rm sign}(x) b(x)\le0.
$$
\end{Corollaire}

\begin{proof} $(a)$ Under the assumptions of the theorem, $p$ is continuously differentiable on $\ER$ and 
\[
p'(x) =  e^{-\int_{x_0}^x \frac{2b}{\sigma^2}(u)du}, \; x\!\in \ER.
\]
Then it is clear that $\displaystyle p'_{\inf} = \inf_{x\!\in \ER}p'(x)>0$ iff $\displaystyle \limsup_{|x|\to +\infty}\int_{x_0}^x \frac{2b}{\sigma^2}(\xi)d\xi<+\infty$.
By the fundamental theorem of calculus, we know that,
$$
|X_t^{x_1}-X_t^{x_2}|\le \frac{1}{p'_{\min}}|p(X_t^{x_1})-p(X_t^{x_2})|
$$
and the result follows from Theorem \ref{theoremhasm}$(b)$.

\smallskip
\noindent $(b)$ Since $\sigma$ never vanishes, $p''$ is well-defined and for every $x\!\in\ER$, $p''(x)=-\frac{2b(x) p'(x)}{\sigma^2(x)}$. Using that $p'$ is positive, we deduce from the assumptions that
\begin{equation*}
\exists M>0\quad \textnormal{such that}\quad \begin{cases} p''(x)\ge 0,&  x\ge M\\
p''(x)\le 0,& x\le -M.
\end{cases}
\end{equation*}
Now, $p'$ being continuous, it follows that $p'$ attains  a positive minimum   $p'_{\min}>0$. 
\end{proof}

\noindent {\sc \bf Examples.} {\bf 1.} Let $U$ be a positive a twice differentiable function such that $\displaystyle \lim_{|x|\rightarrow+\infty} U(x)=+\infty$  and consider the one-dimensional Kolmogorov equation $dX_t=-U'(X_t)dt+\sigma dW_t$ 
with $\sigma>0$. Then, 
$$
\liminf_{|x|\rightarrow+\infty} x U'(x)>\frac{\sigma^2}{2}\quad \Longrightarrow\;\quad \; X_t^x-X_t^y\xrn{t\nrn}0\; a.s.
$$
Note that in particular, this result holds true even if   $U$ has several local minimas. 

\smallskip
\noindent 
{\bf 2.} Let $\sigma:\R\to (0,+\infty)$ be a   locally Lipschitz continuous  function with linear growth so that the SDE  
$$
dX_t = \sigma(X_t)dW_t 
$$ 
defines a (Markov)  flow $(X^x_t)_{t\ge 0}$ of local martingales. If $\frac{1}{\sigma}\!\in L^2(\R, {\cal B}or(\R),\lambda)$ then there exists a unique invariant measure $\nu(d\xi)=c_{\sigma}\frac{d\xi}{\sigma^2(\xi)}$ and $(X^{x_i}_t)_{t\ge 0}$, $i=1,2$ is pathwise confluent (in the sense of Theorem~\ref{theoremhasm}$(b)$) since $p(x)=x$. Note that  the linear growth assumption cannot be significantly relaxed since a stationary process cannot be  a true martingale which in turn implies that $\nu$ has no (finite) first moment.

\section{The multidimensional case}\label{sec:dge2}
In this section, we begin by an example of a multidimensional Brownian diffusion $(X^{x_1},X^{x_2})$ for which $\nu_\Delta$ (image of $\nu$ on the diagonal) is not the only one invariant distribution. Thus, Theorem \ref{theoremhasm} is specific to the case $d = 1$ and we can not hope to get a similar result for the general case $d \ge 2$. It is of course closely related to the classification of two-point motion on smooth compact Riemannian manifolds since the unit circle will turn out to be a uniform attractor of the diffusion.

\subsection{Counterexample in $2$-dimension}
Roughly speaking, saying that $\nu_\Delta$ is the only one invariant distribution means in a sense that $X_t^x-X_t^y$ has a tendency to converge towards $0$ when $t\rightarrow+\infty$. Thus, the idea in the counterexample below is to build a ``turning'' two-dimensional ergodic process where the angular difference between the two coordinates does not depend on $t$. Such a construction leads to a model where the distance between the two coordinates can not tend to $0$ (Note that some proofs are deferred to Appendix B).  
\smallskip
    \noindent We consider the $2$-dimensional $SDE$ with Lipschitz continuous coefficients defined $\forall x \in \ER^2$ by
\begin{eqnarray*}
b(x)&=&\Big(x\mbox{\bf 1}_{\{0\le |x|\le 1\}} -\frac{x}{|x|} \mbox{\bf 1}_{\{ |x|\ge 1\}}      \Big)(1-|x|)\\
\sigma(x) &=&\vartheta {\rm Diag}(b(x)) + \left[\begin{array}{cc}0&-cx^2\\cx^1&0,
\end{array}\right]
\end{eqnarray*}
where $\vartheta,\, c\!\in (0,+\infty)$ are fixed parameters. 

Switching to polar coordinates $X_t = (r_t\cos \varphi_t, r_t\sin \varphi_t)$, $t\!\in \ER_+$, we obtain that this $SDE$ also reads
\begin{eqnarray}
dr_t& =&\min(r_t,1)(1-r_t)(dt +\vartheta dW^1_t),\quad r_0\!\in \ER_+ \label{eq:countex1}\\
d\varphi_t &=&c\,dW^2_t,\quad \varphi_0\!\in [0,2\pi), \label{eq:countex2}
\end{eqnarray}
where $x_0=r_0(\cos\varphi_0, \sin\varphi_0)$ and $W=(W^1,W^2)$ is a standard $2$-dimensional Brownian motion. 

\noindent Standard considerations about Feller classification (see Appendix B for details) show that, if $x_0\neq 0$ ($i.e.$ $r_0>0$) and $\vartheta\!\in (0,\sqrt{2})$  then 
\begin{equation}\label{rayontend}
r_t\longrightarrow 1\;\mbox{ as }\;t\to +\infty,
\end{equation}
while it is classical background that
\[
\PE \mbox{-}a.s.\quad \forall\varphi_0\!\in \ER_+,\quad \frac 1t \int_0^t \delta_{e^{i(\varphi_0+cW^2_s)}}ds\Longrightarrow \lambda_{\mathcal{S}_1} \quad \mbox{ as } \quad t\to + \infty
\] 
where $\mathcal{S}_1$ denotes the unit circle of $\ER^2$. Combining these two results straightforwardly yields 
\[
	\forall\, x\!\in \ER^2\setminus\{(0,0)\},\; \PE\mbox{-}a.s.\; \quad \frac 1t \int_0^t \delta_{X^{x}_s}ds \stackrel{(\ER^2)}{\Longrightarrow} \lambda_{\mathcal{S}_1} \quad \mbox{ as }\quad t\to+ \infty.
\]

On the other hand, given the form of  $\varphi_t$, it is clear that  if  $x= r_0e^{i\varphi_0}$ and $x'= r'_0e^{i\varphi'_0}$, $r_0$, $r'_0\neq 0$, $\varphi_0$, $\varphi'_0\!\in [0, 2\pi)$, then
\[
\lim_{t\to +\infty}|X_t^x-X^{x'}_t|= |e^{i(\varphi_0-\varphi'_0)}-1|
\]
which in turn implies that
\[
\lim_{t\to +\infty}\frac{1}{t}\int_0^t |X_s^x-X^{x'}_s|ds= |e^{i(\varphi_0-\varphi'_0)}-1|\qquad \PE\mbox{-}a.s.
\]
This limit being different from $0$ as soon as $\varphi_0\neq\varphi'_0$,   one derives, as  a consequence, that $\nu_\Delta$ cannot be the only invariant distribution.  In fact, a more precise statement can be proved.

\begin{prop}$(a)$ A distribution $\mu$ is  invariant for  the semi-group  $(Q_t)_{t\ge0}$ of the duplicated diffusion if and only if $\mu$ has the following form: 
\begin{equation}\label{IMC}
\mu={\cal L}(e^{i\Theta},e^{i(\Theta+V)})
\end{equation}
where $\Theta$ is uniformly distributed over $[0,2\pi]$ and $V$ is a $[0,2\pi)$-valued random variable independent of $\Theta$ 

\smallskip
\noindent $(b)$ When $V=0$ $a.s.$, we retrieve $\nu_\Delta$ whereas, when $V$ also has uniform distribution on $[0,2\pi]$, we obtain $\nu\otimes\nu$. Finally, $\mu$ is extremal in the convex set of $(Q_t)_{t\ge0}$ invariant distributions  if and only if there exists $\theta_0\in[0,2\pi)$
such that $V={\theta_0}$ $a.s$.
\end{prop}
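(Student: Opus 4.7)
The plan is to reduce the problem to the torus by exploiting the two structural facts established just before the statement: for any $x_0 \neq 0$, the radius $r_t$ converges a.s. to $1$, and since the two angular components $\varphi_t,\varphi'_t$ are both driven by the same scalar Brownian motion $W^2$ with no drift, their difference $\varphi_t - \varphi'_t \equiv \varphi_0 - \varphi'_0$ is a.s. constant in time.

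\textbf{Step 1 (support on the torus).} First I show that any invariant distribution $\mu$ for $(Q_t)$ is supported by $\mathcal{S}_1 \times \mathcal{S}_1$. Each marginal of $\mu$ is invariant for the one-particle semi-group, and combining the a.s. convergence $r_t \to 1$ from any $r_0 > 0$ with a bounded convergence argument applied to the Ces\`aro averages $\frac{1}{t}\int_0^t P_s(x_0,\cdot)\,ds$ (as in \eqref{loiind1}) forces each marginal to concentrate on $\mathcal{S}_1$, i.e.\ to equal $\lambda_{\mathcal{S}_1}$. Hence $\mu$ lives on $\mathcal{S}_1 \times \mathcal{S}_1$ and may be parameterized by angles $(\alpha,\beta) \in [0,2\pi)^2$.

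\textbf{Step 2 (necessity and sufficiency of \eqref{IMC}).} On $[0,2\pi)^2$ the dynamics read $d\alpha_t = d\beta_t = c\,dW^2_t$, so that $V := (\beta_0 - \alpha_0) \bmod 2\pi$ is conserved along trajectories while $\alpha_t = \alpha_0 + cW^2_t$ is Brownian motion on the circle with $W^2$ independent of the initial data. Setting $\Theta := \alpha_0$, invariance of $\mu$ amounts to the joint law of $(\Theta,V)$ being preserved under the map $(\Theta,V) \mapsto (\Theta + cW^2_t \bmod 2\pi,\,V)$ for every $t>0$. Disintegrating along $V$, the conditional law $\mathcal{L}(\Theta\mid V = v)$ must be invariant under Brownian motion on the circle; by ergodicity of circular Brownian motion (unique invariant law = uniform) this conditional law equals the uniform law for $\mathcal{L}(V)$-a.e. $v$. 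Therefore $\Theta$ is uniform on $[0,2\pi)$ and independent of $V$, and $\mu = \mathcal{L}(e^{i\Theta}, e^{i(\Theta+V)})$. The converse is immediate: if $\Theta$ is uniform and independent of $V$, then $\Theta + cW^2_t \bmod 2\pi$ is again uniform and independent of $V$, so the joint law is preserved.

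\textbf{Step 3 (part (b) and extremality).} Taking $V \equiv 0$ yields $\mu = \nu_\Delta$; taking $V$ uniform and independent of $\Theta$, a direct calculation on the torus shows $(\Theta,\Theta+V \bmod 2\pi)$ is uniform on $[0,2\pi)^2$, hence $\mu = \nu \otimes \nu$. For extremality, observe that the assignment $\mathcal{L}(V) \mapsto \mathcal{L}(e^{i\Theta}, e^{i(\Theta+V)})$ is an affine bijection between $\mathcal{P}([0,2\pi))$ and the convex set of invariant distributions described in (a); since the extremal elements of $\mathcal{P}([0,2\pi))$ are precisely the Dirac masses $\delta_{\theta_0}$, the extremal invariant distributions of $(Q_t)$ are those corresponding to $V = \theta_0$ a.s. The main obstacle is Step 1, where the localization of the marginals on $\mathcal{S}_1$ must be argued carefully since the ambient state space is non-compact and the origin is an absorbing equilibrium; everything else reduces to a disintegration along the conserved quantity $V$ combined with the ergodicity of Brownian motion on the circle.
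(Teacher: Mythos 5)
Your proof is correct, and the overall skeleton (reduce to $\mathcal{S}_1\times\mathcal{S}_1$, observe that $V$ is a conserved quantity while $\Theta$ evolves as circular Brownian motion) is the same as the paper's. However, the key step differs in a genuine way. The paper disintegrates in the \emph{opposite} direction: it writes $K(\theta,dv)=\mathcal{L}(V_0\mid\Theta_0=\theta)$ and shows $K(\theta,dv)$ is independent of $\theta$ by letting $t\to\infty$, using the \emph{mixing} of circular Brownian motion (i.e.\ $\rho_t(\theta,\cdot)\to\lambda_{\mathcal{S}_1}$, so the conditional law of $V_t=V_0$ given $\Theta_t$ decorrelates from the starting angle). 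You instead disintegrate $\mathcal{L}(\Theta\mid V=v)$ and invoke \emph{ergodicity} (uniqueness of the invariant law) of circular Brownian motion to conclude it is uniform for a.e.\ $v$. Both arguments are sound; yours is arguably more transparent because it uses only the ergodic statement rather than the quantitative convergence to equilibrium, and it meshes more directly with the extremality claim. One small technical point you should make explicit in Step 2: the identity $\pi_v=\pi_v*\rho_t$ a priori holds for $\mathcal{L}(V)$-a.e.\ $v$ with the null set depending on $t$; one passes to a single null set via a countable dense set of times $t$ and weak continuity of $t\mapsto\rho_t$. For part (b), your affine-bijection argument ($\mathcal{L}(V)\mapsto\mathcal{L}(e^{i\Theta},e^{i(\Theta+V)})$ is an affine injection from $\mathcal{P}([0,2\pi))$ onto the invariant set, so extremal points correspond to Dirac masses) is actually cleaner and more complete than what the paper sketches, which only observes that the sets $\Gamma_\theta$ are left stable by $(Q_t)$ and does not explicitly argue the converse direction. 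One caveat that affects both your proof and the paper's: the origin is an absorbing equilibrium (since $b(0)=\sigma(0)=0$), so $\delta_{(0,0)}$, $\delta_0\otimes\lambda_{\mathcal{S}_1}$, etc.\ are also invariant for $(Q_t)$ on $\ER^2\times\ER^2$; the statement must be understood on the restricted state space $(\ER^2\setminus\{0\})^2$, as the paper does implicitly when proving $r_t\to 1$ only for $r_0>0$.
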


The proof is postponed to Appendix~B. However, note that the claim about extremal invariant distributions  follows from the fact that  for every $\theta\in[0,2\pi)$, $(Q_t)_{t\ge0}$ leaves the set $\Gamma_{\theta}:=\{(e^{i\varphi},e^{i\varphi'})\in{\cal S}_1\times{\cal S}_1, \varphi'-\varphi\equiv\theta \,{\rm mod.} \,2\pi\}$ stable.

\smallskip \begin{Remarque}
 In the above counterexample, the invariant measure of $(r_t)_{t\ge0}$ is the Dirac measure $\delta_1$. In fact, setting again $x= \varepsilon_0e^{i\varphi_0}$ and $x'= r'_0e^{i\varphi'_0}$ and using that 
$ X_t^x-X_t^{x'}= r_t^x\left(e^{i(\varphi_0+W_t^2)}-e^{i(\varphi'_0+W_t^2)}\right)+(r_t^x-r_t^{x'})e^{i(\varphi'_0+W_t^2)},$
an easy adaptation of the above proof shows that it can be generalized to any ergodic non-negative process $(r_t)_{t\ge0}$ {solution to an autonomous SDE and} satisfying the following properties: 
\begin{itemize}
\item{} Its unique  invariant distribution $\pi$ satisfies $\pi(\ER_+^*)=1$. 
\item{} For every $x,y\!\in (0,+\infty)$,  $r_t^x-r_t^y\longrightarrow0$ $a.s.$ as $t\rightarrow+\infty$.
\end{itemize} 
For instance, let $(X^x_t)_{t\ge0}$ be an Ornstein-Uhlenbeck process satisfying the SDE  $dX_t=-X_t dt+\sigma dW_t, X_0=x$. Set $r^x_t=(X^x_t)^2$ (this is a special case of the  Cox-Ingersoll-Ross process). The process
$(r^x_t)$ clearly satisfies the first two properties. Furthermore, $(X^x_t)_{t\ge 0}$ satisfies $a.s.$ for every $x,y\!\in\ER$ and every $t\ge0$, $|X_t^x-X_t^y| = |x-y|e^{- t}$. Then, since for every 
$x\!\in\ER$, 
$$
\frac{X_t^x}{t} = -\frac{1}{t} \int_0^t X^x_sds +\sigma \frac{W_t}{t}\rightarrow0\quad a.s.\quad \mbox{ as } \quad t\rightarrow+\infty,
$$ 
it follows that $(r^x_t)_{t\ge0}$ also satisfies for all positive $x,\, y$,  $r_t^x-r_t^y\longrightarrow0$ $a.s.$ as $t\rightarrow+\infty$ (Many other examples can be built using Corollary~\ref{corhasm}).\\
Finally, note that if $\mu={\cal L}(Re^{i\Theta},R e^{i(\Theta+V)})$ where $R$, $\Theta$ and $V$ are independent
random variables such that the distributions of $R$ and $\Theta$ are respectively $\pi$ and the uniform distribution on $[0,2\pi]$
and $V$ takes values in $[0,2\pi)$, then $\mu$ is an invariant distribution of the associated duplication system.
\end{Remarque}

 In connection with  this counterexample we can mention a general result on the Brownian flows of Harris (see \cite{Harris},\cite{KUN} Theorem 4.3.2). The theorem gives conditions on $b$ and $\sigma$ under which $\nu$ is an invariant measure of the one point motion $(X^x_t)_{t \ge 0}$ and $\nu\otimes\nu$ is an invariant measure of the two point motion $(X^{x_1}_t,X^{x_2}_t)_{t \ge 0}$.

\subsection{Uniqueness of the invariant measure: $(S,\theta)$-confluence}
In the sequel of this section, we propose criterions for the uniqueness of the invariant distribution of the duplicated system
in the multidimensional case. The underlying idea of the criterions discussed below is to analyze  the coupled diffusion process $(X^{x_1},X^{x_2})$ through  the squared distance process $r_t = |X_t^{x_1} - X_t^{x_2}|_{_S}^2$ (where we recall that for a given positive definite matrix $S$, $|\,.\,|_{_S}$ is the Euclidean norm on $\R^d$ induced by the scalar product $(x|y)_{_S}=(x|Sy)$). It is somewhat  similar to that of  Has'minskii's test for explosion of diffusions in $\ER^d$   or  to the one proposed in Chen and Li's work devoted to the coupling of diffusions (see~\cite{CHLI}).
We begin by a general abstract result under an assumption depending  on a continuous   function $\theta: (0,+\infty)\to \ER_+$ to be specified further on. Then,   more explicit \textit{pointwise} or \textit{integrated} criterions are derived in the next subsections. In particular, one  involves a kind of bi-variate non-infinitesimal Lyapunov exponent. 

\smallskip
Let us introduce some notations. {For some probability measures $\nu$ and ${\nu'}$ on $\ER^{d}$, we set
$$
{\cal P}_{\nu,{\nu'}}^\star=\Big\{m\in {\cal P}(\ER^{d}\times\ER^{d} ), m(dx\times \ER^d)=\nu, \,m(\ER^d\times dy)={\nu'}, m(\Delta_{\ER^{d}\times\ER^{d} })=0\Big\}
$$
and  ${\cal P}^\star=\{{\cal P}_{\nu,{\nu'}}^{\star},\nu,{\nu'}\in{\cal I}_{SDE}\}$  where ${\cal I}_{SDE}$ denotes the set of invariant distributions of $(SDE)$. In particular,
${\cal P}^\star={\cal P}_{\nu,\nu}^\star$ when ${\cal I}_{SDE}=\{\nu\}$ (which is the case of main interest).\smallskip}
\noindent For $S\in {\cal S}^{++}(d,\ER)$, we also set 
$$
[b]_{S,+}=\sup_{x\neq y} \frac{(b(x)-b(y)|x-y)_{_S}}{|x-y|_{_S}^2}.
$$
Note that if $[b]_{S,+}<+\infty$ and if $\sigma$ is Lipschitz continuous, strong existence, pathwise uniqueness and the Feller Markov property hold for $(SDE)$.

For a continuous function $\theta:(0,+\infty)\rightarrow\ER_+$, we define the \textit{pseudo-scale} ${\cal C}^2$-function $f_\theta$  and its companion $g_{\theta}$ by
\begin{equation}\label{def:pseu-sc}
\forall u \in(0,+\infty),\qquad f_\theta(u)=\int_1^{u}e^{\int_\xi^1 \frac{\theta(w)}{w}dw}d\xi\;\mbox{ and }\; g_{\theta}(u)=uf'_{\theta}(u). 
\end{equation}
Finally, for $S$ and $\theta$ defined as above, we define the \textit{$(S,\theta)$-confluence} function $\Psi_{\theta,S}$ on $^c \Delta_{\ER^2d}$ by
$$
\Psi_{\theta,S}(x,y)=(b(x)-b(y)|x-y)_{_S} +\frac 12\|\sigma(x)-\sigma(y)\|^2_{_S}-\theta(|x-y|^2_{_S})\Big|(\sigma^*(x)-\sigma^*(y))\frac{S(x-y)}{|x-y|_{_S}}\Big|^2.
$$
Let us now state the result. 
\begin{theorem}\label{thm:echellemultidim}  Let $S\in{\cal S}^{++}(d,\ER)$. Assume that $b$ is a continuous function such that $[b]_{S,+}\!<\!+\infty$ and $\sigma$ is Lipschitz continuous. 
Assume that the set ${\cal I}_{SDE}$ of invariant distributions of $SDE$ is (nonempty, convex and) weakly compact.			
Furthermore, assume that 
for every $m\in {{{\cal P}^\star}}$, the following $(S,\theta)$-confluence condition is satisfied: there exists a continuous function $\theta: (0,+\infty)\to \ER_+$ such that
\begin{equation}\label{eq:Seminal Cond}
\hskip -0.25cm \left\{\begin{array}{ll}
  (i) \displaystyle \limsup_{u\rightarrow 0^+}\int_u^1\frac{\theta(w)-1}{w} dw<+\infty.\\
\\
(ii) \;
\displaystyle \int_{\R^{d}\times \ER^{d}} f'_\theta(|x-y|_{_S}^2)\Psi_{\theta,S}(x,y)m(dx,dy)<0.\\

\end{array}\right.
\end{equation}  
$(a)$ {\em Weak confluence (Uniqueness of both invariant distributions)}: 
Then, if  ${\cal I}_{DSDS}$ denotes the set of  invariant distributions of the duplicated system $(DSDS)$, one has  
\[
{\cal I}_{SDE}=\big\{\nu\big\}\qquad \mbox{ and }\qquad {\cal I}_{DSDS}=\big\{\nu_{\Delta}\big\}
\] 
keeping in mind that $\nu_{\Delta}= \nu\circ (x\mapsto(x,x))^{-1}$.

\smallskip
\noindent $(b)$ {\em Pathwise confluence:} Let $\theta: (0,+\infty)\to \ER_+$ be a continuous function such that $\displaystyle \int_{0}^1e^{\int_v^1 \frac{\theta(w)}{w}dw}dv<+\infty$ and such that 
$$
\forall \, x,\,y\!\in \ER^d, \, x\neq y, \quad\Psi_{\theta,S}(x,y)<0.
$$
If furthermore, for every $x\!\in\ER^d$, $\displaystyle \Big(\frac{1}{t}\int_0^t P_s(x,dy)ds\Big)_{t\ge 1}$ is tight, we  have $a.s.$   pathwise asymptotic confluence:
\begin{equation}\label{confluence++}
\forall\, x_1,\, x_2\!\in \ER^d,\quad X^{x_1}_t-X^{x_2}_t \longrightarrow 0\; \mbox{ as }\; t\to +\infty \;\; \PE \mbox{-}a.s. 
\end{equation}
\end{theorem}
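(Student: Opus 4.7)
The proof hinges on It\^o's formula applied to the one-dimensional process $r_t=|X^{x_1}_t-X^{x_2}_t|^2_{_S}$ and then to the pseudo-scale function $f_\theta(r_t)$. Using $f_\theta'(u)=\exp(\int_u^1\theta(w)w^{-1}dw)>0$ and $f_\theta''(u)=-\theta(u)f_\theta'(u)/u$, a direct computation yields
\[
df_\theta(r_t)=2f_\theta'(r_t)\,\Psi_{\theta,S}(X^{x_1}_t,X^{x_2}_t)\,dt+dM_t,
\]
where $M$ is a continuous local martingale with bracket $d\langle M\rangle_t=4(f_\theta'(r_t))^2\bigl|(\sigma^*(X^{x_1}_t)-\sigma^*(X^{x_2}_t))S(X^{x_1}_t-X^{x_2}_t)\bigr|^2 dt$. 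The role of condition $(i)$ in $(a)$ is to guarantee $f_\theta'(u)\le C/u$ near $0^+$, whence $|f_\theta(u)|=O(|\log u|)$ near the diagonal, controlling the singularity of $f_\theta$.

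\textbf{Part $(a)$.} Let $\mu\in{\cal I}_{DSDS}$. Pathwise uniqueness of $(SDE)$ makes $\Delta_{\R^d\times\R^d}$ absorbing, and a standard stationarity argument shows that under $\PE_\mu$ the entrance time into $\Delta$ from outside is almost surely infinite; hence, if $\mu({}^c\Delta)>0$, the normalized restriction $\tilde\mu=\mu(\cdot\mid{}^c\Delta)$ is again $(Q_t)$-invariant and $\PE_{\tilde\mu}$-a.s.\ the process stays off $\Delta$. Extract an ergodic component $m$ of $\tilde\mu$, divide the It\^o identity by $t$, and apply Birkhoff's ergodic theorem:
\[
\frac1t\!\int_0^t\!2f_\theta'(r_s)\Psi_{\theta,S}(X^{x_1}_s,X^{x_2}_s)ds\ \xrightarrow[t\to+\infty]{\text{a.s.}}\ 2\!\!\int\! f_\theta'(|x-y|^2_{_S})\Psi_{\theta,S}(x,y)\,m(dx,dy)<0
\]
by $(ii)$. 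Stationarity of $r_t$ under $\PE_m$ gives $f_\theta(r_t)/t\to 0$ in probability; the martingale strong law of large numbers (applied to $M$ whose bracket has stationary increments of finite expectation) gives $M_t/t\to 0$ a.s. Passing to the limit produces $0$ equal to a strictly negative quantity, a contradiction. Therefore $\mu(\Delta)=1$, so $\mu=\nu'\circ(x\mapsto(x,x))^{-1}$ with $\nu'\in{\cal I}_{SDE}$. If two distinct invariant distributions $\nu_1,\nu_2\in{\cal I}_{SDE}$ existed, Ces\`aro averaging $Q_t(\nu_1\otimes\nu_2,\cdot)$ would furnish an element of ${\cal I}_{DSDS}$ with marginals $(\nu_1,\nu_2)$, hence with positive $^c\Delta$-mass, and the same contradiction would apply. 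This yields ${\cal I}_{SDE}=\{\nu\}$ and ${\cal I}_{DSDS}=\{\nu_\Delta\}$.

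\textbf{Part $(b)$.} The integrability $\int_0^1 f_\theta'(v)dv<+\infty$ implies $f_\theta(0^+)>-\infty$; combined with $\Psi_{\theta,S}<0$ off the diagonal, this makes $f_\theta(r_t)$ a true supermartingale bounded below, hence a.s.\ convergent. Since $f_\theta$ is a continuous strictly increasing diffeomorphism of $[0,+\infty)$, $r_t\to r_\infty$ a.s.\ for some $r_\infty\in[0,+\infty)$. The hypothesized tightness of $t^{-1}\int_0^t P_s(x,\cdot)ds$ lifts to tightness of the duplicated Ces\`aro averages $t^{-1}\int_0^t Q_s((x_1,x_2),\cdot)ds$ via their marginals; and since the assumptions of $(b)$ imply those of $(a)$ (condition $(i)$ is a consequence of the finiteness of $\int_0^1 f_\theta'$), part $(a)$ identifies their unique weak limit as $\nu_\Delta$. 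Testing this weak convergence on the bounded continuous function $(y_1,y_2)\mapsto 1\wedge|y_1-y_2|^2_{_S}$ and using bounded convergence to interchange the Ces\`aro limit with the pointwise convergence $1\wedge r_s\to 1\wedge r_\infty$ yields $\ES[1\wedge r_\infty]=\int (1\wedge|y_1-y_2|^2_{_S})d\nu_\Delta=0$, whence $r_\infty=0$ a.s. Continuous dependence of the flow $x\mapsto X^x_t$ on the initial condition (a consequence of $[b]_{S,+}<+\infty$ and the Lipschitz property of $\sigma$) then allows one to exchange the quantifiers by reducing to a countable dense set of pairs $(x_1,x_2)$, establishing \eqref{confluence++}.

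\textbf{Main obstacle.} The most delicate step is the Birkhoff argument in $(a)$: one must simultaneously handle the singularity of $f_\theta$ at the diagonal (via $(i)$), harvest from $(ii)$ the $m$-integrability of $f_\theta'\Psi_{\theta,S}$, justify the martingale strong law via stationary bracket increments, and cope with the fact that ergodic components of $\tilde\mu$ may have marginals that are distinct elements of ${\cal I}_{SDE}$ rather than a prescribed $\nu$, forcing $(ii)$ to be interpreted as covering couplings with arbitrary pairs of marginals drawn from ${\cal I}_{SDE}$.
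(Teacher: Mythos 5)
Your proposal follows the paper's overall strategy closely — It\^o's formula applied to $f_\theta(|X^{x_1}_t-X^{x_2}_t|^2_{_S})$, the Krein--Millman/ergodic-decomposition reduction to an extremal invariant measure off the diagonal, Birkhoff's theorem, and the martingale law of large numbers; part $(b)$ likewise mirrors the paper's supermartingale/tightness argument. But there is one genuine gap and one spurious step.

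The gap is in your justification of $M_t/t\to 0$ $\PE_m$-a.s. You claim the bracket of $M$ has "stationary increments of finite expectation". But
\[
d\langle M\rangle_t = 4\,g_\theta(r_t)^2\,\Bigl|\tfrac{(\sigma^*(X^{x_1}_t)-\sigma^*(X^{x_2}_t))S(X^{x_1}_t-X^{x_2}_t)}{r_t}\Bigr|^2 dt, \qquad g_\theta(u)=u f'_\theta(u),
\]
and while the second factor is deterministically bounded by the Lipschitz property of $\sigma$, the function $g_\theta$ is only controlled near $0^+$ by condition $(i)$; it may blow up as $u\to+\infty$. Hypothesis $(ii)$ controls $\int f'_\theta\,\Psi_{\theta,S}\,dm$, not $\int g_\theta^2\,dm$, so the finiteness of $\ES_m[\langle M\rangle_1]$ does not follow. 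The paper handles this by first replacing $\theta$ by a continuous $\widetilde\theta\ge\theta$ equal to $\theta$ near $0$ and $\ge 1$ on $[1,\infty)$; this preserves $(ii)$, makes $g_{\widetilde\theta}$ non-increasing at infinity hence bounded, and yields the deterministic bound $\langle M\rangle_t\le C t$ from which the SLLN is immediate. Without this modification (or an equivalent device), your martingale step does not go through.

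The spurious step is the quantifier exchange at the end of $(b)$. After obtaining $r_\infty^{x_1,x_2}=0$ a.s.\ for each fixed pair $(x_1,x_2)$, you invoke "continuous dependence of the flow on the initial condition" to reduce to a countable dense set. This is not sound: continuity of $x\mapsto X^x_t$ at a fixed time $t$ does not allow passing an asymptotic property from a dense set to all pairs; one would need a bound uniform in $t$, which in dimension one the paper extracts from the comparison theorem (monotonicity in $x$), and for which there is no multidimensional analogue here. Fortunately the conclusion \eqref{confluence++}, as actually proved in the paper, is the "for each $(x_1,x_2)$, $\PE$-a.s." statement, so this step should simply be deleted. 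On the positive side, your use of stationarity of $r_t$ to get $f_\theta(r_t)/t\to 0$ in probability, then confronting it with the a.s.\ negative Birkhoff limit, is a genuine streamlining: it sidesteps the paper's case distinction on whether $f_\theta(0^+)=-\infty$ and its second application of Birkhoff to $|y_1-y_2|^2_{_S}$.
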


\smallskip \begin{Remarque}
\label{rem3.3}
 $\rhd$ Owing to Assumption $(i)$ and to $[b]_{S,+}<+\infty$, $(x,y)\mapsto f'_\theta({|x-y|^2})\Psi_{\theta,S}(x,y)$ is always bounded from above on $^c\Delta_{\ER^{d}\times\ER^{d} }$ so that 
the integrals with respect to $m\in {{{\cal P}^\star}}$ are well-defined. Also note that since $f'_\theta$ is positive, Assumption $(ii)$ holds in particular if there exists $\theta$ and $S$ such that {the $(S,\theta)$-confluence function $\Psi_{\theta,S}$ is negative on}  $^c{\Delta}_{\ER^{d}\times\ER^{d} }$.

\smallskip 
\noindent  $\rhd$ If we also assume in $(a)$, that $\displaystyle \Big(\frac{1}{t}\int_0^t \!P_s(x,dy)ds\Big)_{t\ge 1}$ is tight then, so is   $\displaystyle \Big(\frac{1}{t}\int_0^t\! Q_s(x,x',dy,dy')ds\Big)_{t\ge 1}$.  Since, by construction, the weak limiting distributions  of this sequence as $t\to +\infty$ are invariant distributions, it follows that  $\displaystyle  \frac{1}{t}\int_0^t \!Q_s(x,x',dy,dy')ds $ weakly converges to $\nu_\Delta$ as $t\to +\infty$. This motivates the ``weak confluence'' terminology.

\smallskip 
\noindent $\rhd$  It is natural to wonder if the assumptions for pathwise asymptotic confluence (claim $(b)$) are more stringent than Assumptions $(i)$ and $(ii)$. The fact that  $\Psi_{\theta,S}<0$ on $\ER^{d}\times\ER^{d} \setminus \Delta_{\ER^{d}\times\ER^{d} }$ implies Assumption $(ii)$ has  already been mentioned. One can also checks that  $\int_{0}^1e^{\int_v^1 \frac{\theta(w)}{w}dw}dv<+\infty$ implies Assumption $(i)$: one first derives from the Cauchy criterion that $\int_{0}^1e^{\int_v^1 \frac{\theta(w)}{w}dw}dv~<+\infty$ implies that 
$\int_{\frac{u}{2}}^u e^{\int_v^1 \frac{\theta(w)}{w}dw}dv\rightarrow0$ as $u\rightarrow 0^+$. Using that 
$v\mapsto e^{\int_v^1 \frac{\theta(w)}{w}dw}$ is non-increasing on $(0,1]$, it follows that $ue^{\int_u^1\frac{\theta(w)}{w}dw}\rightarrow0^+$ as $u\rightarrow0$. Taking the logarithm yields $\int_u^1\frac{\theta(w)-1}{w}dw\rightarrow -\infty$ and thus, Assumption $(i)$.

\medskip
\noindent$\rhd$ If $b$ and $\sigma$ are Lipschitz continuous, Kunita's Stochastic flow theorem (see \cite{KUN}, Section 4.5) ensures in particular that, if $x_1\neq x_2$, the solutions $X^{x_1}_t $ and $X^{x_2}_t$ $a.s.$ never get stuck. Taking advantage of this remark slightly shortens    the proof below.

\smallskip\noindent $\rhd$ Tightness  criterions of  $\displaystyle \Big(\frac{1}{t}\int_0^t P_s(x,dy)ds\Big)_{t\ge1}$ for every $x\!\in\ER^d$ usually rely on the mean-reversion property of the solutions of $(SDE)$ usually established under various assumptions involving the existence of a so-called {\em Lyapunov function} $V$ going to infinity at infinity and such that $\mathcal{A} V$ is upper-bounded and $\limsup_{|x|\to +\infty}\mathcal{A} V(x)<0 $ where $\mathcal{A}$ denotes  the infinitesimal generator of $X^x$ (so-called {\em Has'minskii's criterion}). Keep in mind that
\[
\mathcal{A} V(x) = (b|\nabla V)(x) +\frac 12 {\rm Tr} \Big(\sigma\sigma^*(x)D^2V(x)\Big)
\]
where ${\rm Tr} (A)$ stands for the trace of the matrix $A$.  

On the other hand, a classical criterion for pathwise  asymptotic confluence ($a.s.$ at exponential rate, see $e.g.$~\cite{basak}, \cite{lemairethese}  and often referred to as {\em asymptotic flatness}) is
\begin{equation}\label{explem}
\forall\, x,\, y\!\in \ER^d, \quad (b(x)-b(y)|x-y) +\frac 12 \|\sigma(x)-\sigma(y)\|^2 < -c |x-y|^2, \qquad c>0, 
\end{equation}
and, as a straightforward consequence, uniqueness of the invariant distribution $\nu$ of $(SDE)$ (and of $(DSDS)$ as well). Moreover, putting $y=0$ in the above inequality straightforwardly yields real coefficients $\alpha>0$, $\beta\ge 0$ such that $\mathcal{A}V\le \beta -\alpha V$ with $V(x) \!=\!|x|^2$. Hence Has'minskii criterion is fulfilled, so it is also an existence criterion for the invariant distribution.
{In fact, both weak and pathwise assumptions in Theorem \ref{thm:echellemultidim}  are much weaker than \eqref{explem} but some of the properties which hold under \eqref{explem} are still preserved. For instance, since the left-hand side of \eqref{explem} corresponds  to the   $(S,0)$-confluence function, we deduce from the criterions that if the $(S,0)$-confluence function is (only) negative on $^c\Delta_{\ER^{d}\times\ER^{d} }$, uniqueness of the invariant distribution $\nu$ of $(SDE)$ (and of $\nu_{\Delta}$ for ($DSDS)$) holds and,  combined with the tightness of the occupation measure of the semi-group, it becomes a criterion for $a.s.$ pathwise asymptotic confluence.}
\end{Remarque}

\noindent {\em Proof of Theorem~\ref{thm:echellemultidim}.} {\sc Step~1:}  Exactly like  in the beginning of the proof of Theorem~\ref{theoremhasm}$(a)$, one checks that the set ${\cal I}_{DSDS}$ of  invariant distributions of $(Q_t)_{t\ge0}$ is a nonempty,  convex and weakly compact subset of ${\cal P}(\ER^{d}\times\ER^{d} )$. 
As a  a consequence of the Krein-Millman theorem, ${\cal I}_{DSDS}$ has extremal distributions (and is their closed convex hull).

\smallskip On the other hand, 
it follows from strong uniqueness theorem  for SDE's that the semi-group $(Q_t)_{t\ge 0}$ leaves stable  the diagonal $\Delta_{\ER^{d}\times\ER^{d} }=\{(x,x),\, x\!\in \R^d\}$.  

Let $x_1,x_2\!\in \R^d$, $x_1\neq x_2$. We define the stopping time
\[
\tau_{x_1,x_2}:=\inf\big\{t\ge 0\,|\, X^{x_1}_t=X^{x_2}_t\big\}.
\] 
Still by a strong uniqueness argument it is clear that $\{\tau_{x_1,x_2}>t\}= \{X_t^{x_1}\neq X^{x_2}_t\}$ so that 
\[
Q_t((x_1,x_2), ^c\!\Delta_{\ER^{d}\times\ER^{d} })= \mbox{\bf 1}_{^c\!\Delta_{\ER^{d}\times\ER^{d} }}(x_1,x_2)\PE(\tau_{x_1,x_2}>t)
\]
and $Q_t((x_1,x_1), ^c\!\Delta_{\ER^{d}\times\ER^{d} })= 0$.

Let $\mu\!\in {\cal I}_{DSDS}$ be an extremal invariant measure. We have, for every $t\ge 0$,
\[
\mu (^c\!\Delta_{\ER^{d}\times\ER^{d} }) = \int_{^c\!\Delta_{\ER^{d}\times\ER^{d} }}\mu(dx_1,dx_2)\PE(\tau_{x_1,x_2}>t).
\]
Letting $t$ go to $+\infty$ yields
\[
\mu (^c\!\Delta_{\ER^{d}\times\ER^{d} }) = \int_{^c\!\Delta_{\ER^{d}\times\ER^{d} }}\mu(dx_1,dx_2)\PE(\tau_{x_1,x_2}=+\infty)
\]
so that, on $^c\!\Delta_{\ER^{d}\times\ER^{d} }$, $\mu(dx_1,dx_2)$-$a.s.$, $\PE(\tau_{x_1,x_2}=+\infty)
=1$ or equivalently the process $(X^{x_1},X^{x_2})$ lives in $^c\!\Delta_{\ER^{d}\times\ER^{d} }$. Consequently, if $\mu(^c\!\Delta_{\ER^{d}\times\ER^{d} })\!\in (0,1)$, both conditional measures $\mu^{^c\!\Delta_{\ER^{d}\times\ER^{d} }}$ and $\mu^{\Delta_{\ER^{d}\times\ER^{d} }}$ are invariant distributions for $(SDSD)$ as well. If so,
\[
\mu = \mu(^c\!\Delta_{\ER^{d}\times\ER^{d} })\mu^{^c\!\Delta_{\ER^{d}\times\ER^{d} }} +\mu(\Delta_{\ER^{d}\times\ER^{d} })\mu^{\Delta_{\ER^{d}\times\ER^{d} }}
\]
cannot be extremal. Consequently $\mu(\Delta_{\ER^{d}\times\ER^{d} })= 0$ or $1$.

\smallskip
\noindent {\sc Step~2:} Let $\mu$ be an extremal distribution  in ${\cal I}_{DSDS}$   and assume that  $\mu(^c\Delta_{\ER^{d}\times\ER^{d} })=1$ so that $\mu\in {{{\cal P}^\star}}$. We will prove that this yields a contradiction under Assumptions $(i)$ and $(ii)$.\\ 
Note that $f'_{\theta}$ and  $g_{\theta}$ defined in \eqref{def:pseu-sc} are positive on $(0,+\infty)$, that Assumption $(i)$ reads $\limsup_{u\rightarrow0^+} g_\theta(u)<+\infty$ and that  $g'_{\theta}(u) = f'_{\theta}(u)(1-\theta(u))$. Moreover, if Assumption $(ii)$ is fulfilled, so is the case for any continuous function $\widetilde \theta$ satisfying $\widetilde \theta\ge \theta$. As a consequence,  we may modify $\theta$ on $[1,+\infty)$ so that   $\theta$ still satisfies $(ii)$ and   $\theta\ge 1$ over $[2\varepsilon,+\infty)$. Then the function $g_{\theta}$ is   non-increasing   on $[2,+\infty)$. Consequently, without loss of generality, we may assume in the sequel of the proof that   
\begin{equation}\label{condf'}
\sup_{u>0} g_\theta(u)<+\infty.
\end{equation}

We now define a (Lyapunov) function $\varphi:\,^c\!\Delta_{\ER^{d}\times\ER^{d} }\rightarrow \ER$ by 
$$\varphi(y_1,y_2):=f_\theta(|y_1-y_2|^2_{_S}).$$
We know from Step~1  that $\mu(dx_1,dx_2)$-$a.s.$,  ($X^{x_1}_t\neq X^{x_2}_t$ for every $t\ge 0$) $a.s.$. Then, $f_\theta$ being a ${\cal C}^2$-function, we derive from  It\^o's formula applied to  $\varphi(X^{x_1}_t,X^{x_2}_t)$ that $\mu(dx_1,dx_2)$-$a.s.$,
\begin{multline*}
\varphi(X^{x_1}_t,X^{x_2}_t) = \varphi(x_1,x_2)+\int_0^t {\cal A}^{(2)}\varphi(X^{x_1}_s,X^{x_2}_s)ds \\
 +\underbrace{ \int_0^t f'_\theta(|X^{x_1}_s-X^{x_2}_s|^2_{_S})\big( (\sigma^*(X^{x_1}_s)-\sigma^*(X^{x_2}_s))S(X^{x_1}_s-X^{x_2}_s)|dW_s\big)}_{=:M_t \;\hbox{\footnotesize local martingale}}
\end{multline*}
where, for every $(x_1,x_2)\!\in (\R^d)^2$,  
\begin{multline}\label{eq:A2}
 {\cal A}^{(2)}\varphi(x_1,x_2) = 2\Big(  (b(x_1)-b(x_2)|x_1-x_2)_{_S} +\frac 12\|\sigma(x_1)-\sigma(x_2)\|_{_S}^2\Big)f'_{\theta}(|x_1-x_2|^2_{_S}) \\
	+ 2 f''_{\theta}(|x_1-x_2|^2_{_S}) \big|(\sigma^*(x_1)-\sigma^*(x_2))S(x_1-x_2)\big|^2.
\end{multline}
\noindent  Using that $f_{\theta}$ is increasing and  satisfies the $ODE\equiv \theta(\xi) f'_{\theta}(\xi)+\xi  f''_{\theta}(\xi)=0$, $\xi\!\in(0,+\infty)$, we deduce
that  
$$
{\cal A}^{(2)}\varphi(x_1,x_2)=2f'_\theta(|x_1-x_2|_{_S}^2)\Psi_{\theta,S}(x_1,x_2)
$$
so that $\displaystyle \int_{\R^d\times \R^d} {\cal A}^{(2)}\varphi(x_1,x_2)\mu(dx_1,dx_2)<0$ by Assumption $(ii)$.

On the one hand, since $\mu$ is extremal and  since  ${\cal A}^{(2)}\varphi$ is bounded from above (see Remark \ref{rem3.3}), we can apply  Birkhoff's theorem and obtain: 
\begin{equation}\label{eq:eq:calA2mu}
\mu(dx_1,dx_2)\mbox{-}a.s.,\quad \frac{1}{t}\int_0^t{\cal A}^{(2)}\varphi(X^{x_1}_s,X^{x_2}_s)ds\xrn{t\nrn}\int_{^c\Delta_{\ER^{d}\times\ER^{d} }} \hskip -0.5 cm {\cal A}^{(2)}\varphi \,d\mu\in[-\infty,0)\quad  a.s.
\end{equation}
On the other hand, using that $g_{\theta}$ is bounded and $\sigma$ is Lipschitz continuous, it follows that $(M_t)_{t\ge0}$ is an $L^2$-martingale such that
\begin{equation}\label{crochetmt2}
\left\langle  M\right\rangle_t=\int_0^t  g_\theta(|X^{x_1}_s-X^{x_2}_s|^2_{_S})^2\Big|\frac{(\sigma^*(X^{x_1}_s)-\sigma^*(X^{x_2}_s))}{|X^{x_1}_s-X^{x_2}_s|_{_S}}\frac{S(X^{x_1}_s-X^{x_2}_s)}{|X^{x_1}_s-X^{x_2}_s|_{_S}}\Big|^2ds\le C t\quad \PE_{\mu}\mbox{-}a.s.
\end{equation}
where $C$ is a deterministic positive constant so that $\frac{M_t}{t}\rightarrow0$  $\PE_{\mu}\mbox{-}a.s.$.

As a consequence, $\mu(dx_1,dx_2)$-$a.s.$,
$$
\lim_{t\rightarrow+\infty} \frac{\varphi(X^{x_1}_t,X^{x_2}_t)}{t}=\int_{^c\Delta_{\ER^{d}\times\ER^{d} }} {\cal A}^{(2)}\varphi d\mu<0\quad a.s..
$$
Hence,  $a.s.$,  $f_\theta(|X^{x_1}_t-X^{x_2}_t|^2_{_S})=\varphi(X^{x_1}_t,X^{x_2}_t)\xrn{t\nrn}-\infty\quad a.s.$

\noindent If  $\lim_{u\rightarrow 0^+} f_\theta(u)>-\infty$, this yields a contradiction since $f_\theta$ is increasing on $\ER_+^*$. Otherwise  
$|X^{x_1}_t-X^{x_2}_t|^2_{_S}\xrn{t\nrn}0$. But applying again Birkhoff's theorem, 
we obtain $\mu(dx_1,dx_2)$-$a.s.$,
$$
\int |y_1-y_2|^2_{_S}\mu(dy_1,dy_2)=\lim_{t\rightarrow+\infty}\frac{1}{t}\int_0^t|X^{x_1}_t-X^{x_2}_t|^2_{_S} ds=0\quad a.s.,
$$
which contradicts the assumption $\mu(^c\Delta_{\ER^{d}\times\ER^{d} })=1$. Consequently, for any extremal invariant  distribution $\mu$, we have $\mu(\Delta_{\ER^{d}\times\ER^{d} })=1$.

\smallskip  We can now prove Claim $(a)$:  by Krein-Millman's Theorem ${\cal I}_{SDS}$ is the weak closure of the convex hull of its extremal distributions. Consequently, the diagonal $\Delta_{\ER^{d}\times\ER^{d} }$ being a closed subset of $\ER^{d}\times\ER^{d} $, all invariant distributions of the duplicated system 
are supported by this diagonal. For any such invariant distribution $\mu$, both its marginals   are invariant distributions for $(SDE)$. If  $(SDE)$ had two distinct invariant distributions $\nu$ and $\nu'$, we know  from the introduction  that ${\cal I}_{DSDS}$ would contain at least a distribution  $\mu$ for which the two marginals distributions are $\mu( .\times \R^d)= \nu$ and $\mu( \R^d\times .)=\nu'$ respectively. As a consequence, such a distribution $\mu$ could not by supported by the diagonal $\Delta_{\ER^{d}\times\ER^{d} }$.
Finally, ${\cal I}_{SDE}$ is reduced to a singleton $\{\nu\}$ and ${\cal I}_{DSDS} =  \{\nu_{\Delta} \}$.

\smallskip
\noindent  {\sc Step~3 (Claim $(b)$: Proof of \eqref{confluence++}):} Under the additional assumption on $\theta$ of  $(b)$, we have
$\lim_{u\rightarrow0^+} f_\theta(u)>-\infty$ and thus, $\inf_{u>0} f_\theta(u)>-\infty$ since $f_\theta$ is increasing. Let $x_1$, $x_2\!\in \ER^d$. Using again that ${\cal A}^{(2)}\varphi<0$ on $^c\!\Delta_{\ER^{d}\times\ER^{d} }$ where ${\cal A}^{(2)}\varphi$ is given  by~(\ref{eq:A2}).
It follows  that $\big(f_\theta(|X^{x_1}_t-X^{x_2}_t|^2_{_S})\big)_{t\ge 0}$ is a lower-bounded $\PE$-supermartingale. Thus, it $a.s.$ converges toward $L^{x_1,x_2}_{\infty}\!\in L^1(\PE)$. Using again that $f_\theta$ is increasing, it follows that $|X^{x_1}_t-X^{x_2}_t|^2_{_S}$ $a.s.$ converges toward a finite random variable $\ell^{x_1,x_2}_{\infty}=f^{-1}_\theta(L^{x_1,x_2}_{\infty})$.

Now, using that for every $x\!\in\ER^d$, $\displaystyle\Big(\frac{1}{t}\int_0^t P_s(x,dy)ds\Big)_{t\ge 1}$ is tight, we derive that $\displaystyle\Big(\frac{1}{t}\int_0^t Q_s((x_1,x_2),(dy_1,dy_2))ds\Big)_{t\ge 1}$ is tight as well. Then the uniqueness of  $\nu_\Delta$ as an invariant distribution of $Q$  implies that 
\[
\frac 1t \int_0^t Q_s\big((x_1,x_2), (dy_1,dy_2)\big)ds \stackrel{(\ER^d)}{\Longrightarrow} \nu_{\Delta}.
\]
Now for every bounded continuous function $g:\ER^d\to \ER$,
\[
\frac 1t \int_0^t Q_s(g(|y_1-y_2|^2_{_S}))(x_1,x_2)ds = \frac 1t \int_0^t \ES\,g(|X^{x_1}_s-X^{x_2}_s|^2_{_S})ds\longrightarrow \ES\,g(\ell^{x_1,x_2}_\infty)
\]
so that 
\[
\ES\,g(\ell^{x_1,x_2}_\infty) = \int g(|y_1-y_2|^2_{_S})\nu_{\Delta}(dy_1,dy_2) = g(0).\qquad \Box
\]

In Assumption $(ii)$ of the previous theorem, we see that  the function $(x,y)\mapsto |(\sigma^*(x)-\sigma^*(y)) S (x-y)|$  plays an important role. In the sequel, we will obtain  specific results  when this function is not degenerated 
away from the diagonal. Such type of assumption will be called \textit{strong} or \textit{regular directional $S$-ellipticity assumption}.
 
\smallskip
In  the following proposition, we first show that when such an assumption is satisfied,  claim $(b)$ of the previous theorem  still holds without  the tightness assumption on $\big(\frac 1t\int_0^tP_s(x,dy)ds\big)_{t\ge 1}$ (although it is not really restrictive in our framework (see the fourth item of Remark \ref{rem3.3})).

\begin{prop} If the function $\theta$ is $(0,1]$-valued and $\sigma$ satisfies the following strong directional $S$-ellipticity assumption away from the diagonal
\begin{equation}\label{eq:ellip}
\exists\,\, \alpha_0>0,\; \forall\, x,\, y\!\in \R^d,\;  \big|(\sigma^*(x)-\sigma^*(y))S(x-y)\big|\ge \alpha_0|x-y|^2,
\end{equation}
then the conclusion of Claim $(b)$ in the above proposition remains true without the tightness assumption on $(\frac 1t\int_0^tP_s(x,dy)ds)_{t\ge 1}$.
\end{prop}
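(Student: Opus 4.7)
\emph{Plan.} I would revisit the proof of claim~$(b)$ of Theorem~\ref{thm:echellemultidim} and replace the step that invoked tightness of the occupation measures $\big(\frac{1}{t}\int_0^t P_s(x,dy)\,ds\big)$ by a Dambis--Dubins--Schwarz argument exploiting the strong directional ellipticity \eqref{eq:ellip}. Setting $Z_t := |X^{x_1}_t - X^{x_2}_t|_S^2$, the very same It\^o computation as in the proof of~$(b)$ yields
$$
f_\theta(Z_t) = f_\theta(Z_0) + A_t + M_t,
$$
where $A_t := 2\int_0^t f'_\theta(Z_s)\,\Psi_{\theta,S}(X^{x_1}_s, X^{x_2}_s)\,ds$ is non-positive and non-increasing (because $\Psi_{\theta,S}<0$), while $(M_t)_{t\ge 0}$ is a continuous local martingale of bracket
$$
\langle M\rangle_t = 4\int_0^t f'_\theta(Z_s)^2\,\big|(\sigma^*(X^{x_1}_s) - \sigma^*(X^{x_2}_s))\,S(X^{x_1}_s - X^{x_2}_s)\big|^2\,ds.
$$
The condition $\int_0^1 e^{\int_v^1 \theta(w)/w\,dw}\,dv < +\infty$ (still in force as in~$(b)$) implies $f_\theta(0^+) > -\infty$, so $f_\theta(Z_t)$ is a continuous supermartingale bounded below; by the martingale convergence theorem it converges $\PE$-a.s.\ to a finite limit, whence $Z_t$ converges $\PE$-a.s.\ to some $\ell_\infty\in [0,+\infty)$. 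The whole problem thus reduces to showing $\ell_\infty = 0$ $\PE$-a.s.

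The key new ingredient is a lower bound for $\langle M\rangle$ coming from \eqref{eq:ellip}. Combining \eqref{eq:ellip} with the elementary inequality $|u|^2\ge |u|_S^2/\lambda_{\max}(S)$ for $u\in\R^d$ yields
$$
\langle M\rangle_t \ge \frac{4\alpha_0^2}{\lambda_{\max}(S)^2}\int_0^t g_\theta(Z_s)^2\,ds,\qquad g_\theta(u):=uf'_\theta(u)>0 \text{ on } (0,+\infty).
$$
On the event $E := \{\ell_\infty>0\}$, $Z_s$ is eventually trapped inside a compact subset of $(0,+\infty)$, so by continuity and strict positivity of $g_\theta$ the integrand $g_\theta(Z_s)^2$ is eventually bounded below by a positive (random) constant; in particular $\langle M\rangle_\infty = +\infty$ on $E$. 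However, since $A_t\le 0$ and $f_\theta(Z_t)\ge f_\theta(0^+)$,
$$
M_t = f_\theta(Z_t) - f_\theta(Z_0) - A_t \ge f_\theta(0^+) - f_\theta(Z_0),
$$
which is a deterministic lower bound for $M$. On $\{\langle M\rangle_\infty = +\infty\}$ the Dambis--Dubins--Schwarz time change writes $M$ as a standard Brownian motion evaluated at time $\langle M\rangle_t$, which forces $\liminf_t M_t = -\infty$ and contradicts the previous lower bound. Therefore $\PE(E)=0$, and $X^{x_1}_t - X^{x_2}_t\to 0$ $\PE$-a.s.

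The main difficulty is to simultaneously secure the deterministic lower bound $\inf_t M_t > -\infty$ (which uses both $f_\theta(0^+)>-\infty$ and $A_t\le 0$) and the divergence of $\langle M\rangle$ on $E$ (which rests exclusively on \eqref{eq:ellip} and the positivity of $g_\theta$ on $(0,+\infty)$): these two facts come from essentially disjoint sets of hypotheses, and it is precisely their joint use through Dambis--Dubins--Schwarz that allows the tightness assumption of Theorem~\ref{thm:echellemultidim}(b) to be dispensed with. The restriction $\theta\in (0,1]$ plays no essential role in this contradiction scheme beyond ensuring that $g_\theta$ is non-decreasing on $(0,+\infty)$, which makes the lower bound on $\langle M\rangle$ particularly transparent.
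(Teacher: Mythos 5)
Your proof is correct and follows essentially the same route as the paper: both arguments rest on (i) the lower-bounded supermartingale $f_\theta(Z_t)$ forcing $Z_t\to\ell_\infty$ a.s., (ii) the deterministic lower bound on the local martingale $M_t$ coming from $f_\theta(0^+)>-\infty$ and $\mathcal A^{(2)}\varphi<0$, and (iii) the lower bound on $\langle M\rangle$ supplied by \eqref{eq:ellip} together with positivity of $g_\theta$. The paper argues forward (lower-bounded local martingale converges a.s., hence $\langle M\rangle_\infty<\infty$ a.s., hence $\int_0^\infty g_\theta(Z_s)^2\mathbf 1_{\{Z_s\ge\varepsilon\}}\,ds<\infty$, hence $\ell_\infty=0$), whereas you argue by contradiction on $\{\ell_\infty>0\}$ via Dambis--Dubins--Schwarz; these are two phrasings of the same underlying fact that a continuous local martingale bounded below must have a.s.\ finite bracket.
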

\begin{proof} First, we recall that under the assumptions of $(b)$, we recall that $(f_\theta(|X^{x_1}_t-X^{x_2}_t|^2_{_S}))_{t\ge0}$
 is a lower-bounded $\PE$-supermartingale thus convergent to an integrable random variable and that this implies that
 $(|X^{x_1}_t-X^{x_2}_t|^2_{_S})_{t\ge0}$ is $a.s.$ convergent to a finite random variable $\ell_\infty^{x_1,x_2}$ (since $f_\theta$ is increasing). On the other hand,  since $-{\cal A}^{(2)}\varphi$ is positive and $f_\theta$ is lower-bounded,  we also have that
\[
f_\theta(|X^{x_1}_t-X^{x_2}_t|^2_{_S})-\int_0^t{\cal A}^{(2)}\varphi(X^{x_1}_s,X^{x_2}_s)ds = \varphi(x_1,x_2)+M_t
\]
 is a lower bounded $\PE$-(local) martingale starting at a  deterministic starting value, hence converging toward an integrable random variable. Owing to the computations of \eqref{crochetmt2}  (which hold for every starting points $x_1$, $x_2$),   $(M_t)_{t\ge0}$ is in fact an $L^2$- convergent martingale. Thus, $\langle M\rangle_\infty<+\infty$ and taking advantage of the expression  of this bracket (see \eqref{crochetmt2}) and to Assumption \eqref{eq:ellip}, we derive that for every $\varepsilon>0$
 \[
 \int_0^{+\infty} g_{\theta}\big(|X^{x_1}_s-X^{x_2}_s|^2_{_S}\big)^2\mbox{\bf 1}_{\{|X^{x_1}_s-X^{x_2}_s|^2_{_S}\ge \varepsilon\}}ds<+\infty\quad a.s.
 \]
The function $g_\theta$ is positive on $(0,+\infty)$ and non-decreasing since $g'_{\theta}(u)=f'_{\theta}(u)(1-\theta(u))\ge 0$. This implies that, for every $\varepsilon>0$,
\[
\liminf_{t\to +\infty}g_{\theta}\big(|X^{x_1}_t-X^{x_2}_t|^2_{_S}\big)\mbox{\bf 1}_{\{|X^{x_1}-X^{x_2}|^2_{_S}\ge \varepsilon\}}=0\quad  a.s.
\]
Combined with the  convergence of the squared norm this yields
\[
\forall\, \varepsilon>0,\quad g_{\theta}\big(\ell_{\infty}^{x_1,x_2}\big)\mbox{\bf 1}_{\{\ell_{\infty}^{x_1,x_2}\ge \varepsilon\}}=0 \quad a.s.
\]
which finally implies $\ell_{\infty}^{x_1,x_2}=0$ $a.s.$
\end{proof}
\subsection{Global criterions, NILS exponent}\label{subsec:NILS}  

In this section and the following, we derive several corollaries of Theorem~\ref{thm:echellemultidim} illustrated by  different examples.

\begin{prop}\label{cor:GC} Let $S\in {\cal S}^{++}(d,\ER)$. Assume $[b]_{S,+}<+\infty$,  $\sigma$ is Lipschitz continuous and ${\cal I}_{SDS}$ is non empty and weakly compact.  Then,

\smallskip
\noindent $(a)$ If  Assumption $(ii)$ of Theorem \ref{thm:echellemultidim} holds with some continuous functions $\theta:(0,+\infty)\rightarrow\ER_+$ satisfying: there exists $\varepsilon_0>0$ such that $\theta(u) \le 1$, $u\!\in  (0, \varepsilon_0]$, then   $(SDE)$~(\ref{EDS}) and its duplicated system $(DSDS)$ have $\nu$ and $\nu_{\Delta}$ as unique invariant distributions respectively. 

\smallskip
\noindent $(b)$ If for every $x\!\in\ER^d$, $(\frac{1}{t}\int_0^t P_s(x,dy)ds)_{t\ge1}$ is tight and if $\Psi_{\theta,S}<0$ on $\Delta_{\ER^{d}\times\ER^{d} }^c$ with  a continuous function $\theta:(0,+\infty)\rightarrow\ER_+$ satisfying: there exists $\kappa>1$ and $\varepsilon_0\!\in (0, e^{-\frac{\kappa}{2}})$ such that 
\begin{equation}\label{eq:kappa>1}
\forall\, u\!\in(0,\varepsilon_0], \quad \theta (u)\le \Big(1+\frac{\kappa}{\log u}\Big),
\end{equation}
then the duplicated system of $(SDE)$ is pathwise confluent in the sense of Theorem~\ref{thm:echellemultidim}$(b)$. This condition is in particular satisfied if there exists $\varepsilon_0>0$ and $\theta_0\!\in (0,1)$ such that 
\[
\forall\, u\!\in(0,\varepsilon_0], \quad \theta (u)\le \theta_0.
\]
\end{prop}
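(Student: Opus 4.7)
The plan is to recognize Proposition~\ref{cor:GC} as a direct consequence of Theorem~\ref{thm:echellemultidim}, so the work reduces to verifying the analytic conditions on the pseudo-scale function $f_\theta$, i.e.\ bounds on logarithmic integrals of $\theta$ near~$0$. Parts (a) and (b) each correspond to one such verification.

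For part~(a), Assumption~$(ii)$ of Theorem~\ref{thm:echellemultidim} is already granted, so I would only verify Assumption~$(i)$: $\limsup_{u\to 0^+}\int_u^1 (\theta(w)-1)/w\,dw<+\infty$. Since $\theta(w)-1\le 0$ on $(0,\varepsilon_0]$, the integrand is non-positive there, hence for $u<\varepsilon_0$,
\[
\int_u^1 \frac{\theta(w)-1}{w}\,dw \;\le\; \int_{\varepsilon_0}^1 \frac{\theta(w)-1}{w}\,dw,
\]
and the right-hand side is finite by continuity of $\theta$ on $[\varepsilon_0,1]$. The conclusion of (a) is then immediate from Theorem~\ref{thm:echellemultidim}(a).

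For part~(b), the sign and tightness hypotheses are given, so only the integrability $\int_0^1 e^{\int_v^1 \theta(w)/w\,dw}\,dv<+\infty$ remains. I would split the exponent at $\varepsilon_0$: the integral over $[\varepsilon_0,1]$ produces a harmless multiplicative constant by continuity, and on $(v,\varepsilon_0]$ I substitute the bound $\theta(w)\le 1+\kappa/\log w$. Using $\int dw/(w\log w)=\log|\log w|$, this gives
\[
\int_v^{\varepsilon_0}\frac{\theta(w)}{w}\,dw \;\le\; \log\frac{\varepsilon_0}{v} + \kappa\log\frac{|\log \varepsilon_0|}{|\log v|},
\]
so that $e^{\int_v^1 \theta(w)/w\,dw}\le C\,v^{-1}|\log v|^{-\kappa}$ for $v\in(0,\varepsilon_0]$. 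The substitution $u=|\log v|$ turns $\int_0^{\varepsilon_0} v^{-1}|\log v|^{-\kappa}\,dv$ into the Bertrand integral $\int_{|\log\varepsilon_0|}^\infty u^{-\kappa}\,du$, which converges precisely because $\kappa>1$. Theorem~\ref{thm:echellemultidim}(b) then yields the pathwise confluence. The side constraint $\varepsilon_0<e^{-\kappa/2}$ simply ensures $|\log\varepsilon_0|>\kappa/2$, which is convenient to keep the upper bound $1+\kappa/\log w$ in a quantitatively controlled range on $(0,\varepsilon_0]$.

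For the final assertion, assume $\theta(u)\le\theta_0<1$ on $(0,\varepsilon_0]$. Pick any $\kappa>1$; since $\kappa/\log u\to 0^-$ as $u\to 0^+$, the quantity $1+\kappa/\log u$ converges to $1$ from below and hence eventually exceeds $\theta_0$, specifically as soon as $u\le e^{\kappa/(\theta_0-1)}$. Replacing $\varepsilon_0$ by $\min(\varepsilon_0,e^{\kappa/(\theta_0-1)},e^{-\kappa/2})$ then produces~\eqref{eq:kappa>1}, reducing the final claim to the previous criterion. The main obstacle is the logarithmic integrability estimate in (b), where the sharp threshold $\kappa>1$ of a Bertrand integral makes it transparent why the bound $\theta\le 1$ of (a) suffices for uniqueness but not, in general, for pathwise confluence.
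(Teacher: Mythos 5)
Your proof is correct and follows the same route the paper takes: both parts reduce to checking the analytic conditions on $f_\theta$ required by Theorem~\ref{thm:echellemultidim}. The paper dispatches part~(a) as ``obvious'' and states for part~(b) only that $\int_0^1 e^{\int_v^1\theta(w)/w\,dw}\,dv<+\infty$ holds whenever $\liminf_{u\to 0^+}\log(u)\big(\theta(u)-1\big)>1$; your explicit Bertrand-integral computation (bounding $e^{\int_v^1\theta(w)/w\,dw}$ by $C\,v^{-1}|\log v|^{-\kappa}$ and substituting $u=|\log v|$) is precisely the calculation the paper leaves implicit, and your reduction of the final $\theta\le\theta_0<1$ case to~\eqref{eq:kappa>1} is the intended one.
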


\begin{proof} Claim~$(a)$ is obvious. As for~$(b)$, one checks that $\int_0^1e^{\int_{v}^1 \frac{\theta(w)}{w}dw}dv<+\infty$ as soon as $\liminf_{u\to 0^+} \log (u) \big(\theta(u) - 1 \big)>1$ and the result follows.
\end{proof}
\begin{Remarque}\label{remtheta=0}
 The simplest case where the preceding result holds is obtained when $\theta\equiv 0$. In this case, the weak confluence condition, referred to as $(S,0$)-confluence in what follows, reads:
\begin{equation}\label{asymp-conf1}
\forall m\!\in {{{\cal P}^\star}},\; \int_{\R^d\times\R^d} (b(x)-b(y)|x-y)_{_S} +\frac 12\|\sigma(x)-\sigma(y)\|^2_{_S}m(dx,dy)<0
\end{equation}
and claim $(b)$ holds as soon as the integrated function is negative on $^c \Delta_{\ER^{d}\times\ER^{d} }$.
\end{Remarque}
 
At this stage, it is important for practical applications to note that {\em the constant function  $\theta \equiv 1$  satisfies the assumption in $(a)$ of the above Proposition}. This leads us to introduce  an important quantity of interest for our purpose.
\begin{definition}  The {\em non-infinitesimal $S$-Lyapunov (NILS)  exponent} is a function on $\ER^{d}\times\ER^{d} \setminus\Delta_{\ER^{d}\times\ER^{d} }$ defined    for every 
$x,\, y\!\in \R^d, \; x\neq y$, by 
\begin{equation*}
	\hskip -0.5 cm\Lambda_{_S}(x, y) = \frac{(b(x)-b(y)|x-y)_{_S}}{|x-y|^2_{_S}}	+  \frac{1}{2}\frac{\|\sigma(x)-\sigma(y)\|^2_{_S}}{|x-y|^2_{_S}} - \Biggl(
	\Big| \frac{(\sigma^*(x)-\sigma^*(y))S(x-y)}{|x-y|^2_{_S}}\Big|^2\Biggr).
\end{equation*}
\end{definition}
\begin{Corollaire}\label{Cor:lyap} Assume  $b$ and $\sigma$ are like in Proposition~\ref{cor:GC} and ${\cal I}_{SDE}$ is non empty and weakly compact.

\smallskip
\noindent $(a)$  {\em Negative {Integrated} NILS exponent}: if
\begin{equation}\label{eq:NILSmneg}
\forall m\in {{{\cal P}^\star}},\quad \quad \int _{\R^d}\Lambda_{_S}(x,y) m(dx,dy)<0,
\end{equation}
then $(SDE)$ and its duplicated system $(DSDS)$ have $\nu$ and $\nu_{\Delta}$ as unique invariant distributions respectively.

\smallskip 
\noindent $(b)$  {\em Negative  NILS exponent  bounded away from $0$}: If furthermore $\big(\frac{1}{t}\int_0^t P_s(x,dy)ds\big)_{t\ge1}$ is tight for every $x\!\in\ER^d$  or $\sigma $ satisfies~(\ref{eq:ellip}) and if there exists $c_0>0$ such that
\begin{equation}\label{negNILS}
\forall\, x,\, y\!\in\R^d, \; x\neq y,\; |x-y|^2_{_S}\le \varepsilon_0 \Longrightarrow  \Lambda_{_S}(x,y)\le  -c_0
\end{equation}
then the duplicated diffusion is pathwise confluent $i.e.$
\[
\forall\, x_1,x_2\!\in \R^d,\quad X^{x_1}_t-X^{x_2}_t \longrightarrow 0 \quad a.s.\quad \mbox{ as }t\to+\infty.
\]
\end{Corollaire}
\begin{proof}$(a)$  follows from claim~$(a)$ in the above proposition with $\theta\equiv 1$ since $u f'_{\theta}(u) \equiv 1$ on $(0,+\infty)$ so that ${\cal A}^{(2)}\varphi(x,y) = 2 \Lambda_{_S}(x,y)$ in the proof of Theorem~\ref{thm:echellemultidim}. $(b)$ follows from claim~$(b)$ in the same proposition.
\end{proof}

\begin{Remarque}\label{rem3.5} $\rhd$ It is obvious that~\eqref{eq:NILSmneg} is satisfied, $i.e.$ the integrated NILS (INILS) exponent is negative for every $m \! \in {{{\cal P}^\star}}$,  as soon as the NILS exponent  itself is negative on $^c \Delta_{\ER^{d}\times\ER^{d}}$.  
This pointwise negativity  may appear as the only checkable condition for practical applications, but so is not the case and we will see in the next subsections that we can devise  criterions when $\Lambda_{_S}$ is not negative everywhere.

\smallskip
\noindent {$\rhd$ Let us assume that {$\nu$ is unique, that} $\nu\otimes\nu \!\in{\cal P}_{\nu,\nu}^\star$ (for instance, so is the case if $\nu$ is atomless) and that $b$ and $\sigma$ are such that for all $x\neq y$, for al $\nu\otimes\nu \!\in{\cal P}_{\nu,\nu}^\star$ (for instance, so is the case if $\nu$ is atomless) and that $b$ and $\sigma$ are such that for all $x\neq y$, for all $t\ge0$,  $\PE(X_t^x\neq X_t^y)=1$ (see the fourth item of  Remark~\ref{rem3.3} for comments on this topic). Then, for each $t > 0$, one easily checks that the probability  measure $\mu_t^{(0)}$ defined in \eqref{loiind1} belongs to ${\cal P}_{\nu,\nu}^\star$. {Furthermore, if \eqref{eq:NILSmneg} holds, $(\mu_t^{(0)})_{t\ge1}$ converges weakly to $\nu_\Delta$ since this family is weakly compact (see the first item of Remark~\ref{rem3.3}) and $\nu_{\Delta}$ is its only possible limiting distribution. On the other hand, the function $\Lambda_{_S}$ being continuous and upper-bounded on $^c\Delta_{\ER^d\times\ER^d}$,  can be extended  on the diagonal  into a lower semi-continuous (l.s.c.) function on $\ER^d\times\ER^d$ (with values in $\ER\cup\{-\infty\}$). Let   $\underline{\Lambda}_{_S}$ denote its  l.s.c. envelope. { Temporarily assume that $b$ and $\sigma$ are Lipschitz continuous so that $\underline \Lambda_{_S}$ is bounded. Then applying Fatou's  Lemma in distribution to  $\underline{\Lambda}_{_S}$,  it follows from~\eqref{eq:NILSmneg} that
\begin{equation}\label{prescriterediag}
\int \underline{\Lambda}_{_S}(x,x)\nu(dx)=\int \underline{\Lambda}_{_S}(x,y)\nu_{\Delta}(dx,dy)\le \liminf_{t\rightarrow+\infty} \int\underline{\Lambda}_{_S}(x,y)\mu_t^{(0)}(dx,dy)\le 0.
\end{equation}
}
The interesting point is that the left-hand side of~\eqref{prescriterediag} is an integral with respect to $\nu$ and can be seen as a necessary condition for the criterion~\eqref{eq:NILSmneg}.} {In fact the result still holds if    $[b]_{S,+}\!<\!+\infty$ {\em mutatis mutandis} and there exists  a continuous  function  $\ell \!\in L^{1}(\nu)$ such that  
\begin{equation}\label{eq:Lambdalsc}
  \forall\, x,\, y\!\in \R^d\times \R^d,\; \Lambda^-_{_S}(x,y) \le \ell(x)+\ell(y)
\end{equation}
(where, for a function $f$, $f^{\pm}= \max(\pm f,0)$).
}

{Furthermore, when $b$ and $\sigma$ are continuously differentiable, one derives from a Laplace-Taylor expansion  (integral remainder) that  $\underline{\Lambda}_{_S}(x,x)$ reads for every $x\!\in \R^d$:
\begin{equation}\label{formellsc}
\underline{\Lambda}_{_S}(x,x)= \frac 12\inf_{|u|_{_S}=1}     \left(u^*\left( SJ_b(x)+J_b^*(x)S\right)u +  \Big\|(\nabla\sigma(x) |u)\Big\|^2_{_S} - 2 \Big|(\nabla \sigma^*(x)Su|u)\Big|^2\right).
\end{equation}
Thus,~\eqref{prescriterediag} can be read as a checkable necessary condition  for  the criterion~\eqref{eq:NILSmneg}}. We will come back
on this condition in  Subsection~\ref{OptiTrans}.}

\smallskip  
\noindent $\rhd$ In~$(b)$, Condition \eqref{negNILS} can be replaced by the sharper: for all $x,y\in\ER^d$, $\Lambda_{_S}(x,y)<0$ and there exists $\kappa>1$ and 
$\varepsilon_0\!\in (0,e^{-\frac{\kappa}{2}})$ such that for all $x,y\in\ER^d$ such that
$|x-y|_S\le\varepsilon_0$, $\displaystyle \Lambda_{_S}(x,y)\le \frac{\kappa}{\log(|x-y|_{_S}^2)}$.
\end{Remarque}

\medskip
\noindent {When the coefficients are smooth enough, the negativity of $\Lambda_{_S}$ can be ensured by the following criterion:
\begin{Corollaire}[Smooth coefficients]  \label{Cor:lypadiff} Assume  the functions $b$ and $\sigma$ are 
 {\em continuously} differentiable.  Let  $J_b (x)=\left[\frac{\partial b_i}{\partial x_j}(x)\right]_{1\le i,j\le d}$ denote the Jacobian of $b$ at $x$ and let
 $\nabla \sigma (x)=\Big[\frac{\partial \sigma_{ij}}{\partial x^k}(x)\Big]_{i,j,k}$ denote the gradient of $\sigma$ at $x$. If both  $SJ_b+J^*_bS$ and $\nabla \sigma$ are {\em Lipschitz continuous} and if  $\nabla \sigma$ is bounded then $\Lambda_{_S}(x,y)\le  -c_0$ on $^c \Delta_{\ER^d\times\ER^d}$ if 
 \[
\sup_{x\in \ER^d}\sup_{|u|_{_S}=1}     \left(u^*\left( SJ_b(x)+J_b^*(x)S\right)u +  \Big\|(\nabla\sigma(x) |u)\Big\|^2_{_S} - 2 \Big|(\nabla \sigma^*(x)Su|u)\Big|^2\right)<0
\]
where, for every $v=(v^1, \ldots,v^d)\!\in \R^d$, $\displaystyle (\nabla\sigma(x) |v)= \Big[(\nabla \sigma_{ij}(x)|v)\Big]_{1\le i,j\le d}$ and   $\displaystyle \nabla \sigma(x)  v=\left[\sum_{k=1}^d \frac{\partial \sigma_{ij}}{\partial x_k}(x)v^k\right]_{1\le i,j\le d}$.
When  $S=I_d$, this may also be written 
\[
\sup_{x\in \ER^d}\sup_{|u|=1} \left(u^*\Big((J_b +J^*_b)(x) +\sum_{ i,j}(\nabla \sigma_{ij}(x))^{\otimes 2} -2 \Big[(\nabla\sigma_{ij}(x)|u)\Big]\Big[(\nabla\sigma^*_{ij}(x)|u)\Big]\Big)u\right)<0.
\]
\end{Corollaire}

The proof is again an easy consequence of the Laplace-Taylor formula. Computational   details  are left  to the reader.}

\subsection{Applications and extensions}\label{App&Ext}

\subsubsection{Localization around the diagonal} 
By local we mean that the confluence condition will be effective only   in the neighbourhood of the diagonal $\Delta_{\ER^{d}\times\ER^{d} }$. The price to pay is a regular  directional ellipticity assumption on $\sigma(x)-\sigma(y)$ in the direction $S(x-y)$ away from the diagonal.
\begin{prop}  \label{prop_principale} Assume $[b]_{S,+}<+\infty$,  $\sigma$ is Lipschitz continuous and $(SDE)$ admits at least one invariant distribution $\nu$. If there exists  $\varepsilon_0>0$ such that
\[
   \left\{\begin{array}{l}
   (i) \;\hbox{\it Directional $S$-ellipticity: }\;\eta_0:= \displaystyle \inf\Big\{\big|(\sigma^*(x)-\sigma^*(y))S(x-y)\big|, \; |x-y|_{_S}\ge \varepsilon_0\Big\} >0,\\
   (ii)\;\;\hbox{\it Locally negative INILS exponent: } \forall\,m\!\in {{{\cal P}^\star}}, \;\displaystyle \int_{|x-y|_{_S}\le \varepsilon_0}\Lambda_{_S}(x,y)m(dx,dy)  <0,

\end{array}\right.
\]
then $(SDE)$~(\ref{EDS}) and its duplicated system still have $\nu$ and $\nu_{\Delta}$ as unique invariant distributions respectively. 

\end{prop}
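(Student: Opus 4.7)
My plan is to apply Theorem~\ref{thm:echellemultidim}(a), constructing for each $m\in{\cal P}_{\nu,\nu}^\star$ a continuous function $\theta=\theta_m:(0,+\infty)\to\R_+$ equal to $1$ on $(0,\varepsilon_0^2]$ and growing rapidly on $[\varepsilon_0^2,+\infty)$. The identity $\theta\equiv 1$ near $0$ automatically ensures condition~(i) of Theorem~\ref{thm:echellemultidim} (see Proposition~\ref{cor:GC}(a)) and reduces the ``near'' part of the integrand to a positive constant multiple of $\Lambda_{_S}$; the growth on $[\varepsilon_0^2,+\infty)$ is designed so that the directional $S$-ellipticity forces the ``far'' part of the integrand to be non-positive.

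Let $K$ be a finite constant such that $(b(x)-b(y)|x-y)_{_S}+\tfrac12\|\sigma(x)-\sigma(y)\|_{_S}^2\le K|x-y|_{_S}^2$ for all $x,y\in\R^d$ (which exists since $[b]_{S,+}<+\infty$ and $\sigma$ is Lipschitz). Split
\[
I(\theta):=\int f'_\theta(|x-y|_{_S}^2)\,\Psi_{\theta,S}(x,y)\,m(dx,dy)
\]
into the near contribution over $N:=\{|x-y|_{_S}\le\varepsilon_0\}$ and the far contribution over $F:={}^c\!N$. Since $\theta\equiv 1$ on $(0,\varepsilon_0^2]$, the computation in the proof of Theorem~\ref{thm:echellemultidim} gives $\Psi_{\theta,S}(x,y)=|x-y|_{_S}^2\Lambda_{_S}(x,y)$ on $N$ while $uf'_\theta(u)$ equals a positive constant $C_\theta$ there, so the near contribution is $C_\theta\int_N\Lambda_{_S}\,dm$, strictly negative by hypothesis~(ii) of the proposition. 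On $F$, combining the above bound with the directional $S$-ellipticity $|(\sigma^*(x)-\sigma^*(y))S(x-y)|\ge\eta_0$ yields
\[
\Psi_{\theta,S}(x,y)\le K|x-y|_{_S}^2-\theta(|x-y|_{_S}^2)\,\eta_0^2/|x-y|_{_S}^2,
\]
which is $\le 0$ as soon as $\theta(u)\ge Ku^2/\eta_0^2$.

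The main obstacle is the continuous matching at $u=\varepsilon_0^2$: in the generic case $\eta_0^2<K\varepsilon_0^4$, the requirements $\theta(\varepsilon_0^2)=1$ and $\theta(u)\ge Ku^2/\eta_0^2$ on $[\varepsilon_0^2,+\infty)$ are incompatible in a right neighborhood of $\varepsilon_0^2$, leaving a small transition zone where $\Psi_{\theta,S}$ may still be positive. I would circumvent this by introducing a scaling parameter $\lambda>0$ in the definition of $\theta$, for instance $\theta_\lambda(u):=1$ on $(0,\varepsilon_0^2]$ and $\theta_\lambda(u):=1+\lambda(u^2-\varepsilon_0^4)$ on $[\varepsilon_0^2,+\infty)$, and letting $\lambda\to+\infty$: the transition zone shrinks to the sphere $\{|x-y|_{_S}=\varepsilon_0\}$ at rate $O(\lambda^{-1})$, while $C_{\theta_\lambda}$ grows exponentially in $\lambda$, so that for $\lambda$ sufficiently large the (exponentially amplified) negative near contribution dominates the transition contribution, making $I(\theta_\lambda)<0$. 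Theorem~\ref{thm:echellemultidim}(a) then delivers the uniqueness of $\nu$ for $(SDE)$ and of $\nu_\Delta$ for the duplicated system.
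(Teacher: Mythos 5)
Your strategy — apply Theorem~\ref{thm:echellemultidim}(a) with a one-parameter family $\theta$ equal to $1$ near $0$, growing fast enough at infinity so that directional $S$-ellipticity forces the far contribution to be non-positive, and then shrinking the transition region — is exactly the paper's proof. The paper parametrizes by the transition endpoint $\varepsilon'_0 \downarrow \varepsilon_0$ (with linear interpolation on $(\varepsilon_0,\varepsilon'_0)$ and $\theta(u)\propto u^4/\eta_0^2$ on $(\varepsilon'_0,\infty)$), you parametrize by the slope $\lambda\to\infty$; these are morally the same device, and your near-region computation $uf'_\theta(u)=C_\theta$, $\Psi_{1,S}=|x-y|_{_S}^2\Lambda_{_S}$ is correct.

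There is however a conceptual slip in your closing sentence that is worth correcting. The exponential growth of $C_{\theta_\lambda}$ does \emph{not} help: on the transition zone $T_\lambda$ you have $f'_{\theta_\lambda}\le f'_{\theta_\lambda}(\varepsilon_0^2)=C_{\theta_\lambda}/\varepsilon_0^2$, so the transition contribution is $\le C_1\,C_{\theta_\lambda}\,m(T_\lambda)/\varepsilon_0^2$ and is thus \emph{also} amplified by $C_{\theta_\lambda}$. Moreover $m(T_\lambda)$ tends to $0$ only by monotone convergence as $T_\lambda\downarrow\emptyset$, not at rate $O(\lambda^{-1})$ (that rate describes the Lebesgue width, not the $m$-measure, and $m$ is arbitrary). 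The argument actually closes because $C_{\theta_\lambda}$ \emph{factors out}: $I(\theta_\lambda)\le C_{\theta_\lambda}\bigl(\int_N\Lambda_{_S}\,dm + C_1\,m(T_\lambda)/\varepsilon_0^2\bigr)$, and the bracket becomes strictly negative once $m(T_\lambda)$ is small enough, thanks to hypothesis~(ii). The paper sidesteps this subtlety entirely by keeping $\theta$ (and hence $f'_\theta$ on the transition zone) bounded uniformly in the interpolation parameter, so that the integrand itself is bounded uniformly and the transition term is directly controlled by $m(T)\to 0$; your construction reaches the same conclusion but requires the cancellation of $C_{\theta_\lambda}$ to be made explicit.
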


\noindent {\em Proof.}  Owing to $(i)$, we have for every $u\!\in (\varepsilon_0,+\infty)$:
\[
\sup_{|x-y|_{_S}= u}\frac{(b(x)-b(y)|x-y)_{_S} +\frac 12\|\sigma(x)-\sigma(y)\|^2_{_S}}{ \Big|(\sigma^*(x)-\sigma^*(y))S\frac{x-y}{|x-y|_{_S}}\Big|^2.}\le \big([b]_{+} +\frac 12 [\sigma]^2_{\rm Lip}\big)\frac{u^4}{\eta^2_0}. 
\]

For every $\varepsilon'_0>\varepsilon_0$, let ${\theta}_{\varepsilon'_0}:(0,+\infty)\rightarrow\ER_+$ denote the continuous function defined by 
\begin{equation*}
\theta_{\varepsilon'_0} (u) =
\begin{cases}
	\big([b]_{S,+} +\frac 12 [\sigma]^2_{\rm Lip}\big)\frac{u^4}{\eta^2_0} &\text{if } u\!\in (\varepsilon'_0,\infty),\\
 1 &\text{if }u\!\in (0,\varepsilon_0],\\
 1+{\theta_{\varepsilon'_0}({\varepsilon'_0})}\frac{t-\varepsilon_0}{\varepsilon'_0-\varepsilon_0}  &\text{if }u\!\in (\varepsilon_0,\varepsilon'_0).
\end{cases}
\end{equation*}
Since $\theta_{\varepsilon'_0}(u)=1$ in the neighbourhood of  $0$, Assumption $(i)$ of Theorem~\ref{thm:echellemultidim} is satisfied. For Assumption $(ii)$,
one first deduces from the construction and to the first assumption that, 
$$\forall \, \varepsilon'_0>\varepsilon_0,\;\forall x,y\; \text{such that }|x-y|_{_S}\ge \varepsilon'_0,\quad \Psi_{\theta_{\varepsilon'_0},S}(x,y)\le 0.$$
Using that $\theta_{\varepsilon'_0}(u)=1$ on $(0,\varepsilon_0]$, it follows that  for all $m\in {{{\cal P}^\star}}$,
$$\int_{\ER^{d}\times\ER^{d} } f'_\theta({|x-y|^2_{_S}})\Psi_{\theta_{\varepsilon'_0},S}(x,y)m(dx,dy)\le \int_{|x-y|_{_S}\le \varepsilon_0}\Lambda_{_S}(x,y)m(dx,dy)+I_2(\varepsilon'_0)$$
where 
$$I_2(\varepsilon'_0)=\int_{\varepsilon_0<|x-y|_{_S}< \varepsilon'_0}f'_\theta({|x-y|^2_{_S}})\Psi_{\theta_{\varepsilon'_0},S}(x,y)m(dx,dy).$$
Since the integrated function is bounded from above on $\{(x,y),\varepsilon_0<|x-y|< \varepsilon'_0\}$, we deduce that 
$$I_2(\varepsilon'_0)\le C \,m(\varepsilon_0<|x-y|< \varepsilon'_0)\xrn{\varepsilon'_0\rightarrow \varepsilon_0}0.$$
By the second assumption of this proposition, it follows that there exists $\varepsilon'_0>\varepsilon_0$ such that
$$\int_{\ER^{d}\times\ER^{d} }f'_\theta({|x-y|^2_{_S}})\Psi_{\theta_{\varepsilon'_0},S}(x,y)m(dx,dy)<0$$
so that Assumption $(ii)$ of Theorem~\ref{thm:echellemultidim} holds. This completes the proof.

$\hfill \Box$

\subsubsection{Local criterion on compact sets}  As mentioned in Remark \ref{remtheta=0}, Theorem \ref{thm:echellemultidim} can be applied under the $(S,0)$-confluence condition~\eqref{asymp-conf1}. This condition is in particular satisfied when 
\begin{equation*}
\forall\, x,\, y\!\in \ER^d,\; x\neq y, \quad (b(x)-b(y)|x-y)_{_S} +\frac 12\|\sigma(x)-\sigma(y)\|_{_S}^2<0.
\end{equation*}

One asset of this more stringent assumption is that it  can be localized in two ways: first in the neighbourhood of  the diagonal  like in the above local criterions, but also on compacts sets of $\ER^{d}\times\ER^{d} $. This naturally leads to a criterion based on the differentials of $b$ and $\sigma$ when they exist.
 
\begin{prop}[Criterion on compact sets]\label{quadralocal} $(a)$ Let $S\in{\cal S}^{++}(d,\ER)$ such that for every $R>0$, there exists $\delta_R>0$ such that $\forall\, x,y\!\in B _{|.|_{_S}}(0;R)$,
\begin{equation}\label{NotsoWAC}
  0<|x-y|_{_S}\le \delta_R \;\Longrightarrow\; (b(x)-b(y)|x-y)_{_S} +\frac 12\|\sigma(x)-\sigma(y)\|_{_S}^2<0.
\end{equation}
Then the diffusion is asymptotically $(S,0)$-confluent.

\smallskip
\noindent $(b)$ If $b$ and $\sigma$ are continuously differentiable, then~(\ref{NotsoWAC}) holds as soon as 
\[
	(AC)^{\hbox{diff}}\equiv \forall\, x\!\in \ER^d,\quad SJ_b(x)+J^*_b (x)S+ \sqrt{S}\sum_{i,j}(\nabla \sigma_{ij}(x))^{\otimes 2} \sqrt{S}<0\;\mbox{ in } \;{\cal S}(d,\ER).
\]
\end{prop}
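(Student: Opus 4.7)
The plan for part~$(a)$ is to apply Theorem~\ref{thm:echellemultidim} with the choice $\theta \equiv 0$, for which $f_0(u) = u-1$, $g_0(u) = u$, Assumption~$(i)$ of that theorem holds trivially (since $\theta \le 1$ near $0^{+}$), and the integrand $f_0'(|x-y|_{_S}^2)\Psi_{0,S}(x,y)$ of Assumption~$(ii)$ collapses to the $(S,0)$-confluence function
\begin{equation*}
\phi(x,y) := (b(x)-b(y)|x-y)_{_S} + \tfrac{1}{2}\|\sigma(x)-\sigma(y)\|_{_S}^2.
\end{equation*}
Rather than verifying $\int \phi\,dm < 0$ globally for every $m \in \mathcal{P}_{\nu,\nu}^{\star}$, I would reprise Step~2 in the proof of Theorem~\ref{thm:echellemultidim}: suppose $\mu$ is an extremal invariant distribution of $(Q_t)_{t \ge 0}$ with $\mu(^c\Delta_{\ER^d \times \ER^d}) = 1$. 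It\^o's formula applied to $V_t := |X^{x_1}_t - X^{x_2}_t|_{_S}^2$ yields
\begin{equation*}
V_t = V_0 + 2\int_0^t \phi(X^{x_1}_s, X^{x_2}_s)\,ds + M_t,
\end{equation*}
with $M$ a continuous martingale whose bracket $\langle M\rangle_t \le Ct$ thanks to the Lipschitz property of $\sigma$, hence $M_t/t \to 0$ a.s. Combined with the stationarity of $V_t$ under $\mu$ (which yields $V_t/t \to 0$ in probability) and Birkhoff's theorem applied to $\phi$, this forces $\int \phi\,d\mu = 0$.

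The contradiction then exploits \eqref{NotsoWAC}: since $\mu$ has marginals $\nu$ and $\nu$ is tight, $\mu$ charges $B_{|.|_{_S}}(0,R)^2$ for $R$ large enough. Invoking path continuity of $(X^{x_1}_t, X^{x_2}_t)$ together with the stationarity of $V_t$, the measure $\mu$ cannot be supported uniformly away from the diagonal and must assign positive mass to the slab $B_{|.|_{_S}}(0,R)^2 \cap \{0 < |x-y|_{_S} \le \delta_R\}$, on which $\phi < 0$. This yields $\int \phi \, d\mu < 0$, contradicting the previous equality, and hence ruling out the existence of such a $\mu$.

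For part~$(b)$, I would use a first-order Taylor expansion around the diagonal. Writing $h = y - x$ and using the continuous differentiability of $b$ and $\sigma$,
\begin{equation*}
\phi(x, x+h) = \tfrac{1}{2} h^{\ast}(SJ_b(x) + J_b^{\ast}(x)S) h + \tfrac{1}{2} \|d\sigma(x)[h]\|_{_S}^2 + o(|h|^2),
\end{equation*}
uniformly in $x$ on any compact (by uniform continuity of $J_b$ and $d\sigma$ on compacts). Under $(AC)^{\hbox{diff}}$, the leading symmetric matrix associated to this quadratic form in $h$ is strictly negative definite at each $x$ and, by continuity plus compactness, uniformly negative definite on each ball; absorbing the $o(|h|^2)$ remainder for $|h|_{_S}$ small enough yields \eqref{NotsoWAC}. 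The main obstacle lies in the last step of the argument for $(a)$: rigorously establishing that an invariant measure supported off the diagonal must still charge neighborhoods of the diagonal. This step is delicate in the absence of an explicit ellipticity or irreducibility hypothesis and relies crucially on the continuity and stationarity properties of the duplicated diffusion.
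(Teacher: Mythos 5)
Your reduction to the $\theta\equiv 0$ case and the identification of the $(S,0)$-confluence function $\phi(x,y)=(b(x)-b(y)\,|\,x-y)_{_S}+\tfrac12\|\sigma(x)-\sigma(y)\|_{_S}^2$ are correct, but the final step of your argument for $(a)$ — that an extremal invariant measure $\mu$ with $\mu(^c\Delta)=1$ must assign positive mass to the slab $B_{|.|_{_S}}(0,R)^2\cap\{0<|x-y|_{_S}\le\delta_R\}$ — is a genuine gap, and in fact it is provably false in the generality you invoke it. The paper's own counterexample in Subsection~3.1 exhibits a duplicated diffusion with extremal invariant measures ${\cal L}(e^{i\Theta},e^{i(\Theta+\theta_0)})$ that live at a fixed positive $|\,.\,|_{_S}$-distance from the diagonal; continuity of paths and stationarity alone cannot force mass near the diagonal. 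So your line of attack cannot be closed without an additional irreducibility or ellipticity input.

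The paper circumvents this entirely by a globalization (subdivision) lemma: given $x\ne y$, set $R=\max(|x|_{_S},|y|_{_S})$, choose $N$ with $|y-x|_{_S}<N\delta_R$, and insert the points $x_i=x+\tfrac{i}{N}(y-x)$. Each consecutive pair satisfies the hypothesis of~\eqref{NotsoWAC}, and since $\bigl\|\sum_i(\sigma(x_i)-\sigma(x_{i-1}))\bigr\|_{_S}^2\le N\sum_i\|\sigma(x_i)-\sigma(x_{i-1})\|_{_S}^2$ while the drift part telescopes (using $x_i-x_{i-1}=\tfrac1N(y-x)$), the local inequality on each small segment adds up to
$$
\|\sigma(x)-\sigma(y)\|_{_S}^2<-2\,(b(y)-b(x)\,|\,y-x)_{_S},
$$
i.e.\ $\phi(x,y)<0$ for every $x\ne y$ in $\ER^d$, not just near the diagonal. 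One then invokes Theorem~\ref{thm:echellemultidim}$(a)$ (via Remark~\ref{remtheta=0}) directly: no slab argument is needed, and the conclusion follows under the standing hypotheses on $b$, $\sigma$ and ${\cal I}_{SDE}$ (which, incidentally, your proposal also implicitly needs but does not state). For part~$(b)$, your local Taylor expansion with uniform-on-compacts negative definiteness is correct and gives~\eqref{NotsoWAC}; the paper's route is essentially the same idea, written with the Laplace (integral-remainder) form of Taylor's formula and Schwarz's inequality, and in fact shows the stronger global bound $\phi<0$ on $^c\Delta$ rather than only the local one.
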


\noindent {\bf Proof.} $(a)$  
Let $x,y\!\in \ER^d$ such that $x\neq y$. Set $R= \max(|x|_{_S},|y|_{_S})$ and 
\[
x_0=x,\; x_i = x+\frac iN (y-x),\, i=1,\ldots,N-1,\,x_N = y
\]
where  $|y-x|_{_S}<N\delta_R$.  Then for every $i\!\in \{1,\ldots,N\}$, $|x_i|_{_S}\le R$ and $|x_i-x_{i-1}|_{_S}\le \delta_R$. Then 
\begin{eqnarray*}
\|\sigma(x)-\sigma(y)\|_{_S}^2 &= &\Big \|\sum_{i=1}^N\sigma(x_i)-\sigma(x_{i-1})\Big\|_{_S}^2
\le N\sum_{i=1}^N \|\sigma(x_i)-\sigma(x_{i-1})\|_{_S}^2\\
&<& -2 N \sum_{i=1}^N  (b(x_i)-b(x_{i-1})|x_i-x_{i-1})_{_S} 
 =       -2  \sum_{i=1}^N  \big(b(x_i)-b(x_{i-1})| y-x\big)_{_S}\\
 &   < &      -2 (b(y)-b(x)|y-x)_{_S}.      
\end{eqnarray*}

\noindent $(b)$ First, we prove the result when $S=I_d$. We note  that, for every continuously differentiable function $g:\ER^d\to \ER$, $g(y)-g(x)= \int_0^1\big(\nabla g(x+t(y-x)) |y-x\big)dt= \int_0^1(y-x)^*\nabla g(x+t(y-x)) dt$ so that
\[
\big(b(y)-b(x)|y-x\big) =\int_0^1 (y-x)^*J_b(x+t(y-x)\big)(y-x)dt=\int_0^1 (y-x)^*J_b^*(x+t(y-x)\big)(y-x)dt
\]
and
\[
\|\sigma(y)-\sigma(x)\|^2  =\sum_{i,j=1}^d \left(\int_0^1\big(\nabla\sigma_{ij}(x+t(y-x))|y-x\big) dt\right)^2.
\]
By Schwarz's Inequality and the fact that $(u|v)^2= u^*v^{\otimes2}u$, we deduce
\begin{align*}
\big(b(y)-b(x)|y-x\big)+\frac 12 \|\sigma(y)-&\sigma(x)\|^2\le \frac{1}{2}\int_0^1 \Big((y-x)^*(J_b+J_b^*)(x+t(y-x))(y-x) \\
&+\frac 12 \sum_{ij}(y-x)^*\big(\nabla \sigma_{ij}(x+t(y-x))\big)^{\otimes 2}(y-x)\Big)dt.
\end{align*}
This completes the proof  when $S=I_d$.
This extends to  general matrix $S\!\in {\cal S}^{++}(d,\R)$ using that  $\|\sigma(y)-\sigma(x)\|_{_S}^2=\|(\sqrt{S}\sigma)(y)-(\sqrt{S}\sigma)(x)\| ^2$ and the fact that  $(Au)^{\otimes 2}= Au^{\otimes 2}A^*$ with $A= \sqrt{S}$.\hfill $\Box$

\noindent \subsubsection{The case $\Lambda_{_S}\le 0$} 
 As mentioned before, the main field of applications of Corollary \ref{Cor:lyap} seems to be the case $\Lambda_{_S}<0$ out of the diagonal $\Delta_{\ER^{d}\times\ER^{d} }$.
In the two next sections, our objective is to state some results when this condition is not fulfilled. We begin by a simple application of Corollary~\ref{Cor:lyap} where the NILS exponent is only non-positive and  negative outside of a compact set.

\begin{prop}  \label{prop_principale2}  Assume $[b]_{S,+}<+\infty$,  $\sigma$ is Lipschitz continuous and $(SDE)$ has a unique  invariant distribution $\nu$ whose support is not compact. Then, uniqueness for $\nu_{\Delta}$ holds true as soon as 
\begin{equation}\label{locflat}
\forall x,y\,\in\ER^d, \, \Lambda_{_S}(x,y)  \le 0\;\textnormal{ and }\; \exists \, R> 0\mbox{ s.t. } \max(|x|_{_S}, |y|_{_S}) > R \Longrightarrow \Lambda_{_S}(x,y)  < 0.
\end{equation}
\end{prop}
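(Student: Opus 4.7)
The natural strategy is to reduce directly to Corollary~\ref{Cor:lyap}$(a)$, for which I only need to verify the Integrated NILS negativity condition: for every $m\in{\cal P}_{\nu,\nu}^\star$,
\begin{equation*}
\int_{\R^d\times\R^d}\Lambda_{_S}(x,y)\,m(dx,dy)<0.
\end{equation*}
Uniqueness of $\nu$ makes ${\cal I}_{SDE}=\{\nu\}$ trivially weakly compact, and since $\Lambda_{_S}\le 0$ throughout ${}^c\Delta_{\R^d\times\R^d}$ by \eqref{locflat}, the integral is unambiguously defined as an element of $[-\infty,0]$.

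The first (and essentially only) observation I would exploit is that non-compactness of $\mathrm{supp}(\nu)$ forces $\nu(\{|x|_{_S}>R\})>0$ for every $R>0$: were this mass zero, $\mathrm{supp}(\nu)$ would be contained in the closed ball $\overline{B}_{|\cdot|_{_S}}(0,R)$, hence compact, contradicting the assumption. Applied at the specific $R$ supplied by hypothesis~\eqref{locflat}, and combined with the marginal identity $m(dx\times\R^d)=\nu(dx)$, this yields for every $m\in{\cal P}_{\nu,\nu}^\star$
\begin{equation*}
m(E_R)\;\ge\; m\bigl(\{(x,y):|x|_{_S}>R\}\bigr)\;=\;\nu\bigl(\{|x|_{_S}>R\}\bigr)\;>\;0,
\end{equation*}
where I set $E_R:=\{(x,y)\in{}^c\Delta_{\R^d\times\R^d}:\max(|x|_{_S},|y|_{_S})>R\}$ (using $m(\Delta_{\R^d\times\R^d})=0$).

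To conclude, hypothesis \eqref{locflat} gives $\Lambda_{_S}<0$ on $E_R$, so the increasing family $F_n:=E_R\cap\{\Lambda_{_S}\le -1/n\}$ exhausts $E_R$, and by monotone convergence $m(F_{n_0})>0$ for some $n_0\ge 1$. Using once more $\Lambda_{_S}\le 0$ on the complement, this gives
\begin{equation*}
\int_{\R^d\times\R^d}\Lambda_{_S}\,dm\;\le\;\int_{F_{n_0}}\Lambda_{_S}\,dm\;\le\;-\tfrac{1}{n_0}\,m(F_{n_0})\;<\;0,
\end{equation*}
which is precisely the condition of Corollary~\ref{Cor:lyap}$(a)$; uniqueness of $\nu_{\Delta}$ follows. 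No real obstacle is expected: the sole delicate point is that $\Lambda_{_S}$ may approach $0$ near the diagonal or along sequences at infinity, which forces the conclusion to be obtained through the level-set exhaustion $\{\Lambda_{_S}\le -1/n\}$ rather than any uniform lower bound, and explains why non-compactness of $\mathrm{supp}(\nu)$ is crucial---it is exactly what ensures that every $m\in{\cal P}_{\nu,\nu}^\star$ charges a region on which strict negativity of $\Lambda_{_S}$ is already known.
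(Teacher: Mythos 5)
Your proof is correct and follows the same route as the paper: both reduce to Corollary~\ref{Cor:lyap}$(a)$, use the marginal identity plus non-compactness of $\mathrm{supp}(\nu)$ to deduce $m(\{\max(|x|_{_S},|y|_{_S})>R\})\ge\nu(\{|x|_{_S}>R\})>0$, and then conclude strict negativity of the integrated NILS from strict negativity of $\Lambda_{_S}$ on that set. Your level-set exhaustion merely spells out the final step that the paper leaves implicit.
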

 
\begin{proof}  Since the support of $\nu$ is not compact, we have for every $m\in {{\cal P}_{\nu,\nu}^\star}$: $m\big(\{\max(|x|_{_S}, |y|_{_S}) > R\}\big)\ge \nu(\{|x|_{_S}>R\})>0$. It follows from the assumption that
$$\forall m\in {{\cal P}_{\nu,\nu}^\star},\quad\int \Lambda_{_S}(x,y) m(dx,dy)\le \int \Lambda_{_S}(x,y)1_{\{\max(|x|_{_S}, |y|_{_S}) > R\}} m(dx,dy)<0$$
and we deduce the result from  Corollary~\ref{Cor:lyap}.

\end{proof}
\smallskip \begin{Remarque}
$\rhd$  In the particular case where $\sigma$ is constant, Condition \eqref{locflat} becomes a monotony condition on $b$ (decrease with respect to $(.|.)_{_S}$ at infinity), namely: 
	\begin{align*}
		&\forall x, y\in\ER^d,\;x\neq y\quad(b(x)-b(y)|x-y)_S\le 0,\\
		\textnormal{ and }\quad & \exists \, R> 0\mbox{ s.t. } \max(|x|_{_S}, |y|_{_S}) > R\Longrightarrow (b(x)-b(y)|x-y)_S<0.
\end{align*}
  This means that $b$ is $S$-non-increasing on $\R^d$, $S$-decreasing outside $B _{|.|_{_S}}(0;R)^2$. For instance, if $b=-\nabla U$, the above assumption holds if  $U$ is convex and (only) strictly convex outside of a compact set.

\smallskip
\noindent $\rhd$ {Note that when $\nabla U$ is  only increasing outside $B _{|.|_{_S}}(0;R)$ but possibly with no specific monotony on $B _{|.|_{_S}}(0;R)$, it is still possible to find some diffusion coefficients  $\sigma$ such that the SDE   $dX_t=-\nabla U(X_t) dt+\sigma(X_t) dW_t$ remains weakly or pathwise confluent. We refer to the next subsection for models with such \textit{stochastically stabilizing} diffusive components.   }

\smallskip
\noindent $\rhd$ Finally, note  that the above condition~(\ref{locflat})  can be also localized around the diagonal under {the directional $S$-ellipticity assumption}. To be more precise, when $\nu$ is unique and its support is not compact, Proposition~\ref{prop_principale2} still holds  if Assumption $(ii)$ is ``localized" into:
\begin{equation*}
	(ii)_{loc}\equiv \text{ for every }x,y\in\ER^d\text{ such that }0<|x-y|\le \varepsilon_0,\; \Lambda_{_S}(x,y)\le 0.
\end{equation*}
\end{Remarque}
\subsubsection{$\Lambda_{_S}$ possibly positive on some areas of $\ER^{d}\times\ER^d $}\label{nonpointwise}

In the continuity of the previous section, we try to explore some multidimensional settings where $\Lambda_{_S}$ can be positive in some parts of the space. More precisely, we focus here on gradient systems with constant noise whose potential $U$ is not convex in all the space ({see \cite{tearne} for other confluence results on this type of model with the ``random attractor'' viewpoint}). For such dynamical systems, we obtain a criterion below that we next apply to super-quadratic non-convex potentials. 
Then, we will come back to this problem in   section~\ref{explnilsnonneg} where we focus on the particular example $U(x)=(|x|^2-1)^2$, case for which we are able to obtain a sharper result.

\begin{prop} [Gradient system] \label{pro:gradsyst}Let $U:\R^d\to \R_+$ be a locally Lipschitz, differentiable function satisfying 

 $\displaystyle  0 < \liminf_{|x|\to +\infty} \frac{U(x)}{|x|^{\gamma}}<\displaystyle \limsup_{|x|\to +\infty} \frac{U(x)}{|x|^{\gamma}}<+\infty$ for a positive $\gamma$. Then,  the Brownian diffusion 
\[
dX^x_t =-\nabla U(X^x_t)dt +\sigma dW_t, \;X^x_0=x,
\]
where $\sigma>0$ and $W$  is a standard Brownian motion on $\R^d$,  satisfies a  strong existence-uniqueness property with unique invariant distribution   $\displaystyle \nu_{\sigma}(dx)= C_{\sigma} e^{-\frac{2U(x)}{\sigma^2}}dx$. 

\smallskip
Furthermore assume that its NILS exponent satisfies
\begin{equation} \label{eq: preKantoro}
\forall\, x,\, y\!\in\R^d,\quad \Lambda_{_{Id}}(x,y)\le\beta-\frac{\alpha}{2}\big(|x|^a+|y|^a\big) \quad \mbox{ where } \; \beta\in\ER,\,\alpha,\, a>0
\end{equation}

then there exists $\sigma_c>0$ such that, for every $\sigma>\sigma_c$, the related $(DSDS)$ system ($2$-point motion)  is weakly confluent.

\end{prop}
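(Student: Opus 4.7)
The plan is to invoke Corollary~\ref{Cor:lyap}$(a)$ with $S=I_d$ in order to establish weak confluence. Since here $\sigma$ is constant and $b=-\nabla U$, the two $\sigma$-dependent terms in $\Lambda_{I_d}$ vanish and the NILS exponent reduces to
$$\Lambda_{I_d}(x,y)=-\frac{(\nabla U(x)-\nabla U(y)\mid x-y)}{|x-y|^2}.$$
The pointwise bound~(\ref{eq: preKantoro}) then immediately gives $[b]_{I_d,+}\le \beta<+\infty$. The existence and uniqueness of a Gibbs invariant distribution $\nu_{\sigma}(dx)=C_\sigma e^{-2U(x)/\sigma^2}dx$ follow from the coercivity of $U$ via classical Has'minskii-type arguments (the assumption $\liminf U/|x|^\gamma>0$ yields $V(x)=1+|x|^2$ as a Lyapunov function with $\mathcal{A}V\to-\infty$ at infinity, and $e^{-2U/\sigma^2}$ is integrable). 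In particular, ${\cal I}_{SDE}=\{\nu_\sigma\}$ is weakly compact, and $\nu_\sigma$ has finite moments of every order since its tails decay super-polynomially.

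Next, for any $m\in {\cal P}^*_{\nu_{\sigma},\nu_{\sigma}}$, the bound~(\ref{eq: preKantoro}) combined with the fact that both marginals of $m$ are equal to $\nu_{\sigma}$ yields
$$\int_{\R^d\times\R^d}\Lambda_{I_d}(x,y)\,m(dx,dy)\le \beta-\alpha\int_{\R^d}|x|^a\,\nu_{\sigma}(dx).$$
The INILS criterion of Corollary~\ref{Cor:lyap}$(a)$ is therefore satisfied \emph{simultaneously for every} $m\in{\cal P}^*_{\nu_{\sigma},\nu_{\sigma}}$ as soon as $M_a(\sigma):=\int|x|^a\,\nu_{\sigma}(dx)>\beta/\alpha$. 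It thus remains to show that $M_a(\sigma)\to+\infty$ as $\sigma\to+\infty$, and the proposition follows by choosing $\sigma_c$ accordingly.

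This last point is the main technical step. I plan to handle it by the rescaling $y=\sigma^{-2/\gamma}x$, which gives
$$M_a(\sigma)=\sigma^{2a/\gamma}\,\frac{\int_{\R^d}|y|^a\,e^{-2U(\sigma^{2/\gamma}y)/\sigma^2}\,dy}{\int_{\R^d}e^{-2U(\sigma^{2/\gamma}y)/\sigma^2}\,dy}.$$
The growth hypothesis yields constants $0<c_0\le c_1$ and $C\ge 0$ with $c_0|x|^\gamma-C\le U(x)\le c_1|x|^\gamma+C$ for $|x|$ large. Plugging the \emph{lower} bound into the exponential of the denominator, one sees that the denominator is bounded above uniformly in $\sigma\ge 1$ (the contribution of a fixed compact set being handled separately by local boundedness of $U$). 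Plugging the \emph{upper} bound into the exponential of the numerator and restricting to any fixed annulus on which $\sigma^{2/\gamma}|y|$ is eventually in the polynomial regime, one obtains a positive lower bound independent of $\sigma$. Hence the ratio is bounded away from $0$ and the prefactor $\sigma^{2a/\gamma}$ drives $M_a(\sigma)$ to $+\infty$ at the explicit rate $\sigma^{2a/\gamma}$. Setting $\sigma_c:=\inf\{\sigma>0:\alpha M_a(\sigma)>\beta\}$ and applying Corollary~\ref{Cor:lyap}$(a)$ concludes. The hard part is really the explicit quantitative rescaling above — the subsequent application of the integrated NILS criterion is then routine.
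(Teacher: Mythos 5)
Your proposal is correct and follows essentially the same route as the paper: apply Corollary~\ref{Cor:lyap}$(a)$ with $S=I_d$, reduce the INILS criterion to $\nu_\sigma(|x|^a)>\beta/\alpha$, and establish $\nu_\sigma(|x|^a)\to+\infty$ via the change of variable $x=\sigma^{2/\gamma}u$ together with the two-sided polynomial growth of $U$ (the paper phrases the sandwich argument as one application of Fatou for the numerator and one of reverse Fatou for the denominator, but it is the same estimate). Two small imprecisions that do not damage the argument but are worth fixing: $(i)$ the Lyapunov bound $\mathcal{A}V\to-\infty$ for $V(x)=1+|x|^2$ does not follow from $\liminf U/|x|^\gamma>0$ alone (growth of $U$ says nothing directly about $(\nabla U(x)|x)$); you should instead take $y=0$ in~\eqref{eq: preKantoro}, which yields $(\nabla U(x)|x)\ge (\nabla U(0)|x)-\beta|x|^2+\tfrac{\alpha}{2}|x|^{a+2}\to+\infty$, or simply invoke, as the paper does, the classical fact that the Gibbs density is the reversible (hence unique) invariant density once it is integrable; $(ii)$ $\sigma_c$ should be defined as $\sup\{\sigma>0:\ \alpha M_a(\sigma)\le\beta\}$ (finite because $M_a(\sigma)\to+\infty$) rather than as an infimum, since $M_a$ has no a priori monotonicity.
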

\begin{proof} The strong existence-uniqueness is classical background.  The form of the invariant distribution $\nu_{\sigma}$ as well. Then by Fatou's Lemma  and the asymptotic upper-bound, there exists $A>0$ such that
\[
\liminf_{\sigma\to +\infty} \int_{\R^d} |u|^a  e^{-\frac{2U(\sigma^{2/\gamma}u)}{\sigma^2}}du\ge \int_{\R^d} |u|^a e^{-A  |u|^{\gamma}}  du>0.
\]
On the other hand, note that
\[
\int_{\R^d} |x|^a e^{-\frac{2U(x)}{\sigma^2}}dx = \sigma^{\frac d2 +\frac{2a}{\gamma}}\int_{\R^d} |u|^a  e^{-\frac{2U(\sigma^{2/\gamma}u)}{\sigma^2}}du. 
\]
Owing to the asymptotic lower bound for $U$ at infinity {and the (reverse) Fatou's Lemma, there exists a real number $B>0$ such that}  
\[
\limsup_{\sigma\to +\infty} \int_{\R^d} e^{-\frac{2U(\sigma^{2/\gamma}u)}{\sigma^2}}du\le \int_{\R^d}   e^{-B|u|^{\gamma}}du<+\infty.
\]
As a consequence $\displaystyle \liminf_{\sigma\to +\infty} \nu_{\sigma}\big(|x|^a\big)=+\infty$. For any distribution $m\!\in {\cal P}(\R^d\times \R^d)$ with marginal $\nu_{\sigma}$ and assigning no weight to the diagonal, one has
\[
\int_{\R^d\times \R^d\setminus \Delta_{\ER^{d}\times\ER^{d} }}\Lambda_{Id}(x,y) m(dx,dy) \le \beta-\alpha \nu_{\sigma} \big(|x|^a\big)<0
\]
as soon as $\sigma$ is large enough to ensure that $\nu_{\sigma} \big(|x|^a\big)\ge \frac{ \beta}{\alpha}$. 
\end{proof}

\begin{Remarque} We may assume without loss of generality that argmin$ _{\R^d} U = \{U=0\}\subset \{\nabla U= 0\}$ so that $\nu_{\sigma}\stackrel{\R^d}{\Longrightarrow} \nu_0 = \mbox{Unif}(\{U=0\})$ as $\sigma \to 0$. Hence, from a practical point of view, the fact that the critical $\sigma_c$ can be taken as $0$ seems a reasonable conjecture if $ \beta-\alpha \nu_0\big(|x|^a\mbox{\bf 1}_{\{U(x)=0\}}\big)\le 0$. Thus, in Section~\ref{explnilsnonneg}, we prove that it holds true for the potential  fonction $U(x)=(|x|^2-1)^2$.
\end{Remarque}
\begin{Corollaire}\label{corgrad} Assume that $U:\ER^d\rightarrow\ER_+$ is defined by $U(x)=C |x|^{2p}+\varepsilon(x)$ where $p>1$, $C>0$ and $\varepsilon$ is a ${\cal C}^1$-function such that $\nabla \varepsilon$ is Lipschitz continuous.
 Then, there exists $\sigma_c>0$ such that, for every $\sigma>\sigma_c$, the $(DSDS)$  related to the gradient system $dX_t=-\nabla U(X_t)dt+\sigma dW_t$  is weakly confluent. 
\end{Corollaire}
\begin{proof} Using that for every $x\in\ER^d$ (even if $x=0$ with an obvious extension by continuity), 
$$
D^2 (|x|^{2p})= 2p |x|^{2(p-1)}\left( 2(p-1)  \frac{x^{\otimes 2}}{|x|^2}  +  I_d\right)\ge 2p |x|^{2(p-1)}I_d\; \mbox{ in } {\cal S}^+(d,\ER),
$$
 we deduce that for every $x\neq y$,
$$\frac{\big(\nabla (|x|^{2p})-\nabla (|y|^{2p})|x-y\big)}{|x-y|^2}\ge 2 p\int_0^1 |y+t(x-y)|^{2(p-1)} dt.
$$
 If $p\ge 2$, we deduce from Jensen's inequality that 
\begin{align*}
\int_0^1 |y+t(x-y)|^{2(p-1)} dt\ge \left(\int _0^1 |y+t(x-y)|^{2} dt\right)^{p-1}&\ge \left(\frac{1}{6}(|x|^2+|y|^2)\right)^{p-1}\\&
\ge \left(\frac{1}{6}\right)^{p-1}\big(|x|^{2(p-1)}+|y|^{2(p-1)}\big)
\end{align*}
where in the last inequality, we used again that $p-1\ge 1$.
It follows that 
$$
\Lambda_{_{Id}}(x,y)\le [\varepsilon]_1-\alpha_p(|x|^{2(p-1)}+|y|^{2(p-1)})
$$
where $[\varepsilon]_1$ denotes the Lipschitz constant of $\varepsilon$ and $\alpha_p>0$. The previous result then applies in this case.

\smallskip
When $p\in(1,2)$, we deduce from the elementary inequality $||u|^\rho-|v|^\rho|\le |u-v|^\rho$ for $0<\rho<1$ 
 that 
$$\int_0^1 |y+t(x-y)|^{2(p-1)} dt\ge \alpha_p \big(|x|^{2(p-1)}+|y|^{2(p-1)}\big)$$
with $\alpha_p>0$ and the result   follows likewise.
\end{proof}

\subsection{Examples}

\subsubsection{An example of confluent diffusion with increasing drift}

Assume that $\sigma:\ER^d\rightarrow{\cal M}(d,d,\R)$ is defined by  $\sigma(x) = x  \otimes \lambda + \sigma^0$ where $\sigma^0\! \in{\cal M}(d,d,\R)$ and $\lambda :\R^d\to \R^d$ is a bounded Lipschitz function (such that $\sigma$ is Lipschitz too). If there exists $\rho\!\in (0,\frac 12)$  such that 
\begin{equation}\label{hypcd}
\limsup_{|x|\to +\infty} \frac{(b(x)|x)-\rho |x|^2|\lambda(x) |^2}{(1+|x|^2)^{\rho+\frac 12}}=-\infty
\end{equation}
then the diffusion~(\ref{EDS}) has at least one invariant distribution $\nu$. Thus, if  
$$[b]^{0}_{+} =\sup_{x\neq 0}\frac{(b(x)-b(0)|x)}{|x|^2}\neq +\infty,$$  the above condition is satisfied as soon as 
\[
\liminf_{|x|\to +\infty} |\lambda(x)|^2 >2[b]^{0}_{+}.
\]
The key is to introduce the Lyapunov function $V(x)= (a+|x|^2)^{\rho+\frac 12}$. Using that $\|(\sigma-\sigma^0)(x)\|^2=| (\sigma-\sigma^0)^*(x)\frac{x}{|x|}|^2= |\lambda(x)|^2|x|^2$, we deduce that
$$
\frac{1}{2}\|\sigma(x)\|^2-(\rho+\frac{1}{2})\Big| \sigma^*(x)\frac{x}{|x|}\Big|^2=-\rho|\lambda(x)|^2|x|^2+O(1)
$$
and it  follows that $\limsup_{|x|\to +\infty} {\cal A}V(x)=-\infty$ if \eqref{hypcd} is fulfilled (where ${\cal A}$ denotes the infinitesimal generator of~(\ref{EDS})).

If the function $\lambda$ {\em  is constant}, the diffusion is asymptotically pathwise confluent (so that $\nu$ is unique  for (\ref{EDS}) and the duplicated system  has $\nu_{\Delta}$ as unique  invariant distribution) as soon as there exists $\varepsilon_0>0$ satisfying

\begin{equation}\label{verifii}
|x-y|\le \varepsilon_0\Longrightarrow   (b(x)-b(y)|x-y)-\frac 12 |\lambda |^2  |x-y|^2< 0.
\end{equation}
This is a consequence of Proposition~\ref{prop_principale} applied with $S=I_d$ (the directional ellipticity assumption~$(i)$ is clearly true since $| (\sigma^*(x)-\sigma^*(y))(x-y)|=|\lambda|.|x-y|^2$). 
 If $b$ is smooth this condition is satisfied as soon as, for every $x\!\in \R^d$,  $\frac12 (J_b+J^*_b)(x) < \frac 12 |\lambda|^2 I_d$ in ${\cal S}(d,\R)$ ($J_b(x)$ denotes the Jacobian matrix of $b$).

\subsubsection{Baxendale's model}
Let  $\Xi_t= (X_t,Y_t)$ be the unique strong solution to the $2$-dimensional SDE
\begin{eqnarray*}
dX_t&=& \big(a-\frac{\sigma^2}{2}\big)X_tdt-(\sigma Y_t-\theta_{_X})dW_t\\
dY_t&=& \big(b-\frac{\sigma^2}{2}\big)Y_t dt +(\sigma X_t+\theta_{_Y})dW_t
\end{eqnarray*}
where $W$ is scalar standard Brownian motion, $a$, $b$, $\sigma$ are real numbers satisfying
\[
ab<0, \; a+b<0,\; \sigma >\sqrt{\frac{2ab}{a+b}}.
\]
and  $\theta_{_X}$, $\theta_{_Y}\!\in \R$. When $\theta_{_X}=\theta_{_Y}=0$, this system is known as Baxendale's system (see~$e.g.$~\cite{KLPL}). Its stochastic stability has been extensively investigated in connection with its Lyapunov exponent.  Then set 
\[
\lambda=\lambda(\sigma)= \frac{b-a+\sqrt{(b-a)^2 +\sigma^4}}{\sigma^2}\!\in(0,1)\;\mbox{ and }\; \alpha = \sigma^2-(a+b)-\sqrt{(a-b)^2+\sigma^4}>0.
\]
and $|\,.\, |_{\lambda}=|\,.\,|_{_S}$ with $S = {\rm Diag}(1,\lambda)$. It\^o's Lemma implies 
\[
d|\Xi_t|_{\lambda}^2= \Big(-\alpha |\Xi_t|_{\lambda}^2 + \theta_{_X}(\theta_{_X}-2\sigma Y_t) +\lambda\theta_{_Y}(\theta_{_Y}+2\sigma X_t)\Big)dt +\Theta(\Xi_t)dW_t
\]
where $ \Theta(x,y)=  2\big((\lambda-1) \sigma xy +\lambda \theta_{_Y} x+   \theta_{_X} y\big)$. It is clear that there exists $\beta\!\in \R_+$ such that 
\[
| \theta_{_X}(\theta_{_X}-2\sigma y )+\lambda\theta_{_Y}(\theta_{_Y}+2\sigma x)|\le \beta (|(x,y)|_{\lambda}+1).
\]
Then using that $|\xi|_{\lambda}\le \frac{1}{2\alpha} +\frac{\alpha}{2}|\xi|_{\lambda}^2$ and setting $\beta' =\beta+\frac{1}{2\alpha}$, we derive that 
\[
d|\Xi_t|_{\lambda}^2\le \beta'-\frac{\alpha}{2}|\Xi_t|_{\lambda}^2 dt +\Theta(\Xi_t)dW_t
\]
where $\theta(\xi)\le C|\xi|_{\lambda}$. Hence, the function $V(\xi)= |\xi|_{\lambda}^2$ is a Lyapunov function for the system since $AV \le \beta'-\frac{\alpha}{2} V$.
 As a consequence there exists at least one invariant distribution $\nu$ for the system and any such distribution satisfies $\nu(V)\le 2\frac{\beta'}{\alpha}$ .

\smallskip At this stage we can compute the non-infinitesimal $S$-Lyapunov exponent of the duplicated system. Tedious although elementary computations show that, for every $\xi= (x,y), \, \xi'=(x',y')\!\in \R^2$, 
\[
\Lambda_{_S}(\xi,\xi') = -\frac{\alpha}{2}- (\lambda-1) ^2\sigma^2 \frac{(x-x')^2(y-y')^2}{|\xi-\xi'|_{\lambda}^4}<0.
\]

\smallskip \begin{Remarque}

 Adapting results from~\cite{BAXBAX} obtained for diffusions on compact manifolds, one easily derive another type of criterion for weak confluence. Namely, if the diffusion $(X^x_t)_{t\ge 0}$ ($1$-point motion) has a unique invariant distribution $\nu$ with support $\R^d$ and if $X^x_t\stackrel{\cal L}{\longrightarrow} \nu$ as $t\to +\infty$ for every $x\!\in \R^d$ and if {\em no nonempty closed connected subset $C$ of $\R^d\times \R^d\setminus \Delta_{\ER^{d}\times\ER^{d} }$ is left stable by the $2$-point motion $(X^{x_1}_t,X^{x_2}_t)_{t\ge 0}$ $((x_1,x_2)\!\in C)$}, then the $2$-point motion is weakly confluent with invariant distribution $\nu_{\Delta}= \nu \circ (x\mapsto (x,x))^{-1}$. However, although more intuitive this criterion seems not to be tractable compared to the above criterions based on the $NILS$ exponent.

\end{Remarque}

\subsubsection{An example of gradient system with a non- convex potential}\label{explnilsnonneg}
Let $U:\R^d\to \R_+$ be defined by $U(x) = \frac{(|x|^2-1)^2}{4}$, $x\!\in \R^d$. Applying Corollary \ref{corgrad} with 
$p=2$ and $\varepsilon(x)=\frac{1}{4}(1-2x^2)$, one deduces that there exists $\sigma_c>0$ such that for every $\sigma>\sigma_c$, 
the $2$-point motion related to  
$dX^x_t =-\nabla U(X^x_t)dt +\sigma dW_t$ is weakly confluent. In fact, for this function, we obtain the weak confluence for every $\sigma>0$.
\begin{prop}\label{resultcaspart} 
Let $U:\R^d\to \R_+$ be defined by $U(x) = \frac{(|x|^2-1)^2}{4}$. Then, for every $\sigma>0$, the (DSDS) related to the Brownian diffusion 
$dX^x_t =-\nabla U(X^x_t)dt +\sigma dW_t$ is weakly confluent.
\end{prop}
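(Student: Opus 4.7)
The plan is to split on the dimension. For $d=1$, the assumptions of Theorem~\ref{theoremhasm}(b) are directly satisfied: with $\sigma>0$ constant the Yamada-type hypothesis on $\sigma$ is trivial (take $\lambda(u)=u$), the scale function $p(x)=\int_0^x e^{(\xi^4/2-\xi^2)/\sigma^2}\,d\xi$ clearly satisfies $p(\pm\infty)=\pm\infty$, and the speed measure $M(d\xi)=\sigma^{-2}e^{-(\xi^4/2-\xi^2)/\sigma^2}\,d\xi$ is finite; hence Theorem~\ref{theoremhasm} gives uniqueness of $\nu_\Delta$ (and pathwise confluence as a bonus).

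For $d\ge 2$ I would apply Corollary~\ref{Cor:lyap}(a) with $S=I_d$. Since $\sigma$ is a constant multiple of $I_d$, the last two terms in $\Lambda_{I_d}$ vanish, so $\Lambda_{I_d}(x,y)=(b(x)-b(y)|x-y)/|x-y|^2$ with $b(x)=(1-|x|^2)x$. The decomposition $|x|^2 x-|y|^2 y=\tfrac{|x|^2+|y|^2}{2}(x-y)+\tfrac{|x|^2-|y|^2}{2}(x+y)$ yields the closed form
\[
\Lambda_{I_d}(x,y)=1-\frac{|x|^2+|y|^2}{2}-\frac{1}{2}\frac{(|x|^2-|y|^2)^2}{|x-y|^2}\le 1-\frac{|x|^2+|y|^2}{2}.
\]
Integrating against any $m\in{\cal P}^{\star}_{\nu_\sigma,\nu_\sigma}$, whose marginals are both $\nu_\sigma$, gives $\int\Lambda_{I_d}\,dm\le 1-E_1$ with $E_1:=\int|x|^2\,\nu_\sigma(dx)$. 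Corollary~\ref{Cor:lyap}(a) then reduces the whole claim to showing $E_1>1$ for every $\sigma>0$.

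This moment inequality, and crucially its uniformity in $\sigma$, is the main obstacle of the proof; any standard quantitative use of Proposition~\ref{pro:gradsyst} would only produce a strictly positive $\sigma_c$. Using $\nu_\sigma\propto e^{-(|x|^2-1)^2/(2\sigma^2)}\,dx$, spherical coordinates, and the substitution $w=|x|^2-1\in(-1,+\infty)$, the sign of $E_1-1$ coincides with that of $I(\sigma,d):=\int_{-1}^{+\infty}w(1+w)^{(d-2)/2}e^{-w^2/(2\sigma^2)}\,dw$. For $d=2$, a direct primitive gives $I(\sigma,2)=\sigma^2 e^{-1/(2\sigma^2)}>0$. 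For $d\ge 3$, split the integral at $w=0$; the substitution $w\mapsto -w$ on $(-1,0)$ transforms the negative part into $-\int_0^1 w(1-w)^{(d-2)/2}e^{-w^2/(2\sigma^2)}\,dw$, and since the exponent $(d-2)/2$ is non-negative we have $(1+w)^{(d-2)/2}\ge(1-w)^{(d-2)/2}$ on $[0,1]$. Adding the strictly positive contribution from $(1,+\infty)$ then yields $I(\sigma,d)>0$, hence $E_1>1$, completing the proof.
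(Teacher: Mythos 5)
Your proof is correct and, for $d\ge 2$, follows essentially the same route as the paper. After specializing Corollary~\ref{Cor:lyap}(a) with $S=I_d$, you obtain (via an explicit algebraic decomposition of $|x|^2x-|y|^2y$ that the paper only alludes to as ``elementary computations'') the same closed form and the same key bound $\Lambda_{I_d}(x,y)\le 1-\tfrac12(|x|^2+|y|^2)$, thereby reducing the claim to the $\sigma$-uniform moment inequality $\int|x|^2\,d\nu_\sigma>1$. The only divergence is in the one-dimensional sign argument that finishes the proof: the paper uses the pointwise bound $(1-r^2)r^{d-1}\le(1-r^2)r$ on $[0,\infty)$ to reduce every $d\ge 2$ to an explicit $d=2$ antiderivative, while you substitute $w=r^2-1$ and exploit the reflection $w\mapsto -w$ together with $(1+w)^{(d-2)/2}\ge(1-w)^{(d-2)/2}$ on $[0,1]$. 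Both are elementary and of the same weight; yours also applies verbatim at $d=2$, so the separate $d=2$ computation is not strictly needed. One genuinely different feature of your write-up is the separate treatment of $d=1$ via Theorem~\ref{theoremhasm}(b). The paper's proof covers only $d=2$ and $d>2$ (consistent with Section~\ref{sec:dge2} being the multidimensional case), and rightly so: for $d=1$ the integral $I(\sigma,1)$ is negative for small $\sigma$ (its leading term near $w=0$ is $-\tfrac12\int w^2 e^{-w^2/(2\sigma^2)}\,dw<0$), so the INILS criterion genuinely fails and a different argument is required. Your fall-back to the one-dimensional scale-function result is the correct fix if one wishes to include $d=1$ in the statement.
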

\begin{proof}
Elementary computations show that, for every $x,y\!\in \R^d$,  
\[
\Lambda_{_{Id}}(x,y)= 1-\frac 12 \Big(\big(|x|^2+|y|^2\big)+ \frac{(x+y|x-y)^2}{|x-y|^2}\Big)\le 1-\frac 12 \big(|x|^2+|y|^2\big)
\]
so that  for every $m\in {{\cal P}_{\nu,\nu}^\star}$,
\begin{equation}\label{duality}
\int \Lambda_{_{Id}}(x,y)m(dx,dy)<1-\frac{1}{2}\left(\int|x|^2\nu_\sigma(dx)+\int|y|^2\nu_\sigma(dy)\right)=\frac{1}{Z_\sigma}\int (1-|x|^2)e^{- \frac{2U(x)}{\sigma^2}}dx
\end{equation}
with $Z_\sigma= \displaystyle \int _{\R^d} e^{-\frac{2 U(x)}{\sigma^2}}dx$. By Corollary \ref{Cor:lyap}$(a)$, it is now enough to prove that 
$$
\int_{\R^d} (1-|x|^2)e^{- \frac{2U(x)}{\sigma^2}}(dx)<0.
$$
Thanks to a change of variable,
$$
\int_{\R^d} (1-|x|^2)e^{- \frac{2U(x)}{\sigma^2}}dx={\rm Vol}({\cal S}_{d-1})\int_0^{+\infty} (1-r^2) r^{d-1} e^{-\frac{(r^2-1)^2}{2\sigma^2}} dr
$$
where ${\rm Vol}({\cal S}_{d-1})$ denotes the  hyper-volume of the $d-1$-dimensional Euclidean ball. When $d=2$, it follows that
$$
\int_0^{+\infty} (1-r^2) r e^{-\frac{(r^2-1)^2}{2\sigma^2}} dr=\frac{\sigma^2}{2}\left[e^{-\frac{(r^2-1)^2}{2\sigma^2}}\right]_0^{+\infty}=\frac{\sigma^2e^{-\frac{1}{\sigma^2}}}{2}<0.
$$
When $d>2$, note  that $(1-r^2) r^{d-1} \le (1-r^2) r$ for every $r\in[0,+\infty)$ so that
$$
\int_0^{+\infty} (1-r^2) r^{d-1} e^{-\frac{(r^2-1)^2}{2\sigma^2}} dr=\int_0^{+\infty} (1-r^2) r e^{-\frac{(r^2-1)^2}{2\sigma^2}} dr<0.
$$
This completes the proof.
\end{proof}
\subsection{Weak confluence: toward an optimal transport viewpoint}\label{OptiTrans} As a conclusion of this first part of the paper, let us note  that {when $\nu$ is unique,} the question of the negativity of the Integrated NILS exponent on the 
set of probabilities $m\! \in {\cal P}_{\nu,\nu}^\star$ is connected with an optimal transport problem (see~$e.g.$~\cite{villani} for a background on this topic). 

Let us be more precise. Assume that $\Lambda_{_S}$ satisfies~\eqref{eq:Lambdalsc} and let  $\bar{\Lambda}_{_S}:\ER^d\times\ER^d\rightarrow\ER$  denote its upper semi-continuous (u.s.c.) envelope.
If $[b]_{S,+}\!<\!+\infty$ and  $\sigma$ is Lipschitz continuous, $\bar \Lambda_{_S}$ is $[-\infty, C_{b,\sigma}]$-valued where $C_{b,\sigma}$ is a real constant (note that when $b$ and $\sigma$ are continuously differentiable,  the extension on the diagonal has an explicit form obtained by replacing the infimum by a supremum  in~\eqref{formellsc}).
If we slightly  strengthen our criterion~\eqref{eq:NILSmneg} --~negativity of the the INILS exponent on ${\cal P}_{\nu,\nu}^\star$~-- by   also asking that  $\int_{\R^d}\bar{\Lambda}_{_S}(x,x)\nu (dx) <0$~(\footnote{to be compared to the necessary condition~\eqref{prescriterediag}.}) and if we denote  by ${\cal P}_{\nu,\nu}(\ER^d\!\times\!\ER^d)$ the (convex) set of distributions on  $ \ER^d\!\times\!\ER^d$ with marginals $\nu$ on $\R^d$, one   checks   that the more stringent resulting criterion reads
$$
\forall m\!\in {\cal P}_{\nu,\nu}(\ER^d\times\ER^d),\quad \int_{\R^d\times \R^d}\hskip -0.25 cm  \bar{\Lambda}_{_S}(x,y) m(dx,dy)<0.
$$
Owing to the weak compactness of ${\cal P}_{\nu,\nu}(\ER^d\times\ER^d)$  and to the (weak) u.s.c. of  the mapping $m\mapsto \int_{\R^d\times \R^d} \bar\Lambda_{_S}(x,y) m(dx,dy)$, the above criterion is equivalent to
$$
\max\left\{ \int_{\R^d\times \R^d}\hskip -0.25cm  \bar{\Lambda}_{_S}(x,y) m(dx,dy), \;m\!\in {\cal P}_{\nu,\nu}(\ER^d\times\ER^d)\right\}<0.
$$
Thanks to the Kantorovich duality Theorem and the symmetry of $\bar \Lambda_{_S}$, this criterion is in turn  equivalent to 
$$
{\inf}\left\{\int_{\R^d}\! \varphi  \,d \nu,\; \varphi \! \in L^1(\nu),\; \varphi(x)\!+\!\varphi(y)\ge \bar{\Lambda}_{_S} (x,y),\; (x,y)\!\in\ER^d\times\ER^d\right\}<0.
$$
Note that this last formulation of the problem is well-posed   since it only involves the marginal invariant distribution $\nu$. For instance, it could be the starting point to devising numerical methods for testing the weak confluence of the diffusion.
   
Note that the argument derived from~\eqref{duality} can be   viewed as a duality-type argument applied with $\varphi(x)= \frac{1}{2}(1-|x|^2)$ and, more generally, so is the case for the criterion~\eqref{eq: preKantoro} in~Proposition~\ref{pro:gradsyst}.

\section{Application to the Richardson-Romberg extrapolation for the approximation of invariant distributions}\label{sec:RR}
 As an application, we investigate in this section the Richardson-Romberg ($RR$) extrapolation for the approximation of invariant measures.
Roughly speaking,  the aim of a $RR$ method is generally to improve the order of  convergence of an algorithm based on an discretization scheme by cancelling the first order error term induced by the time discretization of the underlying process. However, to be efficient, such a method must be implemented with  a control of its variance. We will see that in this context, this control is strongly linked to the uniqueness of the invariant distribution of the duplicated diffusion.
\subsection{Setting and Background}
\subsubsection{Recursive computation of the invariant distribution of a diffusion: the original procedure }
Following \cite{LP1} and a series of papers cited in the introduction, we consider here a sequence of empirical measures $(\nu_n^{\eta}(\omega,dx))_{n\ge1}$ built as follows: let $(\gamma_n)_{n\ge1}$ denote a non-increasing sequence of positive {\em step parameters}  satisfying
$$
\gamma_n\xrn{n\nrn}0\quad\textnormal{and}\quad\Gamma_n=\sum_{k=1}^{n}\gamma_k\xrn{n\nrn}+\infty.
$$
We denote by $(\bar{X}_n)_{n\ge0}$ the Euler scheme with step sequence $(\gamma_n)_{n\ge1}$ defined by $\bar{X}_0=x\in\ER^d$ and 
$$
\bar{X}_{n+1}=\bar{X}_n+\gamma_{n+1} b(\bar{X}_n)+\sqrt{\gamma_{n+1}}\sigma(\bar{X}_n)U_{n+1}
$$
where 
$(U_n)_{n\ge1}$ is a sequence of i.i.d. centered $\R^q$-valued random vectors such that $\Sigma_{U_1}=I_q$ defined on a probability space $(\Omega,{\cal A}, \PE)$.
The sequence of {\em weighted empirical measures}   $(\nu_n^{\eta}(\omega,dx))_{n\ge1}$ is then defined  
for every  $n\ge1$, by 
$$
\nu_n^{\eta}(\omega,f)=\frac{1}{H_n}\sum_{k=1}^n\eta_k \delta_{\bar{X}_{k-1}(\omega)}
$$
where $\delta_a$ denotes the Dirac mass at $a\!\in \ER^d$ and $(\eta_k)_{k\ge1}$ is a sequence of positive weights such that $H_n=\sum_{k=1}^n\eta_k\xrn{n\nrn}+\infty$. When $\eta_k=\gamma_k$ which corresponds to the genuine case, we will only write $\nu_n(\omega,dx)$ instead of $\nu_n^{\gamma}(\omega,dx)$. 
For this sequence, we recall in Proposition \ref{proprappel} below in a synthesized form the main convergence results   (including rates) of the sequence $ (\nu_n^{\eta}(\omega,dx))$ to the invariant distribution $\nu$ of $(X_t)$. In this way, we introduce two assumptions: \\
\noindent $\mathbf{(S_a)}:(a>0)$ There exists a positive ${\cal C}^2$-function $V:\ER^d\rightarrow\ER$ with 
$$\lim_{|x|\rightarrow+\infty} V(x)=+\infty, \quad |\nabla V|^2\le C V,\quad \textnormal{and }\quad \sup_{x\in\ER^d}\|D^2 V(x)\|<+\infty$$
such that there exist some positive constants $C_{b}$, $\beta$ and $\alpha$ such that:
\begin{align*} 
&\textit{(i)}\quad |b|^2\le C_b V^a, \quad {\rm Tr}(\sigma\sigma^*)(x)=o(V^a(x))\quad\textnormal{as $|x|\rightarrow+\infty$}
&\textit{(ii)}\quad \psg\nabla V| b\psd \le \beta-\alpha V^a.
\end{align*}
This Lyapunov-type assumption is sufficient to ensure the long-time stability of the Euler sheme (in a sense made precise below) as soon as $a\in(0,1]$. Note that the convergence can be obtained under a less restrictive mean-reverting assumption including the case $a=0$ (see \cite{panloup-these}).
The second assumption below is fundamental to establish  the rate of convergence of $(\nu_n^{\eta}(\omega,f))$ to $\nu(f)$ for a fixed smooth enough function $f:\ER^d\rightarrow\ER$: we assume that $f$ has a smooth solution to the Poisson equation (see \cite{Parver1} for results on this topic).

\medskip
\noindent $\mathbf{(C(f,k))}$: There
exists a ${\cal C}^k$-function $g:\ER^{d}\rightarrow\ER$ solution to $f-\nu(f)={\cal A}g$ such that $f$, $g$ and its partial derivatives up to $k$ are dominated by $V^r$ ($r\ge0$):   $|f|\le C V^r$ and for  every $\alpha=(\alpha_1,\ldots,\alpha_d)\!\in \EN^d$
with $|\alpha|:=\alpha_1+\cdots+\alpha_d\in\{0,\ldots,k\}$,
$|\partial^{|\alpha|}_{x^{\alpha_1}_{i_1},\ldots,x^{\alpha_d}_{i_d}} g|\le CV^r$.
\medskip 

\noindent Before recalling the results on $(\nu_n(\omega,dx))$, let us  introduce further notations. We set
$$
\forall\, r\!\in \EN,\qquad \Gamma_n^{(r)}=\sum_{k=1}^n \gamma^{r}_{k}
$$
and for a {smooth enough} function $h:\ER^d\rightarrow\ER$ and an integer $r\ge 2$, we write: 
$$
D^{(r)} h(x)\,y_1\otimes \cdots \otimes y_r=\sum_{(i_1,\ldots,i_r)\in\{1,\ldots,d\}^r}\partial^{r}_{x_{i_1},\ldots,x_{i_r}} h(x) y_{1}^{i_1}\ldots y^{i_r}_r.
$$
\begin{prop}\label{proprappel} Assume $\mathbf{(S_a)}$ holds for an  $a\!\in(0,1]$  {and $U_1\!\in \cap_{p>0}L^p(\PE)$}. Assume that $(\eta_k/\gamma_k)$ is a non-increasing sequence. Then,  

\smallskip
\noindent  $(i)$ For every non-increasing sequence $(\theta_n)_{n\ge1}$ such that $\sum_{n\ge1}\theta_n\gamma_n<+\infty$ and for every $r\!>\!0$, $\sum_{n\ge1}\theta_n\gamma_n \ES[V^r(\bar{X}_n)]<+\infty$.

\smallskip
\noindent  $(ii)$ For every $r\!>\!0$, $\sup_{n\ge1}\nu_n^{\eta}(\omega, V^r)<+\infty$ $a.s.$ In particular, $(\nu_n^{\eta}(\omega, dx))_{n\ge1}$ is $a.s.$ tight.

\smallskip
\noindent  $(iii)$ Every weak limit of $(\nu_n^{\eta}(\omega, dx))_{n\ge1}$ is an invariant distribution for $(X_t)_{t\ge 0}$. Furthermore, if $(SDE)$ has a unique invariant distribution, say $\nu$, then $\nu_n^{\eta}(\omega, f)\xrn{n\nrn}\nu(f)$ $a.s.$ for every $\nu$-$a.s$ continuous function $f$ such that $|f|\le CV^r$ for an $r>0$.

\smallskip
\noindent  $(iv)$ (Rate of convergence when $\eta_k=\gamma_k$): Assume that $\nu$ is unique and that $\ES[U_1^{\otimes 3}]=0$. Let $k\ge1$ such that $f:\ER^d\rightarrow\ER$ satisfies $\mathbf{(C(f,k))}$. Then,

\smallskip
\noindent $\bullet$ If $k=4$ and $\frac{\Gamma_n^{(2)}}{{\sqrt{\Gamma_n}}}\xrn{n \nrn}0$, 
$$
\sqrt{\Gamma_n}\left(\nu_n(\omega,f)-\nu(f)\right)\overset{(\ER)}{\Longrightarrow}{\cal N}\Big(0;\int_{\ER^d}|\sigma^*\nabla g|^2d\nu\Big)\quad \mbox{as}\;n\rightarrow+\infty.
$$
%
\noindent $\bullet$ If $k=5$ and  $\frac{\Gamma_n^{(2)}}{{\sqrt{\Gamma_n}}}\xrn{n \nrn}\widetilde{\beta}\in(0,+\infty]$,
\begin{align*}
\rhd \;&\sqrt{\Gamma}_n\Big(\nu_n(\omega,f)-\nu(f)\Big)\overset{(\ER)}{\Longrightarrow}{\cal N}\Big(\widetilde{\beta}\,m_g^{(1)};\int_{\ER^d}|\sigma^*\nabla g|^2d\nu\Big)\quad \mbox{as}\;n \rightarrow+\infty &\textnormal{ if $\widetilde{\beta}\in (0,+\infty)$},\\
\rhd  \;&\frac{\Gamma_n}{\Gamma_n^{(2)}}\left(\nu_n(\omega,f)-\nu(f)\right)\overset{a.s.}{\longrightarrow} m_g^{(1)}\quad \mbox{as}\;n\rightarrow+\infty
&  \textnormal{if $\widetilde{\beta}= + \infty$\hskip 0,5 cm }
\end{align*}
where $\displaystyle m_g^{(1)}=\int_{\ER^d} \varphi_1d\nu $ with
\begin{equation}\label{eq:varphi1}
\varphi_1(x)=\frac{1}{2} D^2g(x) b(x)^{\otimes 2}+\frac{1}{2} \ES[D^3g(x) b(x)(\sigma(x)U_1)^{\otimes 2}]+\frac{1}{24} \ES[D^4g(x) (\sigma(x)U_1)^{\otimes 4}].
\end{equation}
\end{prop}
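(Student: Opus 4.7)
The proof gathers classical results on the long run behaviour of weighted empirical measures of decreasing-step Euler schemes, already established in~\cite{LP1}, \cite{lemairethese} and subsequent works. Rather than redoing the full computations, my plan is to sketch how each claim is organised around a single discrete Lyapunov estimate and a martingale decomposition of the Poisson solution.

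\textbf{Step 1 (Claims $(i)$ and $(ii)$: stability of $\bar X$).} The cornerstone is a one-step control: a second-order Taylor expansion of $V^p$, combined with $|\nabla V|^2\le CV$, $\sup\|D^2V\|<+\infty$, $|b|^2\le C_b V^a$, ${\rm Tr}(\sig)=o(V^a)$, $(\nabla V|b)\le\beta-\alpha V^a$, and integrability of $U_1$, yields for every $p>0$ and $n$ large enough a constant $\alpha_p>0$ and $C_p\!\in\!\R_+$ with
\[
\ES[V^p(\X_{n+1})\,|\,\fen]\le V^p(\X_n)+\gamma_{n+1}\bigl(C_p-\alpha_p V^{p+a-1}(\X_n)\bigr).
\]
Iterating and using $a\!\in\!(0,1]$ gives $\sup_n\ES[V^r(\X_n)]<+\infty$ for every $r>0$, from which $(i)$ follows by summing against $\theta_n\gamma_n$. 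For $(ii)$ one combines the same inequality with a Robbins--Siegmund / Abel summation argument, exploiting monotonicity of $(\eta_k/\gamma_k)$ to get the almost sure bound on $H_n^{-1}\sum_{k=1}^n\eta_k V^r(\X_{k-1})$; tightness of $(\nu_n^\eta(\omega,dx))_{n\ge 1}$ is then immediate since $V\to+\infty$ at infinity.

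\textbf{Step 2 (Claim $(iii)$: identification of limits).} The Echeverria--Weiss type strategy applies: for every $f$ in a determining class (bounded with bounded derivatives, or $f$ satisfying $\cfk$), I would show
\[
\nu_n^\eta(\omega,{\cal A}f)\longrightarrow 0\quad a.s.
\]
via a Taylor expansion of $f(\X_k)-f(\X_{k-1})$, writing it as $\gamma_k{\cal A}f(\X_{k-1})+M_k+R_k$ with $|R_k|\le C\gamma_k^2V^s(\X_{k-1})$ and $M_k$ a martingale increment; the remainder is summable thanks to $(i)$ (with $\theta_n=\gamma_n$) and the martingale part is controlled by Chow's strong law. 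Any weak limit $\mu$ of $(\nu_n^\eta(\omega,\cdot))$ thus satisfies $\mu({\cal A}f)=0$, hence is invariant. Uniqueness of $\nu$ forces $\nu_n^\eta(\omega,dx)\Rightarrow\nu$ $a.s.$, and the extension to $\nu$-$a.s.$ continuous $f$ with $|f|\le CV^r$ follows by uniform integrability furnished by $(ii)$ applied to $V^{r'}$ with $r'>r$.

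\textbf{Step 3 (Claim $(iv)$: CLT with bias).} Here I would use the Poisson solution $g$ to telescope $\nu_n(\omega,f)-\nu(f)$. An It\^o--Taylor expansion of $g(\X_k)-g(\X_{k-1})$ to order $k-1\!\in\!\{3,4\}$, combined with $\mathcal{A}g=f-\nu(f)$ and $\ES[U_1^{\otimes3}]=0$, produces the decomposition
\[
\Gamma_n(\nu_n(\omega,f)-\nu(f))=g(\X_0)-g(\X_n)+N_n+B_n+\rho_n,
\]
where $N_n=\sum_{k=1}^n\sqrt{\gamma_k}\,(\nabla g(\X_{k-1})\,|\,\sigma(\X_{k-1})U_k)$ is an $(\fen)$-martingale, $B_n=\sum_{k=1}^n\gamma_k^2\varphi_1(\X_{k-1})$ with $\varphi_1$ as in~\eqref{eq:varphi1}, and $\rho_n$ collects higher-order remainders controlled in $L^1$ by $(i)$. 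By $(iii)$ applied to $|\sigma^*\nabla g|^2$ (which needs $\cfk$ with $k\ge 4$), $\Gamma_n^{-1}\langle N\rangle_n\to\int|\sigma^*\nabla g|^2d\nu$ $a.s.$; a Lindeberg check using moments of $U_1$ then gives $N_n/\sqrt{\Gamma_n}\Rightarrow\mathcal{N}(0,\int|\sigma^*\nabla g|^2d\nu)$. In parallel $(iii)$ applied to $\varphi_1$ (which needs $k=5$) yields $B_n/\Gamma_n^{(2)}\to m_g^{(1)}$ $a.s.$ The three regimes distinguished by $\Gamma_n^{(2)}/\sqrt{\Gamma_n}\to 0$, $\tilde\beta\!\in\!(0,+\infty)$, $+\infty$ then follow by Slutsky. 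The main obstacle is bookkeeping: one has to control each term in $\rho_n$ in $L^1(\PE)$ uniformly so that $\rho_n/\sqrt{\Gamma_n}\to 0$, which is precisely where the interplay between the polynomial domination by $V^r$ in $\cfk$, the moment bounds on $U_1$, and the summability granted by $(i)$ is used repeatedly; the detailed verification is essentially the content of~\cite{LP1,lemairethese} and need not be reproduced here.
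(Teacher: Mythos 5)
The paper does not actually prove this proposition: its ``proof'' is a two-line citation to~\cite{LP2} for $(i)$--$(iii)$ and to~\cite{LP1} (Theorem 10) and~\cite{lemairethese} (Theorem V.3) for $(iv)$, where the rate of convergence for general weights $(\eta_k)$ is established. Your sketch is a faithful condensation of exactly those arguments: the one-step Lyapunov recursion $\ES[V^p(\X_{n+1})\,|\,\fen]\le V^p(\X_n)+\gamma_{n+1}(C_p-\alpha_p V^{p+a-1}(\X_n))$ giving $L^p$-stability and then $a.s.$ stability via Robbins--Siegmund / Abel summation; the Echeverria--Weiss identification of weak limits through $\nu_n^\eta({\cal A}f)\to 0$; and the Poisson-equation telescoping $\Gamma_n(\nu_n(f)-\nu(f))=g(\X_0)-g(\X_n)+N_n+B_n+\rho_n$ with the CLT on $N_n/\sqrt{\Gamma_n}$ and the bias $B_n/\Gamma_n^{(2)}\to m_g^{(1)}$, split into the three regimes via Slutsky. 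I find no gap in the logic; the only minor imprecision is in the commentary that claim $(iii)$ ``applied to $|\sigma^*\nabla g|^2$ needs $\cfk$ with $k\ge 4$'' --- what requires $k\ge 4$ (resp.\ $k=5$) is the length of the Taylor expansion needed to control the remainder $\rho_n$ (resp.\ to isolate the full bias $\varphi_1$ involving $D^4 g$), not the application of $(iii)$ to $|\sigma^*\nabla g|^2$ itself, which only needs $\nabla g$ dominated by a power of $V$. This is a bookkeeping remark that does not affect the validity of the argument.
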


The first three claims part $(i)$, $(ii)$ and $(iii)$ of the theorem  follow from \cite{LP2} whereas the $(iv)$ is derived from \cite{LP1} (see
Theorem 10) and \cite{lemairethese} (see Theorem V.3), in which the rate of convergence is established for a wide family
of weights $(\eta_k)$. 

Applying $(iv)$ to polynomial steps of the following form: $\gamma_n= Cn^{-\mu}$, $\mu \!\in(0,1]$,
we observe that the optimal (weak) rate is  $n^{-1/3}$ and is attained for  $ \mu =1/3$. Then
\[
\widetilde \beta = \sqrt{6} \,C^{\frac 32}\; \mbox{ and }\; \sqrt{\Gamma_n} \sim \sqrt{3C/2}\,n^{\frac 13}.
\]
so that
\[
n^{\frac 13} \Big(\nu_n(\omega,f)-\nu(f)\Big)\stackrel{(\ER)}{\Longrightarrow} {\cal N}\Big(2 C\,m_g^{(1)};\frac{2}{3C}\int_{\ER^d}|\sigma^*\nabla g|^2d\nu\Big).
\]
This corresponds to the case where the rate of convergence of the underlying diffusion toward its steady regime  ($\sqrt{\Gamma_n}$ corresponding to $\sqrt{t}$  in the continuous time setting, see~\cite{bhatta82} for the CLT for the diffusion itself) and the discretization error are of the same order. From a practical point of view it seems clear that a balance should be made between the asymptotic bias and the asymptotic variance to specify the constant $C$. Under slightly more stringent assumptions we prove that the $L^2$--norm of the error $\nu_n(\omega,f)-\nu(f)$ satisfies
\begin{equation*}
	\| \nu_n(\omega, f) - \nu(f) \|_{L^2} \sim n^{-\frac{1}{3}} \sqrt{4 C^2 (m_g^{(1)})^2 + \frac{2}{3 C} \int_{\ER^d}|\sigma^*\nabla g|^2d\nu}.
\end{equation*}
An optimisation with respect to $C$ gives the optimal choice $C = \left( \frac{12\int_{\ER^d}|\sigma^*\nabla {g_{_f}}|^2d\nu}{(m_{g_{_f}}^{(1)})^2} \right)^{\frac 1 3}$.

When $\mu\!\in(0,1/3)$, the step sequence decreases too slowly and the error induced by the time discretization error becomes 
prominent. That is why we propose below to use an RR extrapolation in order to cancel the first-order term in the time  discretization error: in practice this amount to  killing the bias $m^{(1)}_g$ in order to extend the range of application of the rate $\sqrt{\Gamma_n}$ (which corresponds to the standard weak rate $\sqrt{t}$ in Bhattacharia's $CLT$) to ``slower steps". 

\subsubsection{The Richardson-Romberg extrapolated algorithm} 
As mentioned before, the starting idea is to introduce a second Euler scheme with step sequence $(\widetilde{\gamma}_n)_{n\ge1}$
defined by 
$$ 
\forall n\ge1,\quad \widetilde{\gamma}_{2n-1}=\widetilde{\gamma}_{2n}={\frac{\gamma_n}{2}}.
$$
As concerns the white noise of both schemes, our aim is to make them consistent in absolute time and correlated (with correlation matrix $\rho$ satisfying  $I_q -\rho^*\rho \!\in {\cal S}^+(d, \ER)$). To achieve that we proceed as follows. 

Let $(Z_n)_{n\ge1}$ be a sequence of $i.i.d.$  $\R^q$-valued random vectors lying in $\cap_{p>0}L^p(\PE)$ and satisfying 
$$
\ES\,Z_1= 0, \quad \Sigma_{Z_1}=I_q, \quad \ES[Z_1^{\otimes 3}]= \ES[Z_1^{\otimes 5}]=0.
$$
Then we devise from this sequence the white noise sequence $(U_n)_{n\ge 1}$ of the ``original" Euler scheme with step $(\gamma_n)_{n\ge 1}$by setting
\begin{equation}\label{noise1}
\forall n\ge1,\quad U_n=\frac{1}{\sqrt{2}}\left(Z_{2n-1}+Z_{2n}\right).
\end{equation}

The white noise sequence for the second Euler scheme (with step $(\widetilde \gamma_n)_{n\ge 1}$), denoted $Z^{(\rho)}$ is defined as follows: 
\begin{equation}\label{noise2}
Z_n^{(\rho)}=\rho^* Z_n+T(\rho) V_n,\; n\ge 1,
\end{equation}
where $(V_n)_{n\ge 1}$ is also   a sequence of $i.i.d.$ centered random variables in $\R^q$ with moments of any order satisfying $\Sigma_{V_1}=I_q$ and $\ES[V_1^{\otimes 3}]= \ES[V_1^{\otimes 5}]=0$, {\em  independent} of $(Z_n)_{n\ge 1}$ and $T_q(\rho)$ is a solution to the equation
\[
T_q(\rho) T_q(\rho)^* =  I_q-\rho^*\rho\!\in {\cal S}^+(d, \ER).
\]
($T_q(\rho)$ can be chosen either as the commuting symmetric square root of $I_q-\rho^*\rho$ or its Choleski transform). Note that $(Z_n^{(\rho)})_{n\ge 1}$ is built in so that it satisfies
$$
\Sigma_{Z_n^{(\rho)}}=I_q\quad\textnormal{and}\quad {\rm Cov}(Z_n,Z_n^{(\rho)})=\rho.
$$
Then the Euler scheme with step $\widetilde \gamma_n$ and consistent $\rho$-correlated white noise  $(Z_n^{(\rho)})_{n\ge 1}$, denoted  $(\bar{Y}_n^{(\rho)})_{n\ge1}$ from now on, is defined by:  
$$
\bar{Y}_{n+1}^{(\rho)}=\bar{Y}_n^{(\rho)}+\widetilde{\gamma}_n b(\bar{Y}_n^{(\rho)})+\sqrt{\widetilde{\gamma}_n}\sigma(\bar{Y}_n^{(\rho)})Z_{n+1}^{(\rho)},\; n\ge1,\; \bar Y_0=y.
$$
Also note that $(\bar{X}_n,\bar{Y}_{2n}^{(\rho)})$ is an Euler scheme at time $\Gamma_n$ of the duplicated diffusion
$(X_t,X_t^{(\rho)})_{t\ge0}$.

For numerical purpose, one usually specifies the independent i.i.d. sequences   $(Z_n)_{n\ge1}$ and $(V_n)_{n\ge 1}$ as  normally distributed so that they can be considered as the normalized increments of two independent Brownian motions $W$ and $\widetilde W$ $i.e.$
\begin{equation*}
Z_n = \frac{W_{\widetilde \Gamma_{n}}-W_{\widetilde \Gamma_{n-1}}}{\sqrt{\widetilde{\gamma}_n}}
\quad \mbox{ and }\quad V_n = \frac{\widetilde W_{\widetilde \Gamma_{n}}-\widetilde W_{\widetilde \Gamma_{n-1}}}{\sqrt{\widetilde \gamma_n}},\; n\ge 1.
\end{equation*}
{Note that in this case, $(U_n)$ is also a sequence of ${\cal N}(0,I_q)$-random variables. This implies  in particular that
\begin{equation}\label{assump-sup}
\ES[U_1^{\otimes 4}]=\ES[Z_1^{\otimes 4}]\quad\textnormal{and}\quad \ES[U_1^{\otimes 6}]=\ES[Z_1^{\otimes 6}].
\end{equation}
Since these properties simplify the result, we will assume them in the sequel of this section (see Remark \ref{exten-romb} for extensions). }  

\medskip 
We denote    $(\nu_n^{\eta,(\rho)}(\omega,dx))_{n\ge1}$ the sequence of empirical measures related to $(\bar{Y}_n^{(\rho)}(\omega))_{n\ge 1}$ (in which the weights are adapted accordingly: $\eta_1/2, \eta_1/2, \eta_2/2, \eta_2/2, \eta_3/2,  \dots$). The empirical measure   $(\bar{\nu}_n^{\eta,(\rho)}(\omega,dx))_{n\ge1}$ associated to the    Richardson-Romberg extrapolation is defined by
 \begin{eqnarray*}
\nu_n^{\eta,(\rho)}(\omega,f)&=&\frac{1}{H_n}\sum_{k=1}^n\frac{\eta_k}{2} \left(f(\bar{Y}_{2(k-1)}^{(\rho)}(\omega))+f(\bar{Y}_{2k-1}^{(\rho)}(\omega))\right)\\
\bar{\nu}_n^{\eta,(\rho)}(\omega,f)&=&(2\nu_n^{\eta,(\rho)}-\nu_n^{\eta}(\omega,f))\\
&=& \frac{1}{H_n}\sum_{k=1}^n \eta_k  \left(f(\bar{Y}_{2(k-1)}^{(\rho)}(\omega))+f(\bar{Y}_{2k-1}^{(\rho)}(\omega))-f(\bar X_k(\omega))\right) .
\end{eqnarray*}
 
Under the assumptions of Proposition \ref{proprappel}, it is clear that $\bar{\nu}_n^{\eta,(\rho)}(\omega,dx)\xrn{n\nrn}\nu(dx)$ $a.s.$. \\
Thus, in the next section, we propose to evaluate the effects of the Richardson-Romberg extrapolation on the rate of convergence of the procedure
and to explain why the uniqueness of the invariant distribution of the duplicated diffusion plays an important role in this problem. 

\subsection{Rate of convergence of the extrapolated procedure}
Throughout this section we assume  that $\eta_k=\gamma_k$ and so we will write $\nu_n$, $\nu_n^{(\rho)}$ and $\bar{\nu}_n^{(\rho)}$ instead of ${\nu}_n^{\eta}$, ${\nu}_n^{\eta,(\rho)}$ and $\bar{\nu}_n^{\eta,(\rho)}$ respectively.
We also set $(D^{3} g_{i,.,.})_{i=1}^d= D^2(\nabla_{\bf{.}}) g$ in order that the notation ${\rm Tr}(\sigma^* D^2(\nabla_{\bf{.}}) g \sigma)$ stands for the {\em vector} of $\ER^d$ defined by  ${\rm Tr}(\sigma^* D^2(\nabla_{\bf{.}}) g \sigma)=({\rm Tr}(\sigma^* D^2(\partial_{x_i} g) \sigma))_{i=1}^d$.
For a fixed matrix $\rho$, the main result about the $RR$ extrapolation is Theorem \ref{theoromberg1} below. At this stage, we do not discuss the choice of the correlation $\rho$ in this result. This point is tackled in  Proposition \ref{propoptchoice} in which we will see that the optimal choice  to reduce the asymptotic variance is atteined with  $\rho=I_q$ as soon as $\nu_\Delta$ is the unique invariant distribution of the associated duplicated diffusion. {This emphasizes the importance of the question of the uniqueness of the invariant distribution in this pathologic case studied in the previous part of the paper}.

\begin{theorem}\label{theoromberg1} Assume $\mathbf{(S_a)}$ holds for an  $a\in(0,1]$. Assume that 
$(X_t,X_t^{(\rho)})_{t\ge0}$ admits a unique invariant distribution $\mu^{(\rho)}$ (with marginals $\nu$). Let $f:\ER^d\rightarrow\ER$ be a function satisfying $\mathbf{(C(f,7))}$ and such that $\varphi_1$ defined by \eqref{eq:varphi1} satisfies $\mathbf{(C(\varphi_1,5))}$ with a solution to the Poisson equation denoted by  $g_{\varphi_1}$.  Then,

\smallskip
\noindent $\bullet$ If $\frac{\Gamma_n^{(3)}}{{\sqrt{\Gamma_n}}}\xrn{n \nrn}0$, 
$$
\sqrt{\Gamma_n}\Big(\nu_n^{(\rho)}(\omega,f)-\nu(f)\Big)\overset{n\rightarrow+\infty}{\Longrightarrow}{\cal N}\big(0; \hat{\sigma}_\rho^2\big)
$$
where 
\begin{equation}\label{eq:varianceRR}
\hat{\sigma}_\rho^2=5\int_{\ER^d}|\sigma^*\nabla g|^2d\nu-4\int_{\ER^{d}\times\ER^d} \big(  (\sigma^*\nabla g)(x)|  \rho(\sigma^*\nabla g)(y)\big)\mu^{(\rho)}(dx,dy).
\end{equation}

\smallskip
\noindent $\bullet$ If $\frac{\Gamma_n^{(3)}}{\sqrt{\Gamma_n}}\xrn{n \nrn}\widetilde{\beta}\in(0,+\infty]$, then
\begin{align*}
\sqrt{\Gamma}_n\big(\nu_n^{(\rho)}(\omega,f)-\nu(f)\big)& \stackrel{(\ER)}{\Longrightarrow} {\cal N}\big(\widetilde{\beta} \,m_g^{(2)}; \hat{\sigma}_\rho^2\big)  \;\mbox{ as } \;n\to +\infty& \textnormal{if $\widetilde{\beta}\in (0,+\infty)$,}\\
\frac{\Gamma_n}{\Gamma_n^{(3)}}(\nu_n^{(\rho)}(\omega,f)-\nu(f))&\xrn{\PE} m_g^{(2)} \;\mbox{ as } \;n\to +\infty &\textnormal{if $\widetilde{\beta}=+\infty$,\hskip 0,55 cm }
\end{align*}
where $\displaystyle m_g^{(2)}=\frac{1}{2}\left(m_{g_{\varphi_1}}+\int_{\ER^d} \varphi_2 d\nu \right)$ with

\begin{equation}\label{eq:varphi2}
\varphi_2(x)=\sum_{k=3}^6 \frac{C_k^{2(k-3)}}{k!}\ES\big[D^k g(x) b(x)^{\otimes (6-k)}(\sigma(x)U_1)^{\otimes 2(k-3)}\big].
\end{equation}
\end{theorem}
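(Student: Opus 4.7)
The approach is the classical Poisson-equation plus Lindeberg-martingale argument used for Proposition~\ref{proprappel}$(iv)$, pushed one order further and carried out jointly on the two coupled schemes $(\bar X_n)$ and $(\bar Y^{(\rho)}_n)$ (the stated asymptotic variance in fact corresponds to the Richardson-Romberg combination $\bar\nu_n^{(\rho)}=2\nu_n^{(\rho)}-\nu_n$, so I write the plan for that object). First I would develop a seventh-order Taylor expansion of $g$ along one step of each scheme. Using $\mathbb{E}[U_1^{\otimes 3}]=\mathbb{E}[U_1^{\otimes 5}]=0$ together with~\eqref{assump-sup}, the odd-order contributions cancel and
\begin{equation*}
\mathbb{E}_k\bigl[g(\bar X_{k+1})-g(\bar X_k)\bigr] = \gamma_{k+1}\mathcal{A}g(\bar X_k) + \gamma_{k+1}^2\varphi_1(\bar X_k) + \gamma_{k+1}^3\varphi_2(\bar X_k) + R^X_{k+1},
\end{equation*}
with $|R^X_{k+1}|\le C\gamma_{k+1}^4 V^{s}(\bar X_k)$ for some $s$ controlled by $\mathbf{(C(f,7))}$. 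Applied to two successive half-steps of $\bar Y^{(\rho)}$ and summed, the same expansion gives
\begin{equation*}
\mathbb{E}\bigl[g(\bar Y^{(\rho)}_{2k})-g(\bar Y^{(\rho)}_{2(k-1)})\bigm|\mathcal{F}_{2(k-1)}\bigr] = \gamma_{k}\mathcal{A}g(\bar Y^{(\rho)}_{2(k-1)}) + \tfrac{\gamma_{k}^2}{2}\varphi_1 + \tfrac{\gamma_{k}^3}{4}\varphi_2 + R^Y_{k}.
\end{equation*}
The halving of the $\varphi_1$-coefficient is exactly what $\bar\nu_n^{(\rho)}=2\nu_n^{(\rho)}-\nu_n$ exploits.

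Telescoping the two identities, dividing by $\Gamma_n$ and forming the RR combination yields
\begin{equation*}
\bar\nu_n^{(\rho)}(f)-\nu(f) = B_n + M_n + \varepsilon_n,
\end{equation*}
in which the $\varphi_1$-contributions cancel to leading order, leaving $B_n=\tfrac{\Gamma_n^{(3)}}{\Gamma_n}\cdot\tfrac12\bigl(\nu_n(\varphi_2) + \text{correction}\bigr)$. The correction is identified by re-applying the expansion to the solution $g_{\varphi_1}$ of the Poisson equation for $\varphi_1$ (this is where $\mathbf{(C(\varphi_1,5))}$ is used), and the two pieces combine to the claimed limit $m_g^{(2)}=\tfrac12(m_{g_{\varphi_1}}+\nu(\varphi_2))$. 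The residual remainders $R^X,R^Y$ and the term $\varepsilon_n$ are shown to be $o(1/\sqrt{\Gamma_n})$ using the $L^p$-moment bounds provided by Proposition~\ref{proprappel}$(i)$--$(ii)$ (themselves consequences of $\mathbf{(S_a)}$).

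For the martingale part, the leading increment reads schematically
\begin{equation*}
\Delta M_{k+1}\simeq \tfrac{1}{\Gamma_n}\Bigl(2\!\!\sum_{j=2k+1}^{2k+2}\!\!\sqrt{\gamma_{k+1}/2}\,(\sigma^{*}\nabla g)(\bar Y^{(\rho)}_{j-1})^{*}Z^{(\rho)}_{j} - \sqrt{\gamma_{k+1}}\,(\sigma^{*}\nabla g)(\bar X_k)^{*}U_{k+1}\Bigr).
\end{equation*}
Expanding the predictable bracket $\langle M\rangle_n$ using $U_k=(Z_{2k-1}+Z_{2k})/\sqrt2$ and $Z^{(\rho)}_j=\rho^{*}Z_j+T_q(\rho)V_j$, the self-covariance terms contribute $(4+1)\cdot\tfrac{1}{\Gamma_n}\!\sum\gamma_{k+1}|\sigma^{*}\nabla g|^2$ evaluated along the trajectories of each scheme, and converge by Proposition~\ref{proprappel}$(iii)$ to $5\int|\sigma^{*}\nabla g|^2\,d\nu$. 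The cross-term carries a coefficient $-4$ and per step equals, up to $o(\gamma_{k+1})$,
\begin{equation*}
-4\gamma_{k+1}\bigl((\sigma^{*}\nabla g)(\bar X_k)\bigm|\rho\,(\sigma^{*}\nabla g)(\bar Y^{(\rho)}_{2k})\bigr),
\end{equation*}
which is an ergodic average of a function on the duplicated state space, bounded by $V^r$. \textbf{The main obstacle} is passing to the limit in this cross-term: one needs the a.s.\ convergence of the joint occupation measure $\tfrac{1}{\Gamma_n}\!\sum\gamma_k\delta_{(\bar X_{k-1},\bar Y^{(\rho)}_{2(k-1)})}$ to the unique invariant distribution $\mu^{(\rho)}$ of the duplicated diffusion. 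This is precisely the place where the uniqueness hypothesis of the theorem (and thus the whole analysis of Sections~\ref{sec: d=1}--\ref{sec:dge2}) enters; without it, $\hat\sigma_\rho^2$ is not well defined and only subsequential limits are available. Once $\langle M\rangle_n\to\hat\sigma_\rho^2$ is established, the Lindeberg condition for the martingale array follows from the polynomial-in-$V^r$ bounds on $\nabla g$ and the moment assumptions on $U_1,Z_1,V_1$; this gives $\sqrt{\Gamma_n}\,M_n\Rightarrow\mathcal{N}(0,\hat\sigma_\rho^2)$. Combining with the bias $B_n$ and comparing the scales $\sqrt{\Gamma_n}$ and $\Gamma_n^{(3)}$ produces the three regimes ($\widetilde\beta=0$, $\widetilde\beta\in(0,\infty)$, $\widetilde\beta=+\infty$) stated in the theorem.
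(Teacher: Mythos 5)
Your plan reproduces the paper's own proof: a seventh-order Taylor expansion of the Poisson-equation solution $g$ yielding a decomposition into a leading martingale, two bias terms ${\cal E}_n^1,{\cal E}_n^2$, and negligible remainders; the bias constant $m_g^{(2)}$ obtained by applying the weighted version of Proposition~\ref{proprappel}$(iv)$ to $\varphi_1$ via $g_{\varphi_1}$; and the martingale CLT whose cross-covariance limit requires the ergodic theorem for the joint occupation measure of $(\bar X_{k-1},\bar Y^{(\rho)}_{2(k-1)})$, which is exactly where uniqueness of $\mu^{(\rho)}$ enters. You also correctly observe that the stated variance $\hat\sigma_\rho^2$ belongs to the Richardson--Romberg combination $\bar\nu_n^{(\rho)}=2\nu_n^{(\rho)}-\nu_n$ rather than to $\nu_n^{(\rho)}$ alone, consistent with Lemma~\ref{lemmedecomp} (which decomposes $\Gamma_n\bar\nu_n^{(\rho)}$) and flagging a typo in the theorem's display.
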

\begin{Remarque}\label{exten-romb}
$\rhd$ We recall that the result is stated under the assumption that the increments are normally distributed or more precisely under Assumption \eqref{assump-sup}. When this additional assumption fails (think for instance to $Z_1\sim\big( \frac 12(\delta_{-1}+\delta_1)\big)^{\otimes q}$), the result is remains true except for  the value of $m_g^{(2)}$ which becomes more complicated since it also depends on $\ES[Z_1^{\otimes \ell}]$, $\ell=4$ and $6$).

\noindent $\rhd$ This result extends readily to general weights sequences $(\eta_n)_{n \ge 1}$.Some technical conditions appear on the choice of weights but these conditions are natural and not restrictive (see \cite{lemairethese}). In particular we can always consider the choice $\eta_n = 1$ for which we obtain the following result: if $\frac{\Gamma_n^{(2)}}{\sqrt{\Gamma_n^{(-1)}}} \xrn{n \nrn}\widetilde{\beta}\in (0,+\infty)$, then
\begin{equation*}
\frac{n}{\sqrt{\Gamma^{(-1)}_n}} \big(\nu_n^{(\rho)}(\omega,f)-\nu(f)\big) \stackrel{(\ER)}{\Longrightarrow} {\cal N}\big(\widetilde{\beta} \,m_g^{(2)}; \hat{\sigma}_\rho^2\big)  \;\mbox{ as } \;n\to +\infty.
\end{equation*}

\noindent $\rhd$ {\sc Polynomial steps.} Let $\gamma_n = Cn^{-\mu}$, $\mu \!\in (0, 1]$. If $\mu> \frac 13$, $\Gamma^{(3)}_n \to \Gamma^{(3)}_{\infty}<+\infty$ so that $\frac{\Gamma^{(3)}_n}{\sqrt{\Gamma_n}}\to 0$ as $n\to +\infty$. If $\mu<\frac 13$, $\frac{\Gamma^{(3)}_n}{\sqrt{\Gamma_n}}\asymp n^{\frac{1-5\mu}{2}}$ (and if $\mu=\frac 13$, $\frac{\Gamma^{(3)}_n}{\sqrt{\Gamma_n}}\asymp \frac{\log n}{\sqrt{n}}$). Consequently
\[
\frac{\Gamma^{(3)}_n}{\sqrt{\Gamma_n}}\to 0\; \Longleftrightarrow \; \mu > \frac 15,\;  \frac{\Gamma^{(3)}_n}{\sqrt{\Gamma_n}}\to +\infty \; \Longleftrightarrow\; \mu < \frac 15 \; \mbox{ and }\;\frac{\Gamma^{(3)}_n}{\sqrt{\Gamma_n}}\to \widetilde \beta \!\in (0,+\infty) \; \Longleftrightarrow\; \mu = \frac 15.
\]
When $\mu =\frac15$, $\widetilde \beta = C^{\frac 52}\sqrt{5}$ and $\sqrt{\Gamma_n} \sim \frac{\sqrt{5C}}{2} n^{\frac 25}$.

As a consequence, if $\gamma_n = \eta_n = Cn^{-\frac 15}$, 
\[
n^{\frac 25} \big(\nu_n^{(\rho)}(\omega,f)-\nu(f)\big)\stackrel{(\ER)}{\Longrightarrow} {\cal N}\Big(2 C^2 \,m^{(2)}_g; \frac 45 \frac{\widehat \sigma^2_{\rho}}{C}\Big).
\]
We switch from a weak rate $n^{\frac 13}$ to $n^{\frac 25}$ $i.e.$ a ``gain" of $n^{\frac{1}{15}}$ (see figure below). The second noticeable fact is that the bias is now significantly more sensitive to the constant $C$ than in the  standard setting.
  If we minimize the $L^2$--norm of the error $\nu_n^{(\rho)}(\omega,f)-\nu(f)$ we obtain the optimal choice of $C$ as a function of both bias and standard deviation, precisely $C = \left( \frac{\widehat \sigma^2_{\rho}}{20(m_q^{(2)})^2}\right)^{\frac{1}{5}}$.
	 \begin{figure}[h!t] \label{fig1}
		 \centering \include{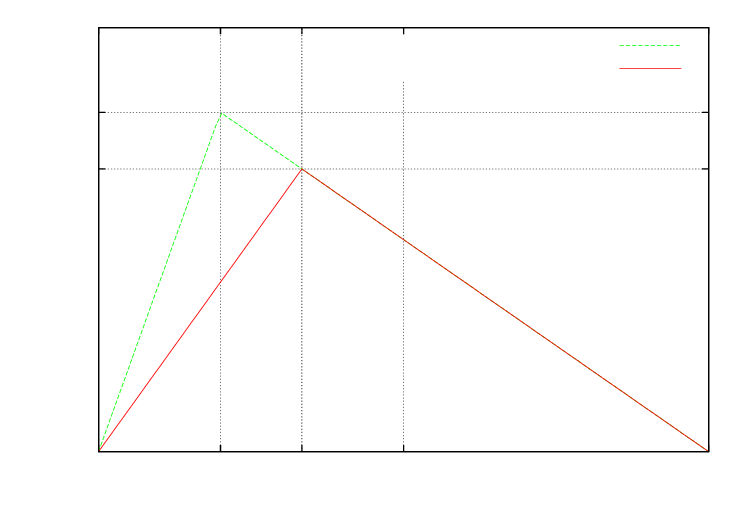}
 \end{figure} 

\end{Remarque}
\subsubsection{Optimal choice of $\rho$ and uniqueness of $\mu^{(I_d)}$}
\begin{prop}\label{propoptchoice} Let $\rho$ be an admissible correlation matrix $i.e.$ such that $\rho^*\rho\le I_q$. Assume that the duplicated diffusion $(X,X^{(\rho)})$ has a unique invariant distribution $\mu^{(\rho)}$ (so that if $\rho=I_q$, $\mu^{(I_q)}= \nu_{\Delta}$).

\smallskip
\noindent $(a)$ $\displaystyle \widehat \sigma^2_{\rho}\ge \int_{\ER^d}|\sigma^*\nabla g|^2 d\nu$.

\smallskip
\noindent $(b)$  If $\rho =0$ then $\displaystyle \widehat \sigma^2_{\rho}= 5 \int_{\ER^d}|\sigma^*\nabla g|^2 d\nu$.

\smallskip
\noindent $(c)$ If $\rho = I_q$, $\displaystyle \widehat \sigma^2_{\rho}=  \int_{\ER^d}|\sigma^*\nabla g|^2 d\nu$.

\end{prop}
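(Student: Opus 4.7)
Let me write $h := \sigma^{*}\nabla g : \ER^{d}\to\ER^{q}$ to unclutter notation, so that the formula \eqref{eq:varianceRR} reads
\[
\hat{\sigma}_\rho^{2} \;=\; 5\int_{\ER^{d}} |h|^{2}\,d\nu \;-\; 4\int_{\ER^{d}\times\ER^{d}} \bigl(h(x)\,\big|\,\rho\,h(y)\bigr)\,\mu^{(\rho)}(dx,dy).
\]
Throughout, I will repeatedly use that, by construction, $\mu^{(\rho)}$ has both marginals equal to $\nu$, so that $\int h(x)^{\otimes 2}\mu^{(\rho)}(dx,dy) = \int h(y)^{\otimes 2}\mu^{(\rho)}(dx,dy) = \int h\,h^{*}\,d\nu$.

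For claim $(b)$, the case $\rho=0$ is immediate: the cross term vanishes identically and $\hat{\sigma}^{2}_{0} = 5\int |h|^{2}d\nu$. For claim $(c)$, if $\rho = I_{q}$ then by the assumed uniqueness $\mu^{(I_{q})} = \nu_{\Delta}$, so the integral against $\mu^{(I_{q})}$ collapses to an integral over the diagonal; the cross term becomes $4\int (h(x)|h(x))\nu(dx) = 4\int |h|^{2}d\nu$, and the claimed identity $\hat{\sigma}_{I_{q}}^{2} = \int|h|^{2}d\nu$ follows by subtraction.

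For claim $(a)$, the core observation is that $I_{q}-\rho^{*}\rho \in {\cal S}^{+}(q,\ER)$ implies the operator-norm bound $|\rho u| \le |u|$ for every $u\in\ER^{q}$, since $|\rho u|^{2} = u^{*}\rho^{*}\rho u \le |u|^{2}$. Applying successively the Cauchy--Schwarz inequality in $\ER^{q}$ pointwise, then the $L^{2}$-Cauchy--Schwarz inequality on the integral against $\mu^{(\rho)}$, and finally the marginal identity above, I get
\[
\int \bigl(h(x)\,\big|\,\rho h(y)\bigr)\mu^{(\rho)}(dx,dy) \le \int |h(x)|\,|h(y)|\,\mu^{(\rho)}(dx,dy) \le \Bigl(\int |h|^{2}d\nu\Bigr)^{1/2}\Bigl(\int |h|^{2}d\nu\Bigr)^{1/2} = \int |h|^{2}d\nu.
\]
Multiplying by $-4$ and adding $5\int |h|^{2}d\nu$ yields $\hat{\sigma}_{\rho}^{2}\ge \int|h|^{2}d\nu$, which is $(a)$.

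I do not anticipate a real obstacle here: all three claims reduce to bookkeeping with the marginals of $\mu^{(\rho)}$ plus the sub-unit operator-norm of an admissible $\rho$. The only subtle point worth emphasizing is that the lower bound in $(a)$ is saturated by the choice $\rho = I_{q}$ precisely because the marginal-plus-Cauchy--Schwarz chain of inequalities becomes sharp on the diagonal $\mu^{(I_{q})} = \nu_{\Delta}$; this both justifies declaring $\rho=I_{q}$ optimal and explains why the uniqueness analysis of $\nu_{\Delta}$ carried out earlier in the paper is exactly the ingredient needed to make the optimization meaningful.
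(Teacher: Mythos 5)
Your proof is correct and follows essentially the same route as the paper's: claims $(b)$ and $(c)$ are immediate from the formula for $\hat\sigma^2_\rho$ and the identification $\mu^{(I_q)}=\nu_\Delta$, and $(a)$ follows from the double application of Cauchy--Schwarz (pointwise in $\ER^q$ and then in $L^2(\mu^{(\rho)})$), the equality of the marginals of $\mu^{(\rho)}$ with $\nu$, and the operator bound $|\rho u|\le|u|$. The only cosmetic difference is that you apply $|\rho u|\le|u|$ before the $L^2$ Cauchy--Schwarz step, whereas the paper defers it to the last inequality.
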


\noindent {\em Proof.} Claims $(b)$ and $(c)$ being obvious thanks to  (\ref{eq:varianceRR}), we only prove $(a)$. Keeping in mind that both marginals $\mu^{(\rho)}(\ER^d\times dy)$ and $\mu^{(\rho)}(dx\times \ER^d)$ are equal to $\nu$, one derives  thanks to Schwarz's Inequality  (once on $\ER^d$ and once on $L^2(\mu)$) from the expression~(\ref{eq:varianceRR}) of the asymptotic variance $\widehat \sigma^2_{\rho}$ that
\begin{eqnarray*}
\widehat \sigma^2_{\rho}&\ge & 5 \int_{\ER^d}|\sigma^*\nabla g|^2 d\nu - 4\left[ \int_{\ER^{d}\times\ER^d} |\sigma^*\nabla g|^2(x) \mu^{(\rho)}(dx,dy) \right]^{\frac 12} \left[ \int_{\ER^{d}\times\ER^d} |\rho\sigma^*\nabla g|^2(y) \mu^{(\rho)}(dx,dy)  \right]^{\frac 12}\\
&=&  5 \int_{\ER^d}|\sigma^*\nabla g|^2 d\nu - 4\left[ \int_{\ER^{d}} |\sigma^*\nabla g|^2(x)\nu(dx) \right]^{\frac 12} \left[ \int_{\ER^{d}} |\rho\sigma^*\nabla g|^2(y) \nu(dy)  \right]^{\frac 12}\\
&\ge&  5 \int_{\ER^d}|\sigma^*\nabla g|^2 d\nu - 4\int_{\ER^d}|\sigma^*\nabla g|^2 d\nu = \int_{\ER^d}|\sigma^*\nabla g|^2 d\nu 
\end{eqnarray*}
where we used in the last inequality that $|\rho u|^2\le |u|^2$.  

\noindent {The previous result says that the structural asymptotic variance of the $RR$ estimator is always greater than that of the standard estimator but can be equal if the Brownian motions are equal. This condition is in fact almost necessary. Actually, thanks to the Pythagorean identity,
\begin{align*}
\sigma^2_{\rho}&=5 \int_{\ER^d}|\sigma^*\nabla g|^2 d\nu+2\int_{\ER^{d}\times\ER^d} |\sigma^*\nabla g(x)-\rho\sigma^*\nabla g(y)|^2\mu^{(\rho)}(dx,dy)\\
&-2\int_{\ER^{d}\times\ER^d}|\sigma^*\nabla g(x)|^2\nu(dx)-2\int_{\ER^{d}\times\ER^d}|\rho\sigma^*\nabla g(y)|^2\nu(dy).
\end{align*}
Then, since $\rho^*\rho\le I_q$, a necessary condition to obtain $\sigma^2_{\rho}=\int_{\ER^d}|\sigma^*\nabla g|^2 d\nu$
is 
$$|\rho\sigma^*\nabla g(y)|=|\sigma^*\nabla g(y)|\quad \nu(dy)\mbox{-}a.e.$$
When $\rho^*\rho<I_q$, this equality can not hold except if $\sigma^* \nabla g(y)=0$ $\nu(dy)$-$a.e.$}

\subsection{Proof of Theorem \ref{theoromberg1}}
\subsubsection{Preliminaries}
Without loss of generality, we assume that $f$ satisfies $\nu(f)=0$ so that $f={\cal A}g$ under $\cfk$.
We denote by $\gamma^{(r)}$ the sequence defined by $\gamma^{(r)}_k=\gamma^r_k$.
\begin{lemme}\label{lemmedecomp} Assume that $f$ satisfies $\mathbf{(C(f,7))}$ and denote by $g$ the solution 
to the Poisson equation ${\cal A}g=f$. Then,
\begin{align}
{\Gamma_n}\bar{\nu}_n^{(\rho)}(\omega,f)=&\,2\big(g(\bar{Y}_{2n})\!-\!g(\bar{Y}_0)\big)\!-\!\big(g(\bar{X}_n)\!-\!g(\bar{X}_0)\big)\!-\!\sum_{k=1}^n\!\sqrt{\gamma}_k \big(\sqrt{2}\Delta M_k^{(2)}-\Delta M_k^{(1)}\big)\label{terme1decomp}
\\
&-{\cal E}_n^1-
{\cal E}_n^{2}+N_{n}+R_{n}
\end{align}
where
\begin{align*}
& \Delta M_k^{(1)}=\psg \nabla g(\bar{X}_{k-1})|\sigma(\bar{X}_{k-1}) U_k\psd,\\
&\Delta M_k^{(2)}=\psg \nabla g(\bar{Y}_{2(k-1)})|\sigma(\bar{Y}_{2(k-1)}) Z_{2k-1}\psd+\psg \nabla g(\bar{Y}_{2k-1})|\sigma(\bar{Y}_{2k-1} Z_{2k}\psd,\\
&{\cal E}_n^1=2\sum_{k=1}^n \left(\frac{\gamma_k}{2}\right)^2 \left(\varphi_1(\bar{Y}_{2(k-1)})+\ES[\varphi_1(\bar{Y}_{2k-1})|{\cal F}_{k-1}]\right)-\sum_{k=1}^n \gamma_k^2 \varphi_1(\bar{X}_{k-1}),\\
&{\cal E}_n^2=2\sum_{k=1}^n \left(\frac{\gamma_k}{2}\right)^3 \left(\varphi_2(\bar{Y}_{2(k-1)})+\ES[\varphi_2(\bar{Y}_{2k-1})|{\cal F}_{k-1}]\right)-\sum_{k=1}^n \gamma_k^3 \varphi_2(\bar{X}_{k-1})
\end{align*}
with $\varphi_1$ and $\varphi_2$  defined by \eqref{eq:varphi1} and \eqref{eq:varphi2},\\

%
\noindent $(N_{n})$ is defined by
$$ N_{n}=\sum_{k=1}^n 2\left(\Delta N(\bar{Y}_{2(k-1)},Z_{2k-1},\frac{\gamma_k}{2})+\Delta N(\bar{Y}_{2k-1},Z_{2k},\frac{\gamma_k}{2})\right)-\Delta N(\bar{X}_{k-1},Z_{2k-1},\frac{\gamma_k}{2})$$
where 
$\Delta N(x,U,\gamma)= H(x,U,\gamma)-\ES_x[H(x,U,\gamma)]$ 
and 
\begin{align*}
&H(x,U,\gamma)=\frac{\gamma}{2}D^2 g(x)(\sigma(x)U)^{\otimes 2}+\frac{1}{6}\sum_{\ell=0}^2 C_{3}^{3-\ell}\gamma^{\frac{\ell+3}{2}}D^3 g(x)b(x)^{\otimes \ell}(\sigma(x) U)^{\otimes (3-\ell)}\\
&+\frac{1}{24}\sum_{\ell=0}^1 {\gamma^{\frac{\ell+4}{2}}}C_4^{4-\ell}D^4g(x) b(x)^{\otimes \ell}(\sigma(x)U)^{\otimes (4-\ell)}+\gamma^3\sum_{\ell=4}^6 \frac{C_\ell^{6-\ell}}{\ell!}D^\ell g(x) b(x)^{\otimes (6-\ell)}(\sigma(x)U)^{\otimes \frac{\ell}{2}}.
\end{align*}
Finally, if $\mathbf{(S_a)}$ holds, the sequence $(R_{n})_{n\ge 1}$  satisfies the following property: there exists $r>0$
such that, $a.s.$, for every $n\ge1$,
\begin{equation}\label{eq:contRn}
\ES[|\Delta R_n||{\cal F}_{n-1}]\le C \gamma_n^{\frac{7}{2}} \left(V^r(\bar{X}_{n-1})+V^r(\bar{Y}_{2(n-1)})+V^r(\bar{Y}_{2n-1})\right)
\end{equation}
where $\Delta R_n=R_n-R_{n-1}$.
\end{lemme}

\begin{Remarque} The above decomposition is built as follows: the second term of \eqref{terme1decomp} is the main martingale component of the decomposition whereas ${\cal E}_{n,1}$  contains the first order discretization error. Thanks to the Richardson-Romberg extrapolation, ${\cal E}_{n,1}$ is in fact negligible when $n\rightarrow+\infty$. When the step sequence decreases fast (Theorem\ref{theoromberg1}$(i)$), the rate of convergence is ruled by the main martingale component. In Theorem~\ref{theoromberg1}$(ii)$, the rate is ruled by ${\cal E}_{n,1}$ and ${\cal E}_{n,2}$.  Finally, $N_{n}$ contains all the negligible martingale terms.
\end{Remarque}
\begin{proof}  Owing to $\mathbf{(C(f,7))}$, to the Taylor formula and to the fact that $\ES[D^2(x)(\sigma(x)U_1)^{\otimes 2}]={\rm Tr}(\sigma^*(x)D^2g(x)\sigma(x))$, we have
\begin{align}
g(\bar{X}_k)&=g(\bar{X}_{k-1})+\gamma_k{\cal A} g(\bar{X}_{k-1})+\sqrt{\gamma_k}\Delta M_{k,1}\\
&+\frac{1}{2}\left(D^2(\bar{X}_{k-1})(\sigma(\bar{X}_{k-1})U_k)^{\otimes 2}-\ES[D^2(\bar{X}_{k-1})(\sigma(\bar{X}_{k-1})U_k)^{\otimes 2}|{\cal F}_{k-1}]\right)\\
&+\sum_{l=3}^5 D^l g(\bar{X}_{k-1})\left(\gamma_k b(\bar{X}_{k-1})+\sqrt{\gamma_k}\sigma(\bar{X}_{k-1})U_k\right)^{\otimes l}\label{six}\\
&+D^7 g(\xi_k)\left(\gamma_k b(\bar{X}_{k-1})+\sqrt{\gamma_k}\sigma(\bar{X}_{k-1})U_k\right)^{\otimes 7}\label{sept}
\end{align}
where $\xi_k\in[\bar{X}_{k-1},\bar{X}_{k}]$. The fact that $|\nabla V|^2\le CV$ implies that $\sqrt{V}$ is a Lipschitz continuous function with Lipschitz constant denoted by $[\sqrt{V}]_{\rm Lip}$.
Then, setting $\|D^7 g(x)\|=\sup_{|\alpha|=7}|\partial_\alpha g(x)|$ and using Assumption $\mathbf{(C(f,7))}$, we have
\begin{equation}\label{contevolution}
\|D^7g(\xi_k)\|\le C (\sqrt{V}(\xi_k))^{2r}\le C(\sqrt{V}(\bar{X}_{k-1})+[\sqrt{V}]_{\rm Lip}|\Delta \bar{X}_{k}|)^{2r}
\end{equation}
where $\Delta \bar{X}_{k}=\gamma_k b(\bar{X}_{k-1})+\sqrt{\gamma_k}\sigma(\bar{X}_{k-1}) U_k$. Then, owing to the elementary inequality $|a+b|^p\le c_p(|a|^p+|b|^p)$ and to Assumption $\mathbf{(S_a)}$, it follows that there exists $r>0$ such that
$$
\ES[|\Delta R_n||{\cal F}_{k-1}]\le C\gamma_k^\frac{7}{2} V^r(\bar{X}_{k-1}).
$$
Then we plug this control into the above Taylor expansion and to compensate the terms of~\eqref{six} when necessary.
An appropriate (tedious)  grouping of the terms yields:
\begin{align*}
\gamma_k{\cal A} g(\bar{X}_{k-1})&=g(\bar{X}_k)-g(\bar{X}_{k-1})-\sqrt{\gamma_k}\Delta M_{k,1}\\
&-\gamma_k^2\varphi_1(\bar{X}_{k-1})-\gamma_k^3\varphi_2(\bar{X}_{k-1})-\Delta N(\bar{X}_{k-1},U_k,\gamma_k)-\Delta R_{n}
\end{align*}
where $R_{n,2}$ satisfies \eqref{eq:contRn}. Making the same development for ${\cal A} g(\bar{Y}_{2(k-1)})$ and for ${\cal A} g(\bar{Y}_{2k-1})$ and summing over $n$ yield the announced result.
\end{proof}
\begin{lemme} \label{lemme:MT}Let $a\in(0,1]$ such that $\mathbf{(S_a)}$ holds. Assume that 
$(X_t,X_t^{(\rho)})_{t\ge0}$ admits a unique invariant distribution $\mu^{(\rho)}$. Let $g$ be a ${\cal C}^1$-function such that
$|\nabla g|\le CV^r$ where $r\!\in \ER_+$. Then, 
$$
\frac{1}{\sqrt{\Gamma}_n}\sum_{k=1}^n\sqrt{\gamma}_k(\sqrt{2}\Delta M_k^{(2)}-\Delta M_k^{(1)})\overset{n\nrn}{\Longrightarrow}
\hat{\sigma}_\rho^2.
$$
\end{lemme}
\begin{proof} 
Let $\{\xi_{k,n},k=1,\ldots,n,n\ge1\}$be the triangular array of $({\cal F}_k)$-martingale increments defined by
$$
\xi_{k,n}=\sqrt{\frac{\gamma_k}{\Gamma_n}}(\sqrt{2}\Delta M_k^{(2)}-\Delta M_k^{(1)}).$$ 
Let us show that
$$
\sum_{k=1}^n\ES[|\xi_{k,n}|^2|{\cal F}_{k-1}]\xrn{n\nrn}\hat{\sigma}_\rho^2.
$$ First, using that $\Sigma_{U_1}=I_q$, we obtain that for every $k\ge1$,
$$
\ES[|\Delta M_k^{(1)}|^2|{\cal F}_{k-1}]=|\sigma^*\nabla g(\bar{X}_{k-1})|^2.
$$
Since $x\mapsto|\sigma^*\nabla g|^2(x)$ is a continuous function such that $|\sigma^*\nabla g|^2\le CV^r$ for a positive $r$, it follows from Proposition \eqref{proprappel} that
\begin{equation}\label{mart:conv1}
\frac{1}{\Gamma_n}\sum_{k=1}^n\gamma_k \ES[|\Delta M_k^{(1)}|^2|{\cal F}_{k-1}]\xrn{\nrn}\int_{\ER^d}|\sigma^*\nabla g|^2(x)\nu(dx).
\end{equation}
Similarly,
\begin{equation*}
\ES[|\Delta M_k^{(2)}|^2|{\cal F}_{k-1}]=|\sigma^*\nabla g(\bar{Y}_{2(k-1)})|^2+\ES[|\sigma^*\nabla g(\bar{Y}_{2k-1})|^2|{\cal F}_{k-1}].
\end{equation*}
It follows that 
\begin{equation}\label{mart:conv21}
\frac{1}{\Gamma_n}\sum_{k=1}^n\gamma_k \ES[|\Delta M_k^{(2)}|^2|{\cal F}_{k-1}]=2\nu_n^{(\rho)}(\omega,|\sigma^*\nabla g|^2)-\frac{1}{\Gamma_n}\sum_{k=1}^n\zeta_k
\end{equation}
where $(\zeta_k)$ is a sequence of $({\cal F}_k)$-martingale increments defined by
$$
\zeta_k=\gamma_k\left(|\sigma^*\nabla g(\bar{Y}_{2k-1})|^2-\ES[|\sigma^*\nabla g(\bar{Y}_{2k-1})|^2|{\cal F}_{k-1}]\right).
$$
Using that $|\sigma^*\nabla g|^2\le CV^r$ for a positive real    number $r$, we obtain by similar arguments to those used in~\eqref{contevolution}
that $\ES[|\zeta_k|^2|{\cal F}_{k-1}]\le CV^{2r}(\bar{Y}_{2(k-1)})$. We derive from Proposition~\ref{proprappel}$(i)$ applied with $\theta_k=\frac{1}{\Gamma_k^2}$ that
$$
\sum_{k=1}^{+\infty}\ES[\left|\frac{\zeta_k}{\Gamma_k}\right|^2|{\cal F}_{k-1}]\le C\gamma_1\sum_{k=1}^{+\infty}\frac{\gamma_k}{\Gamma_k^2}V^{2r}(\bar{Y}_{2(k-1)})<+\infty
$$
since 
$$\sum_{k\ge1}\frac{\gamma_k}{\Gamma^2_k}\le 1+\sum_{k=2}^{+\infty}\int_{\Gamma_{k-1}}^{\Gamma_{k}}\frac{ds}{s^2}\le1+\int_{\Gamma_1}^{+\infty}\frac{ds}{s^2}<+\infty.
$$

As a consequence $(\sum_{k=1}^n\frac{\zeta_k}{\Gamma_k})_{n\ge1}$ is a convergent martingale and  the Kronecker Lemma then implies that
$\frac{1}{\Gamma_n}\sum_{k=1}^n \zeta_k\xrn{n\nrn}0$ $a.s$. Thus, we deduce from \eqref{mart:conv21} combined with Proposition
\ref{proprappel} that
\begin{equation}\label{mart:conv2}
\frac{1}{\Gamma_n}\sum_{k=1}^n\gamma_k \ES[|\Delta M_k^{(2)}|^2|{\cal F}_{k-1}]\xrn{n\nrn}2\nu(|\sigma^*\nabla g|^2)\quad a.s.
\end{equation}
Finally, we have to manage the cross-product: keeping in mind the construction of the noises of the Euler schemes (see \eqref{noise1}
and \eqref{noise2}, we have:
\begin{align*}
&\sqrt{2}\ES[\Delta M_k^{(1)}\Delta M_{k,2}|{\cal F}_{k-1}]=\psg (\sigma^*\nabla g)(\bar{X}_{k-1})|\rho(\sigma^*\nabla g)(\bar{Y}_{2(k-1)})\psd\\
&+\psg (\sigma^*\nabla g)(\bar{X}_{k-1})|\rho(\sigma^*\nabla g)(\bar{Y}_{2k-1})\psd-\gamma_k^{-1}\zeta_k^{(2)}
\end{align*}
where 
$$\zeta_k^{(2)}=\gamma_k\left(\psg (\sigma^*\nabla g)(\bar{X}_{k-1})|\rho(\sigma^*\nabla g)(\bar{Y}_{2k-1})\psd-\ES[\psg (\sigma^*\nabla g)(\bar{X}_{k-1})|\rho(\sigma^*\nabla g)(\bar{Y}_{2k-1})\psd|{\cal F}_{k-1}]\right)$$
so that
\begin{equation}\label{martconv31}
\frac{1}{\Gamma_n}\sum_{k=1}^n \sqrt{2}\ES[\Delta M_k^{(1)}\Delta M_{k,2}|{\cal F}_{k-1}]=\mu_n^{(1)}(\psi)+\mu_n^{(2)}(\psi)-\frac{1}{\Gamma_n}\sum_{k=1}^n\zeta_k^{(2)}
\end{equation}
where $\psi:\ER^{2d}\rightarrow\ER$ is defined by $\psi(x,y)=\psg  \sigma^*\nabla g(x)|\rho(\sigma^*\nabla g)(y)\psd $
and for every Borel function $f:\ER^{2d}\rightarrow\ER$,
$$
\mu_n^{(1)}(f)=\frac{1}{\Gamma_n}\sum_{k=1}^n \gamma_k f(\bar{X}_{k-1},\bar{Y}_{2(k-1)})\quad\textnormal{and}\quad
\mu_n^{(2)}(f)=\frac{1}{\Gamma_n}\sum_{k=1}^n \gamma_k f(\bar{X}_{k-1},\bar{Y}_{2k-1}).
$$
By straightforward adaptations of the proof of Proposition \ref{proprappel}, we can show that if $(X_t,X_t^{(\rho)})$
has a unique invariant distribution $\mu^{(\rho)}$ then, for every continuous function $f$ such that 
$f\le CV^r$ with $r>0$,
$$\mu_n^{(i)}(\omega,f)\xrn{n\nrn}\mu^{(\rho)}(f) \quad a.s. \quad\textnormal{with $i=1,2$}.$$
As a consequence, $\mu_n^{(1)}(\psi)+\mu_n^{(2)}(\psi)\xrn{n\nrn}2\mu(\psi)$ $a.s$. Finally, by  martingale arguments similar to those used for $(\zeta_k)$, one checks that $\Gamma_n^{-1}\sum_{k=1}^n \zeta_k^{(2)}\xrn{n\nrn}0$ $a.s.$
Thus, by \eqref{mart:conv1}, \eqref{mart:conv2} and \eqref{martconv31}, we obtain 
that
$$    \sum_{k=1}^n\ES[|\xi_{k,n}|^2|{\cal F}_{k-1}]\xrn{n\nrn}5\nu(|\sigma^*\nabla g|^2)-4\mu(\psi)=\hat{\sigma}_\rho^2.$$
Then, the result follows from the CLT for arrays of martingale increments provided that a Lindeberg-type condition is satisfied (see \cite{hall}, Corollary 3.1).  To be precise, it is enough to prove that there exists $\delta>0$ such that
\begin{equation}\label{eq:lindeberg}
\sum_{k=1}^n\ES[|\xi_{k,n}|^{2+\delta}|{\cal F}_{k-1}]\xrn{n\nrn}0\quad a.s.
\end{equation}
Using Assumption $\mathbf{(S_a)}$ and the fact $|\nabla g|\le C V^r$ ($r>0$), one can check that there exists $r>0$ such that
$$\ES[|\xi_{k,n}|^{2+\delta}|{\cal F}_{k-1}]\le C\frac{\gamma_k^{1+\delta}}{\Gamma_n^{1+\delta}}\left(V^r(\bar{X}_{k-1})+V^r(\bar{Y}_{2k-1})+V^r(\bar{Y}_{2(k-1)})\right).$$
Thus,
$$\sum_{k=1}^n\ES[|\xi_{k,n}|^{2+\delta}|{\cal F}_{k-1}]\le C\frac{\Gamma_n^{(1+\delta)}}{\Gamma_n^{1+\delta}}\left(\nu_n^{\gamma^{(1+\delta)}}(V^r)+\nu_n^{\gamma^{(1+\delta)},(\rho)}(V^r)\right).$$
Checking easily that $\frac{\Gamma_n^{(1+\delta)}}{\Gamma_n^{1+\delta}}\xrn{n\nrn}0$, \eqref{eq:lindeberg} follows from Proposition 
\ref{proprappel}$(ii)$.
\end{proof}
\begin{lemme}\label{lemme:EF} Let $a\in(0,1]$ such that $\mathbf{(S_a)}$ holds. Assume that 
$(X_t)$ admits a unique invariant distribution $\nu$. Assume $\cfk$ and that $\Gamma_n^{(3)}\xrn{n\nrn}+\infty$. Then,\\
(i) If $\varphi_1$ defined by \eqref{eq:varphi1} satisfies $\mathbf{({ C}(\varphi_1,5))}$ then, 
$$\frac{1}{\Gamma_n^{(3)}}{\cal E}_{n,1}\xrn{\nrn}-\frac{1}{2}m_{g_{\varphi_1}}^{(1)}\quad a.s.$$\\
(ii)  If the derivatives of $g$ up to order $6$ are continuous and dominated by $V^r$ (with $r>0$),
$$\frac{1}{\Gamma_n^{(3)}}{\cal E}_{n,2}\xrn{\nrn}-\frac{1}{2}\nu(\varphi_2)\quad a.s.$$
\end{lemme}
\begin{proof} \textit{(i)} Writing
$$2\sum_{k=1}^n \left(\frac{\gamma_k}{2}\right)^2 \left(\varphi_1(\bar{Y}_{2(k-1)})+\ES[\varphi_1(\bar{Y}_{2k-1})|{\cal F}_{k-1}]\right)=
\sum_{k=1}^n \frac{\gamma_k^2}{2}\left(\varphi_1(\bar{Y}_{2(k-1)})+\varphi_1(\bar{Y}_{2k-1})\right)+\sum_{k=1}^n \frac{\gamma_k^2}{2}\Delta T_k $$
with  $\Delta T_k$ being a martingale increment defined by   $\Delta T_k=\ES[\varphi_1(\bar{Y}_{2k-1})|{\cal F}_{k-1}]-\varphi_1(\bar{Y}_{2k-1})$, one obtains that 
$${\cal E}_{n,1} =
\Gamma_n^{(2)}\left[(\nu_n^{\gamma^{(2)},(\rho)}-\nu)(\varphi_1)-(\nu_n^{\gamma^{(2)}}-\nu)(\varphi_1)\right]+\sum_{k=1}^n \frac{\gamma_k^2}{2}\Delta T_k .$$
Applying Theorem V.3 of \cite{lemairethese} (which is an extension of Proposition \ref{proprappel}$(iv)$ to general weights) with $\eta_k=\gamma_k^2$ and $q^*=4$, we obtain that
$$\frac{\Gamma_n^{(2)}}{\Gamma_n^{(3)}} (\nu_n^{\gamma^{(2)}}-\nu)(\varphi_1)\xrn{n\nrn} m_{g_{\varphi_1}}\in\ER\quad\textnormal{in probability.}$$
Similarly, applying this result to the Euler scheme with half-step, we have:
$$\frac{\Gamma_n^{(2)}}{\Gamma_n^{(3)}} [(\nu_n^{\gamma^{(2),(\rho)}}-\nu)(\varphi_1)]= \frac{1}{2}\frac{\Gamma_n^{(2)}}{\sum_{k=1}^n\gamma_k^2.\frac{\gamma_k}{2}} [(\nu_n^{\gamma^{(2),(\rho)}}-\nu)(\varphi_1)]\xrn{n\nrn} \frac{1}{2}m_{g_{\varphi_1}}\in\ER\quad\textnormal{in probability.}$$
Thus, it remains to show that the martingale term is negligible. We set $\theta_k=\frac{\gamma_k^3}{\Gamma_k^{(3)^2}}$. Using that $(\gamma_k)$ is non-increasing, one checks that $(\theta_n)$ is non-increasing and that $\sum\theta_k\gamma_k<+\infty$. Since $|\varphi_1|\le CV^r$ with $r>0$, it follows from Proposition \ref{proprappel} that 
$$\sum_{k\ge1} \frac{\gamma_{k}^4}{(\Gamma_k^{(3)})^2}\ES[|\varphi_1|^2(\bar{Y}_{2k-1})]<+\infty.
$$
This implies that the martingale $\sum \frac{\gamma_k^2}{\Gamma_k^{(3)}}\Delta T_k$ is $a.s.$ convergent so that  the Kronecker lemma yields
$\frac{1}{\Gamma_n^{(3)}}\sum_{k=1}^n\gamma_k^2 \Delta T_k\xrn{n\nrn}0$ $a.s.$. The first assertion follows.

\smallskip
\noindent \textit{(ii)} Remark that
$$
{\cal E}_{n,2}=\frac{1}{2}\nu_n^{\gamma^{(3)},(\rho)}-\nu_n^{\gamma^{(3)}}(\omega,\varphi_2)+\sum_{k=1}^n \frac{\gamma_k^3}{4}T_k.
$$
Under the assumptions, $\varphi_2$ is continuous and dominated by $V^r$ with a positive $r$. Then, since $\Gamma_n^{(3)}\xrn{n\nrn}+\infty$, $(\nu_n^{\gamma^{(3)},(\rho)}(\varphi_2))_{n\ge1}$ and $(\nu_n^{\gamma^{(3)}}(\omega,\varphi_2))_{n\ge1}$ converge to $\nu(\varphi_2)$. With some similar arguments as previously, one checks that the martingale term is negligible and the second assertion follows.
 \end{proof}
\subsubsection{Proof of Theorem \ref{theoromberg1}}
{For the sake of simplicity, we choose to give the proof of Theorem \ref{theoromberg1} only when $\Gamma_n^{(3)}\xrn{n\nrn}+\infty$. Note that if $\gamma_n=Cn^{-\mu}$, this corresponds
to $\mu\le 1/3$, $i.e.$ the case where the Romberg extrapolation really increases the rate of convergence (see Remark \ref{exten-romb}).}
\smallskip

By the decomposition of Lemma  \ref{lemmedecomp} and the convergences established in Lemmas \ref{lemme:MT} and \ref{lemme:EF}, one checks that it is now enough to prove the following points:
\begin{equation}\label{eq:remainder1}
\frac{\Theta_n}{{\Gamma_n}}\left(2\left(g(\bar{Y}_{2n})-g(\bar{Y}_0\right)-(g(\bar{X}_n)-\bar{X}_0)\right)\xrn{\PE}0\quad \textnormal{as $n\nrn$},
\end{equation}
\begin{equation}\label{remainder2}
\frac{\Theta_n}{\Gamma_n} N_n\xrn{\PE}0\quad\textnormal{and}\quad \frac{\Theta_n}{{\Gamma_n}} R_n\xrn{\PE}0\quad \textnormal{as $n\nrn$}
\end{equation}
with $\Theta_n=\sqrt{\Gamma_n}\vee \frac{\Gamma_n}{\Gamma_n^{(3)}}$. 

\medskip
For \eqref{eq:remainder1}, the result is obvious when $g$ is bounded. Otherwise, we use Lemma 3 of \cite{LP2} which implies in particular that for every $p>0$, $\ES[V^p(\bar{X}_n)]\le C_p \Gamma_n$. By Jensen's inequality, this implies that for every $r>0$ and $\alpha\in(0,1]$, there exists a constant $C>0$ such that
$$
\forall \,n\ge1,\quad \ES[V^r(\bar{X}_n)]\le  \big(\ES[V^{\frac{r}{\alpha}}(\bar{X}_n)]\big)^{\alpha} \le C_{\frac{r}{\alpha}} ^{\alpha}\Gamma_n^{\alpha}.
$$
Thus, since the same property holds for the $(\bar{Y}_n)$ and since $|g|\le CV^r$ with $r>0$, \eqref{remainder2} follows taking $\alpha\in(0,1/2)$.\\

\noindent For the first assertion of \eqref{remainder2}, we use a martingale argument. We denote by 
$\{\pi_{k,n},k=1,\ldots,n,n\ge1\}$ the triangular array of $({\cal F}_k)$-martingale increments defined by
$$\pi_{k,n}=\frac{\Delta N_k}{\sqrt{\Gamma_n}}.$$ 
Then, in order to prove the convergence in probability of $(N_n/\sqrt{\Gamma_n})$ to $0$, we use the CLT for martingale increments which says that, since a Lindeberg-type condition holds (we do not prove this point, see Proof of Lemma \ref{lemme:MT} for a similar argument), it is enough to show that
\begin{equation}\label{eq:TCL0}
\sum_{k=1}^n\ES[|\pi_{k,n}|^2|{\cal F}_{k-1}]\xrn{n\nrn}0\quad a.s.
\end{equation}
Under the assumptions on $g$ and on the coefficients, one checks that there exists $r>0$ such that 
$$\sum_{k=1}^n\ES[|\pi_{k,n}|^2|{\cal F}_{k-1}]\le C\frac{1}{\Gamma_n}\sum_{k=1}^n \gamma_k^2 \left(V^r(\bar{X}_{k-1})+V^r(\bar{Y}_{2(k-1)})+V^r(\bar{Y}_{2k-1})\right).$$
By Proposition \ref{proprappel}, $\sup_{n\ge1} \left(\nu_n(\omega,V^r)+\nu_n^{(\rho)}(\omega,V^r)\right)<+\infty$. Assertion \eqref{eq:TCL0} follows.\\

\noindent As concerns $R_n$, it follows from a martingale argument that
$$\frac{1}{{\Gamma}_n^{(3)}}\sum_{k=1}^n\left(\Delta R_k-\ES[\Delta R_k|{\cal F}_{k-1}]\right)\xrn{\PE}0\quad\textnormal{as $n\rightarrow+\infty$}.$$
Now, since $\sup_{n\ge1} \left(\nu_n^{\gamma^3}(\omega,V^r)+\nu_n^{\gamma^3, (\rho)}(\omega,V^r)\right)<+\infty$ $a.s.$ and since $\gamma_n\xrn{n\nrn}0$, we deduce  that 
$$ \frac{1}{\Gamma_n^{(3)}}\sum_{k=1}^n\ES[\Delta R_k|{\cal F}_{k-1}]\xrn{\PE}0.$$
The last assertion follows.

\appendix
\section{Hypo-ellipticity of the correlated duplicated system}
It is a well-known fact that, for a Markov process, the strong Feller property combined with some irreducibility of the transitions
implies uniqueness of the invariant distribution (see $e.g.$ \cite{daprato}, Theorem 4.2.1).  For a diffusion process with smooth coefficients, such properties hold if it  satisfies the hypoelliptic H\"ormander assumption (see \cite{hormander1,hormander2}) and if the deterministic system related to the stochastic differential system (written in the Stratanovich sense) is controllable. In fact, both properties can be transferred from the original $SDE$ to the duplicated system so that its invariant distribution is also unique. The main result of this section is Proposition \ref{prop:hormanderdup}. Before, we need to introduce some H\"ormander-type notations. First, written in a Stratonovich way, $X$ is a solution to 
\begin{eqnarray}\label{sdestrat}
dX_t&=&A_0(X_t)dt+\sum_{j=1}^q A_j(X_t)\circ dW_t^j
\end{eqnarray} 
where $A_0,\ldots A_q $ are  vectors fields on $\ER^d$ defined by\footnote{With a standard abuse of notation,
we identify the vectors fields and the associated differential operators.}:
$$A_0(x)=\sum_{i=1}^d \left[b_i(x)-\frac{1}{2}\sum_{l,j}\sigma_{l,j}(x)\partial_{x_j}\sigma_{i,l}(x)\right]\partial_{x_i}$$
and for every $j\in\{1,\ldots,q\}$:
$$A_j(x)=  \sum_{i=1}^{d}\sigma_{i,j}(x)\partial_{x_i}.$$
For the sake of simplicity, we assume that $b$ and $\sigma$ are ${\cal C}^\infty$ on $\ER^d$ with bounded derivatives. We will also assume the following H\"ormander condition at each point: there exists $N\in\EN^*$ such that $\forall x\!\in\ER^d$,
\begin{equation}\label{hormand_assump}
 {\rm dim}\left({\rm Span}\left\{A_1(x),A_2(x),\ldots,A_q(x),\;\textnormal{L. B. of length $\le N$ of the $A_j(x)$'s  } , 0\le j\le q\right\}\right)=d
\end{equation}
where ``L.B.'' stands for Lie Brackets.
The above assumptions imply that for every $t>0$ and $x\!\in\ER^d$, $P_t(x,.)$ admits a density $p_t(x,.)$ $w.r.t.$ the Lebesgue measure and that $(x,y)\mapsto p_t(x,y)$ is ${\cal C}^\infty$ on $\ER^d\times\ER^d$ (see  $e.g.$ \cite{cattiaux}, Theorem 2.9). In particular, $x\mapsto P_t(x,.)$ is a strong Feller semi-group. 
Assume also that the control system (associated with \eqref{sdestrat})
\begin{equation}\label{deterministic_cs}
\dot{x}^{(u)}=A_0(x^{(u)})+\sum_{j=1}^q A_q (x^{(u)}) u_j, 
\end{equation}
is $approximatively$-controllable:  
\begin{equation}\label{defcontrolable}
\begin{split}
&\textnormal{There exists  $T>0$ such that for every $\varepsilon>0$, $x_1,x_2\in\ER^d$, there exists $u\in L^2([0,T],\ER^d)$}\\
&\textnormal{such that $(x^{(u)}(t))$ solution to \eqref{deterministic_cs} satisfies $x(0)=x_1$ and $|x(T)-x_2|\le\varepsilon$. }
\end{split}
\end{equation}
Under Assumptions \eqref{hormand_assump} and \eqref{defcontrolable},  the diffusion has a unique invariant distribution $\nu$.
Actually, the controllability assumption combined with the Support Theorem implies that for every non-empty open set $O$, for every $x\!\in\ER^d$, $P_T(x,O)>0$. The semi-group $(P_t)$ is then irreducible. Owing to the strong Feller property, it follows classically that 
$(P_t)$ admits a unique invariant distribution (see  $e.g.$ \cite{daprato}, Proposition 4.1.1. and Theorem 4.2.1.).\\
Furthemore, $\nu$ is absolutely continuous with respect to the Lebesgue measure on $\ER^d$ and its topological support is $\ER^d$ (since 
for every open set $O$ of $\ER^d$, $\nu(O)=\int P_T(x,0)\nu(dx)>0$).  
Let us now consider the duplicated diffusion $(X_t,X_t^{(\rho)})$. Setting $Z_t^{(\rho)}=(X_t,X_t^{(\rho)})$ and using the preceding notations, \eqref{eds_duplicated}  can be written:
\begin{equation*}
dZ_t^{(\rho)}=\widetilde{A}_0(Z_t^{(\rho)})dt+\sum_{j=1}^{q}\widetilde{A}_j(Z_t^{(\rho)})dW_t^j+\sum_{j=1}^{q}\widetilde{A}_{d+j}(Z_t^{(\rho)})d\widetilde{W}_t^j
\end{equation*}
where $\widetilde{A}_0(z)=(A_0(x),A_0(y))^T$ (with $A_0(y)=\sum_{i=1}^d  \left[b_i(y)-\frac{1}{2}\sum_{l,j}\sigma_{l,j}(y)\partial_{y_j}\sigma_{i,l}(y)\right]\partial_{y_i}$ and
$z=(x,y)$),   $\widetilde{W}$ is a $d$-dimensional Brownian Motion independent of $W$ such that $W^{(\rho)}=\rho^* W+(I_q-\rho^*\rho)^{\frac{1}{2}} \widetilde{W}$ and for every $j\in\{1,\ldots,q\}$,
\begin{equation}\label{eq:ajs}
\widetilde{A}_j(z)=A_j(x)+ A_j^{(\rho)}(y)\quad\textnormal{and,}\quad
\widetilde{A}_{q+j}(z)= A_j^{((I_q-\rho^*\rho)^{\frac{1}{2}})}(y)
\end{equation}
where for a for a $q\times q$ matrix $B$, $A_j^{(B)}(y)=\sum_{i=1}^d(\sigma(y)B)_{i,j}\partial_{y_i}$. Then, the following property holds.
\begin{prop}\label{prop:hormanderdup} Let $\rho\in \mathbb{M}_{q,q}(\ER)$ such that $\rho^*\rho<I_q$. Assume that $b$ and $\sigma$ are ${\cal C}^\infty$ on $\ER^{d}$ with bounded derivatives. Assume \eqref{hormand_assump} and \eqref{defcontrolable}. Then, uniqueness holds for the invariant distribution $\nu^{(\rho)}$ of the duplicated diffusion $(X_t,X_t^{(\rho)})$. Furthermore, if $\nu^{(\rho)}$ exists, then $\nu^{(\rho)}$ has a density $p^{(\rho)}$ ($w.r.t.$ $\lambda_{2d}$) which is $a.s.$ positive.
\end{prop}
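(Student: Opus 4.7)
\noindent\textbf{Proof plan for Proposition~\ref{prop:hormanderdup}.}

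The strategy is to transfer the two key ingredients---the H\"ormander condition and the approximate controllability---from the original SDE to the duplicated system, and then deduce the existence of a smooth strictly positive transition density $q_t^{(\rho)}$ on $\R^{2d}$. Uniqueness and the structure of $\nu^{(\rho)}$ will follow by standard arguments.

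\medskip

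\noindent\emph{Step 1 (H\"ormander at each point for $(Z_t^{(\rho)})$).} The decisive observation is that, since $\rho^*\rho<I_q$, the matrix $T:=(I_q-\rho^*\rho)^{1/2}$ is invertible. Looking at \eqref{eq:ajs}, the family $\{\widetilde A_{q+j}, 1\le j\le q\}$ is made of vector fields of the form $(0,A_j^{(T)}(y))$, and since $T$ is invertible, linear combinations over $\R$ of these produce all vectors $C_j(z):=(0,A_j(y))$, $1\le j\le q$, at every $z=(x,y)$. Subtracting the appropriate linear combinations of these $C_j$'s from $\widetilde A_j=(A_j(x),\sum_k\rho_{kj}A_k(y))$ produces the vector fields $B_j(z):=(A_j(x),0)$. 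Thus the set of vector fields $\{B_j,C_j,1\le j\le q\}\cup\{\widetilde A_0\}$ is obtained from $\{\widetilde A_0,\widetilde A_1,\ldots,\widetilde A_{2q}\}$ by $\R$-linear operations, so the Lie algebras they generate coincide. One then checks by direct computation that $[B_j,C_k]\equiv 0$ and that Lie brackets among the $B_j$'s and with $\widetilde A_0$ are of block form $(*,0)$: precisely, an iterated bracket $[\widetilde A_0,B_{j_1},\ldots,B_{j_r}]$ equals $([A_0,A_{j_1},\ldots,A_{j_r}](x),0)$, and similarly for the $C_j$'s we obtain all $(0,[A_0,A_{k_1},\ldots,A_{k_s}](y))$. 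By the original H\"ormander assumption \eqref{hormand_assump}, the projections span $\R^d$ in each block, and hence the duplicated Lie algebra spans $\R^{2d}$ at every $z$.

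\medskip

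\noindent\emph{Step 2 (Controllability of the duplicated control system).} The Stratonovich control system attached to $(Z_t^{(\rho)})$ reads
\[
\dot x=A_0(x)+\sum_j u_j A_j(x),\qquad \dot y=A_0(y)+\sum_k\Bigl(\sum_j\rho_{kj}u_j+\sum_j T_{kj}v_j\Bigr)A_k(y).
\]
Setting $\tilde u:=\rho^*u+Tv$, the invertibility of $T$ allows us to recover $v=T^{-1}(\tilde u-\rho^*u)$ from any pair $(u,\tilde u)\in L^2([0,T],\R^q)^2$. Hence the duplicated control system is equivalent to two \emph{independent} copies of the original control system \eqref{deterministic_cs}, driven by $u$ and $\tilde u$ respectively. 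Applying \eqref{defcontrolable} to each component separately, we conclude that for the common time $T$, for every $(x_1,y_1),(x_2,y_2)\!\in\R^{2d}$ and every $\varepsilon>0$, there exist controls steering the duplicated system from $(x_1,y_1)$ to within $\varepsilon$ of $(x_2,y_2)$: the duplicated system is approximately controllable.

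\medskip

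\noindent\emph{Step 3 (Density, positivity, uniqueness).} By Step 1 and H\"ormander's theorem (see e.g.~\cite{cattiaux}, Theorem~2.9), for every $t>0$ the transition kernel $Q_t^{(\rho)}((x_1,x_2),\cdot)$ has a $\mathcal C^\infty$ density $q_t^{(\rho)}$ on $\R^{2d}$, in particular $(Q_t^{(\rho)})$ is strong Feller. By Step 2 and the Stroock--Varadhan support theorem, the topological support of $Q_T^{(\rho)}((x_1,x_2),\cdot)$ is all of $\R^{2d}$, so by continuity $q_T^{(\rho)}((x_1,x_2),\cdot)>0$ on $\R^{2d}$. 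Strong Feller plus this irreducibility classically implies at most one invariant distribution (cf.~\cite{daprato}, Theorem 4.2.1), proving uniqueness of $\nu^{(\rho)}$. Finally, if $\nu^{(\rho)}$ exists, the identity $\nu^{(\rho)}=\nu^{(\rho)}Q_T^{(\rho)}$ yields $\nu^{(\rho)}(dz)=p^{(\rho)}(z)\,dz$ with
\[
p^{(\rho)}(z)=\int_{\R^{2d}}q_T^{(\rho)}(z_0,z)\,\nu^{(\rho)}(dz_0)>0\quad\text{for every }z\in\R^{2d},
\]
which is the announced result. The main technical point is the Lie-bracket bookkeeping in Step~1; once the block structure is exploited, everything else is a standard transfer of strong Feller and support properties from the original SDE to the duplicated one.
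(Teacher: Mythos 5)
Your proof is correct and follows essentially the same route as the paper's: (i) use the invertibility of $T=(I_q-\rho^*\rho)^{1/2}$ to recover by constant-coefficient linear combinations the ``pure'' vector fields acting on the $x$- and $y$-blocks, then transfer the H\"ormander rank condition blockwise; (ii) observe that because $T$ is invertible, $(u,v)\mapsto(u,\rho^*u+Tv)$ surjects onto pairs of controls, so the duplicated control system is two independent copies of \eqref{deterministic_cs}; (iii) conclude by the standard strong Feller $+$ irreducibility argument for uniqueness and use the support theorem for strict positivity of the density. The paper states the block extraction pointwise (showing $\operatorname{Span}\{\partial_{y_i}\}\subset V$ first, then $A_j(x)\in V$), whereas you phrase it as a constant-coefficient change of basis of the generating family that preserves the Lie algebra and exhibits the block structure (including the key observation $[B_j,C_k]\equiv 0$); the two bookkeepings are equivalent.
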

\begin{proof} First, let us check the H\"ormander conditions for $(X_t,X_t^{(\rho)})_{t\ge0}$. Setting $S=(I_q-\rho^*\rho)^{\frac{1}{2}}$, standard computations yield
$$\forall j\in\{1,\ldots,q\},\quad\widetilde{A}_{q+j}(z)=\sum_{l=1}^q S_{l,j} A_l(y).$$
Since $S$ is invertible, we deduce that $\{A_l(y),l=1,\ldots,q\}$ belongs to ${\rm Span }\{\widetilde{A}_{q+j}(z), j=1,\ldots,q\}$.
Similarly, checking that for every $j\in\{1,\ldots,q\}$,
$$[\widetilde{A}_0(z),\widetilde{A}_{q+j}(z)]=[A_0(y),A_j^{(S)}(y)]=\sum_{l=1}^q S_{l,j}[A_0(y),A_l(y)]$$
one deduces from the invertibility of $S$ that $\{[A_0(y),A_l(y)], l=1,\ldots,q\}$ is included in ${\rm Span }\{[\widetilde{A}_0(z),\widetilde{A}_{q+j}(z)], j=1,\ldots,q\}$. Owing to \eqref{hormand_assump}, it follows that
${\rm Span}\{\partial_{y_1},\ldots,\partial_{y_d}\}$ is included in
$$
V={\rm Span}\left\{\widetilde{A}_1(z),\widetilde{A}_2(z),\ldots,\widetilde{A}_q(z),\;\textnormal{Lie Brackets of length$\le N$ of the $\widetilde{A}_j(z)$'s } , 0\le j\le q\right\}.
$$
Now, let us show that ${\rm Span}\{\partial_{x_1},\ldots,\partial_{x_d}\}$ is included in $V$. Since ${\rm Span}\{\partial_{y_1},\ldots,\partial_{y_d}\}$ is included in $V$, it is clear that for every $x\!\in\ER^d$, $A_j(x)=A_j^{(\rho)}(y)-\widetilde{A}_j(z)$ also belongs to $V$. Since 
$$
[\widetilde{A}_0(z),\widetilde{A}_j(z)]=[A_0(x),A_j(x)]+[A_0(y),A_j^{(\rho)}(y)],
$$
$[A_0(x),A_j(x)]$ has the same property. Using again \eqref{hormand_assump}, we deduce that ${\rm Span}\{\partial_{x_1},\ldots,\partial_{x_d}\}$ is included in $V$  and thus that $\dim (V)=2d$.
As a consequence, for every $z\in \ER^{d}\times\ER^{d} $ and $t>0$, $Q_t^{(\rho)}(z,.)$ admits a density $q_t(z,.)$ $w.r.t.$ $\lambda_{2d}$ such that 
$(z,z')\mapsto q_t(z,z')$ is ${\cal C}^\infty$ on $\ER^{d}\times\ER^{d} \times\ER^{d}\times\ER^{d} $.\\
In order to obtain uniqueness for the invariant distribution, it remains to show that there exists $T>0$ such that for every $z\in\ER^{d}\times\ER^{d} $, for every non-empty open set $O$ of $\ER^{d}\times\ER^{d} $, $Q_T(z,O)>0$. Owing to \eqref{defcontrolable}, it is clear that 
for every $z_1=(x_1,y_1)$ and $z_2=(x_2,y_2)$, for every $\varepsilon>0$, there exist $u$ and $\widetilde{u} \in L^2([0,T],\ER^d)$ such that $z(t)=(x^{(u)}(t),x^{(\widetilde{u})}(t))$, where $x^{(u)}$ and $x^{(\widetilde{u})}$ are solutions to \eqref{deterministic_cs} starting from $x_1$ and $y_1$, satisfies $|z(T)-z_2|\le \varepsilon$. Furthermore, since $S$ is invertible, we can assume that $\widetilde{u}=\rho u+ S \omega$ with $\omega\in L^2([0,T],\ER^d)$. Then, the support Theorem can be applied to obtain that for every $z_1, z_2, \varepsilon$ $Q_T(z_1,B(z_2,\frac{\varepsilon}{2})>0$  and thus to conclude that for every $z\in\ER^{d}\times\ER^{d} $ and every non-empty open set $O$, $Q_T(z,O)>0$. 
\end{proof}

\section{Additional proofs about the two-dimensional counterexample}
\textbf{Proof of \eqref{rayontend}:} For the sake of completeness, we show that $r_t\rightarrow 1$ $a.s.$
as soon as $r_0>0$. First, note that uniqueness holds for the solution of the SDE~(\ref{eq:countex1}) since the coefficients are Lipschitz continuous.
In particular, $(r_t^1)$ defined $a.s.$ by $r_t^1=1$ for every $t\ge0$ is the unique solution starting from $r_0=1$.  Owing to the strong Markov property, this implies that if  $\tau^1:=\inf\{t\ge0, r_t=1\}$, then $r_t=1$  on  $\{\tau \le t\}$. The same property   holds
at $0$. We deduce that $(r_t)_{t\ge0}$ lives in $[1,+\infty)$ if $r_0>1$ and  in $[0,1]$ if $r_0\in[0,1]$.
Moreover, if $r_0>1$, we have  $d(r_t-1)=-(r_t-1)(dt+\vartheta dW_t)$ so that 
$$
r_t-1=e^{-(1+\frac{\vartheta^2}{2})t+\vartheta W_t}.
$$
It follows that $\lim_{t\rightarrow+\infty} r_t=1$ since $\lim_{t\rightarrow+\infty} \frac{W_t}{t}=0$ $a.s.$.
Now, if $r_0\in[0,1]$, we have 
$$
d r_t= r_t(1-r_t)(dt+\vartheta dW_t).
$$ 
Thus, $(r_t)$ is a $[0,1]$-valued submartingale. In particular, $r_t$ converges $a.s.$ to a $[0,1]$-valued random variable $r_\infty$. Since 
$$
\forall t\ge 0,\qquad \ES[r_t]=r_0+\ES\Big(\int_0^t r_s(1-r_s)ds\Big),
$$ 
it follows that $ \ES[\int_0^{+\infty} r_s(1-r_s)ds]$ which in turn implies that $\int_0^{+\infty} r_s(1-r_s)ds<+\infty$ $a.s.$ 
As a consequence $\liminf_{t\rightarrow+\infty} r_t(1-r_t)=0$ $a.s.$. The process $(r_t)$ being $a.s.$ convergent to $r_\infty$, it follows that $r_\infty\in\{0,1\}$ $a.s.$. It remains to prove that $\PE(r_\infty=0)=0$.  Denote by $p$ the scale function of $(r_t)$ null at $r=1/2$. For every 
$r\in(0,1)$,
$$p(r)=\int_{\frac{1}{2}}^re^{-\int_{\frac{1}{2}}^{\xi}\frac{2}{\vartheta^2u(1-u)}du}d\xi=\int_{\frac{1}{2}}^r\left(\frac{1-\xi}{\xi}\right)^{\frac{2}{\vartheta^2}}d\xi.$$
As a consequence, if $\vartheta\in(0,\sqrt{2}]$, $\lim_{r\rightarrow+\infty} p(r)=+\infty$. This means that $0$ is a repulsive point 
and that, as a consequence (see $e.g.$ \cite{KATA}, Lemma 6.1 p. 228), 
$$
\forall b\in(0,1)\qquad \PE(\lim_{a\to 0^+}\tau_a<\tau_b):=\lim_{a\rightarrow0^{+}}\PE(\tau_a<\tau_b)=0
$$
where $\tau_a= \inf\{t\ge 0\,|\, r_t = a\}$, $y\!\in [0,1]$. We deduce that $\PE(r_\infty=0)=0$. This completes the proof.$\qquad \Box$

\bigskip
\noindent \textbf{Proof of \eqref{IMC}:} We want to prove that 
$\mu$ is invariant for $(X_t^x,X_t^{x'})$ if and only if $\mu$ can be represented by \eqref{IMC}. First, since the unique invariant distribution of $(X_t^x)$ is ${\lambda}_{S_1}$, it is clear that $\mu={\cal L}(e^{i\Theta_0},e^{i(\Theta_0+V_0)})$
where $\Theta_0$ has uniform distribution on $[0,2\pi]$ and $V_0$ is a random variable with values in $[0,2\pi)$.
One can check that if $V_0$ is independent of $\Theta_0$, $\mu$ is invariant.
Thus, it remains to prove that it is a necessary condition or equivalently that 
$K(\theta,dv):={\cal L}(e^{iV_0}|e^{i\Theta_0}=e^{i\theta})$ does not depend on $\theta$. Denote by
 $(e^{i\Theta_t},e^{i(\Theta_t+V_t)})$ the (stationary) duplicated diffusion starting from $(e^{i\Theta_0},e^{i(\Theta_0+V_0)})$.
Since $\mu$ is invariant, we have for every $t\ge0$ 
$$
{\cal L}(e^{iV_t}|e^{i\Theta_t}=e^{i\theta})=K(\theta,dv)$$
but thanks to the construction, for every $t\ge0$, $\Theta_t=\Theta_0+W_t$ and $V_t=V_0$ (the angular difference between the two coordinates does not change) so that
$$
{\cal L}(e^{iV_t}|e^{i\Theta_t}=e^{i\theta})=\int K(\theta',dv)\rho_t(\theta,d\theta')$$
where $\rho_t(\theta,d\theta')={\cal L}(e^{i(\theta+W_t)})$. But $\rho_t(\theta,d\theta')$ converges weakly to ${\lambda}_{S_1}$
when $t\rightarrow+\infty$. From the two previous equations it follows that $K(\theta,dv)$ does not depend on $\theta$ since
$\forall \theta\ge0$, $K(\theta,dv)=\int K(\theta',dv){\lambda}_{S_1}(d\theta')$.\hfill$\cqfd$

\small

\medskip
\noindent {\sc Acknowledgement:} We thank an anonymous referee and an associate editor for their suggestions which  helped improving the paper.

\bibliographystyle{alpha}

\bibliography{bibli}
\end{document}